\title{The incompressible Navier-Stokes equations on non-compact manifolds}
\author{ V. Pierfelice }
\address{Universit\'e d'Orl\'eans \& CNRS,
F\'ed\'eration Denis Poisson (FR 2964) \& Laboratoire MAPMO (UMR 6628),
B\^atiment de math\'ematiques -- Route de Chartres,
B.P. 6759 -- 45067 Orl\'eans cedex 2 -- France}
\email{vittoria.pierfelice@univ-orleans.fr}
\date{\today}
\subjclass[2000]{35R01, 35Q30, 35L05, 58D25, 58J35, 43A85, 47J35, 22E30.}
\keywords{Navier-Stokes equations; non-compact Riemannian manifolds; negative curvature; hyperbolic space; Bochner Laplacian; dispersive estimates; smoothing estimates; global well-posedness.}
\newtheorem{lemma}{Lemma}[section]
\newtheorem{theorem}[lemma]{Theorem}
\newtheorem{prop}[lemma]{Proposition}
\newtheorem{corollary}[lemma]{Corollary}
\newtheorem{remark}[lemma]{Remark}
\newtheorem{definition}[lemma]{Definition}
\newcommand{\grad}{\operatorname{grad}}
\newcommand{\Tr}{\operatorname{Tr}}
\newcommand{\Ric}{\operatorname{Ric}}
\newcommand{\dive}{\operatorname{div}}
\newcommand{\Riem}{\operatorname{Riem}}
\newcommand{\Image}{\operatorname{Image}}
\begin{document}

\begin{abstract}
We shall prove dispersive and smoothing estimates for Bochner type  laplacians on some non-compact Riemannian manifolds with negative  Ricci  curvature, in particular on hyperbolic spaces.

These estimates will be used to prove Fujita-Kato type theorems for the incompressible Navier-Stokes equations.

We shall also discuss the uniqueness of Leray weak solutions in the  two dimensional case.

\end{abstract}
\maketitle
\section{Introduction}

This work deals with  the equations describing  the motion of an incompressible fluid with viscosity  in a non-compact space $M$, more precisely,  
we shall  study the incompressible  Navier-Stokes  system on a non-compact Riemannian manifold.
Let us first  recall some classical results for the incompressible Navier-Stokes equations in the  flat case $\mathbb{R}^n$. 
 In this framework, 
the unknowns are  the velocity  $u: \mathbb{R}^{+}_{t}\times \mathbb{R}^n_{x} \rightarrow \mathbb{R}^n_{x}$ of the fluid,  a time dependent divergence free vector field on $\mathbb{R}^n$ and its  pressure $ p: \mathbb{R}^{+}_{t} \times \mathbb{R}^n_{x} \rightarrow \mathbb{R}$.
The  incompressible Navier-Stokes system  takes the following form 
\begin{equation}\label{system1}
 \left \{  \begin{array}{ll} \partial_{t} u +(u \cdot \nabla) u + \grad p =  \nu \Delta u, \\  \dive u = 0. \end{array} \right. 
\end{equation}
The velocity is divergence free because of the incompressibility assumption and    $\nu$ (which is the inverse of the Reynolds number when the system is written in non-dimensional coordinates) is positive since the fluid is viscous. Moreover, in cartesian coordinates the definitions of the  operators arising  in the previous system are: for all $ j \in \{1, \dots, n\} $ 
\begin{align*}
&  ((u\cdot \nabla) u)^{j} = (\nabla_{u} u)^j=  u^i \partial_{i}u^j, 
 &   div \, u = \partial_{i}u^i, \quad 
(\grad p)^j= \partial_{j}p, \quad 
& {(\Delta u)^j= \partial_{ii} u^{j}.} 
\end{align*}
where we sum over $i$.
We notice that, in cartesian coordinates, the vectorial laplacian is made by the usual (scalar) Laplacian acting on each component of vector fields $u = (u^1, \dots, u^n)$.

We  add to the system \eqref{system1} an initial condition on the velocity   $u_{|t=0}= u_{0},$ with  the initial data $u_{0}$ divergence free ($\dive \, u_{0}=0$).
The notion of $C^2$ solution (i.e. classical solution) is not efficient here. It has been pointed out by C. Ossen (see \cite{Oseen28} and \cite{Oseen29}) that another concept of solution must be used.
There are many notions of solutions that are appropriate for this system. The most famous ones are the Leray weak solutions that are based on the  energy dissipation (see \cite{Leray26}) and the  Kato type solutions that are based on the  scaling of the equation (see \cite{kato}).
 One way of studying the initial value problem (NSE) is via the weak solutions introduced
by Leray. Indeed, Leray and Hopf showed the existence of a
global weak solution of the Navier-Stokes equations corresponding to initial data
in $L^2(\mathbb{R}^n)$ (see \cite{Leray26}, \cite{hopf}).  Lemari\'e extended this construction and obtained the existence of uniformly
locally square integrable weak solutions. Questions about the uniqueness
and regularity of these solutions are completely clear only when $n=2$. In particular, in dimension $2$ the energy inequality is verified and Leray weak solutions are unique for any initial data $u_0 \in L^{2}(\mathbb{R}^2)$ and global  propagation of higher regularity holds.
 When $n\geq 3$, these questions have not been answered yet; the case of dimension 3  is one of millenium problems. But important
contributions in understanding partial regularity and conditional uniqueness
of weak solutions should be mentioned  (see e.g.  \cite{CKN}, \cite{Lin}, \cite{LM}, \cite{FurLemTer},  \cite{EusSESve}). 
 Because of the uniqueness problem with the weak solution in dimension $n \geq3,$ another approach was introduced by Kato and Fujita (1961) studying  stronger solutions (or mild solutions) (see \cite{FujitaKato}).
 To define them, they use the Hodge decomposition in $\mathbb{R}^n$, i.e. for every $L^2$ vector field, one has the unique orthogonal decomposition
 \begin{equation}
 \label{dechodge} u= v + \grad  q, \quad \dive  v= 0, \quad v= \mathbb{P} u,
 \end{equation}
where $\mathbb{P}$ is the Leray projector on divergence free vector  fields.
 Formally if  $u$ solves the Navier-Stokes Cauchy problem, then applying the projector $\mathbb{P}$ to the equation, the pressure term dissapear and one gets the following Cauchy problem for this semi-linear parabolic system
\begin{equation}
\label{katoform}
 \left \{ \begin{array}{ll}  \partial_{t} u  - \Delta u= - \mathbb{P}(u \cdot \nabla u), \\
  u_{|t=0}= u_{0}. \end{array} \right.
\end{equation}
 By using the heat semi-group and the Duhamel formula, the PDE is reformulated as a fixed point problem in a suitable Banach space $X_T$
$$u(t, \cdot)= e^{t \Delta } u_{0} - \int_{0}^t e^{(t-s) \Delta} \mathbb{P}(u \cdot \nabla u) \, ds.$$
Then the strong solutions are solutions of this fixed point problem. This approach of Kato allows us to get the well-posedness of the Cauchy problem to the Navier-Stokes equations locally in time and globally for small initial data in various subcritical or critical spaces.
 The critical  spaces are the natural ones to solve the equation by the fixed point method since they are  invariant under the scaling of the equation: if $u$ is the solution, then $u^\lambda = \lambda u(\lambda^2 t, \lambda x)$ is also a solution. 
 The main results  for the critical spaces are the following: $\dot{H}^{- 1 + {n \over 2}} \subset L^n \subset \dot B^{-1 + {n \over p}}_{p, \infty} \subset BMO^{-1},$ obtained by   \cite{FujitaKato},  \cite{Giga-Miyakawa},  \cite{kato}, \cite{Weissler}, \cite{CannoneMeyer},  \cite{Planchon},  \cite{KochTataru}, \cite{Auscher}.
  The largest critical spaces is $\dot B^{-1}_{\infty, \infty}$, but the Cauchy problem is showed to be ill-posed by \cite{Bourgain-Pavlovic}. 
  Moreover, there are  global well-posedness results for some classes of large data in all the above spaces, that uses the structure of the non-linear term (see for examples \cite{Ladyzhenskaya}, \cite{Abidi}, \cite{CheminGallagher}).

  In this paper, we shall mainly  be interested in the Kato approach in the case of the Navier-Stokes equations on non-compact riemannian manifolds.
   The plan of the paper is as follows. In the next section,  we give a more precise description of the manifolds that we shall consider and we  recall some definitions and properties  of Riemannian geometry  and 
    functional analysis on these  non-compact manifolds. In section \ref{sectionnavierstokes}, we recall the natural
     way to write the Navier-Stokes equation on a non-flat manifold that was pointed out by \cite{EbinMarsden}, \cite{Taylor}. Note that the issue  is
      that we need a Laplacian acting on vector fields  and that there is no canonical object of this type on a manifold
       (there are many possibilities such as the Hodge Laplacian, the Bochner Laplacian). We shall also explain a  good
        way to write the system under the form \eqref{katoform} on our manifolds. Note that we cannot use directly the decomposition
         \eqref{dechodge}  that does not hold in general on a manifold when non-trivial $L^2$ harmonic 1-forms are present.
          This phenomenon is at the origin of the non-uniqueness phenomenon on the hyperbolic plane pointed out in \cite{Czubak}, \cite{Khesin} and produce non-unique $\mathcal{C}^\infty$
          solutions. In section \ref{sectiondispersive}, we prove dispersive  and smoothing  estimates
   for the heat  and Stokes equations associated to the Bochner Laplacian. The negative curvature yields better large time decay than in the Euclidian case.
    Our set of estimates for the Stokes problem  is more complete when the Ricci curvature of the manifold is constant (thus in particular
     on the hyperbolic spaces and also on more general symmetric spaces of non-compact type). This comes from the fact that in this case the study of the Stokes problem can
      be reduced to the study of the vectorial heat equation.
    These are the crucial estimates
    needed in order to get Fujita Kato type theorems. In section \ref{sectionKato}, we prove well-posedness
     results for the Navier-Stokes equations in an $L^n$ framework. Finally in section \ref{sectionLeray}, 
     we discuss how by eliminating the pressure from the Navier-Stokes system our approach  can be
      used to recover the uniqueness of Leray weak solutions on two-dimensional non-compact manifolds.  This gives
       another approach to the recent result \cite{CzubakChan}.

\section{Baby Geometry $ \clubsuit$ }

We shall recall in this section the main objects of Riemannian geometry and their properties that we need. For more details,  we refer to  Riemannian geometry  textbooks \cite{GHL},\cite{Jost} for example.
\subsection{Connections}
We consider $(M,g)$ a Riemannian manifold. We shall denote by $\nabla$ the Levi-Civita connection:
\begin{align*}
 \Gamma(TM) \times \Gamma(TM) &  \rightarrow \Gamma(TM)  \\
  (X,Y) \mapsto \nabla_X Y
  \end{align*}
  where we denote by  $\Gamma(TM)$  the set of vector fields on $M$.
   The crucial property of this connection is its compatibility with the metric: for any vector fields $X, \, Y, \, Z$, we have
   \begin{equation}
   \label{levi}
     X\cdot g(Y,Z)= g( \nabla_{X} Y, Z) +  g( Y, \nabla_{X} Z).
     \end{equation}

  For $X\in  \Gamma(TM)$, we can extend $\nabla_X$ to arbitrary $(p,q)$ tensors   by requiring that
  \begin{enumerate}
\item[i)] $\nabla_X (c(S)) = c(\nabla_XS)$
 for any contraction $c$,
 \item[ii)]$ \nabla_X( S\otimes T) = \nabla_X S \otimes T +  S\otimes \nabla_X T$
\end{enumerate}  
with the convention that for $f$ a function $\nabla_X f= X \cdot f.$
In particular, we get that for  $S\in \Gamma(\bigotimes^p(TM) \bigotimes^q(T^* M))$
$$( \nabla_X S)(X_1, \cdots X_q)= \nabla_X \big(  S(X_1, \cdots X_q) \big) -S(\nabla_X X_1, \cdots,  X_q)- \cdots -
 S(X_1, \cdots , \nabla_X X_q).$$

We define the covariant derivatives $\nabla$ on tensor field  $S\in \Gamma(\bigotimes^p(TM) \bigotimes^q(T^* M))$ by
$$ \nabla S (X, X_1, \cdots X_q) =( \nabla_X S)(X_1, \cdots X_q),  $$
thus $\nabla S \in  \Gamma(\bigotimes^p(TM) \bigotimes^{q+1}T^* M)) $.

\subsection{Curvatures}
 We shall use the following classical definitions for the various curvature tensors.
 The curvature tensor is defined by 
 \begin{equation}\label{tensorecurv}
  R(X, Y)Z= - \nabla_X  (\nabla_Y Z) + \nabla_Y
( \nabla_X Z) + \nabla_{[X,Y]} Z, \quad \forall X, \, Y, \, Z \in \Gamma(TM).
\end{equation}
The Riemann curvature tensor is given by 
\begin{equation}\label{tensRiem}
 \Riem(X,Y,Z,T)= g(R(X,Y) Z, T), \quad \forall X, \, Y, \, Z, \, T \in \Gamma(TM)\end{equation}
 and the Ricci curvature tensor is defined by 
 \begin{equation}\label{tensRic} 
 \mbox{Ric} (X,Y)= \sum_{i=1}^n  \Riem(X,e_i,Y,e_i), \end{equation}
  for an orthonormal basis $(e_1, \cdots e_n)$.
   The notion of sectional curvature will be also used. For every $(X,Y) \in (T_{x} M)^2$, we define the sectional curvature of  the plane $(X,Y)$
    as
   $$ \kappa (X,Y)=  {R(X,Y, X, Y) \over  g(X,X) g(Y,Y) - g(X,Y)^2}.$$
   
  \subsection{Metric on tensors}
  Let us recall the musical applications: for a 1-form $\omega$, we associate the vector field $\omega^\sharp$ defined by
  $$ g( \omega^\sharp, Y) = \omega(Y), \quad \forall Y \in \Gamma(TM)$$
  and for a vector field $X$, we associate the 1-form $ X^\flat$ defined by 
  $$ X^\flat (Y)= g(X, Y),\quad \forall Y \in \Gamma(TM).$$
  
  The Riemmanian gradient of a function is then defined as 
  $$ \grad p =( d p )^\sharp.$$ 
   
  More generally, for tensors $T \in  \Gamma(\otimes^p TM \otimes^q T^* M), $ we  have
\begin{align*}
& T^\sharp= C_{1}^2( g^{-1} \otimes T) \in  \Gamma(\otimes^{p+1} TM \otimes^{q-1} T^* M),   \\
&  T^\flat= C_{2}^1( g \otimes T) \in  \Gamma(\otimes^{p-1} TM \otimes^{q+ 1} T^* M), \\
& \dive T= C^1_1 \nabla T \in  \Gamma(\otimes^{p-1} TM \otimes^{q} T^* M)
\end{align*}
where $C^i_j$ stands for the contraction of the $i$ and $j$ indices for tensors.

We can define a metric on 1-forms by setting
$$ g(\omega, \eta):= g( \omega^\sharp, \eta^\sharp), \quad  \forall \omega, \, \eta \in \Gamma(T^*M).$$
We can then extend the definition to general tensors fields in $  \Gamma(\otimes^p TM \otimes^q T^* M), $
 by setting
 $$ g:= (\otimes^p g) \otimes (\otimes^q g).$$
 In local coordinates $(x^1, \dots, x^n)$, for  $T, S \in  \Gamma(\otimes^p TM \otimes^q T^* M), $ i.e. 
 $$T = T_{j_1 \dots j_q}^{i_1 \dots i_p} \,\, \partial_{x^{i_1}} \otimes  \dots  \otimes \partial_{x^{i_p}}\otimes dx^{j_1} \otimes \dots \otimes dx^{j_q},$$  
  $$S = S_{j'_1 \dots j'_q}^{i'_1 \dots i'_p} \,\, \partial_{x^{i'_1}} \otimes  \dots  \otimes \partial_{x^{i'_p}}\otimes dx^{j'_1} \otimes \dots \otimes dx^{j'_q},$$ 
   this yields the expression  $$g( T, S) = g_{i_1 i'_1} \cdots g_{i_p i'_p} g^{j_1 j'_1} \cdots g^{j_q j'_q} T_{j_1 \dots j_q}^{i_1 \dots i_p} S_{j'_1 \dots j'_q}^{i'_1 \dots i'_p}.$$
   We shall also use for tensors the notation 
 \begin{equation}\label{norm2}
 | T |^2 =g(T,T).
  \end{equation}
  We define the Sobolev norms of tensors
   {$T \in \Gamma( \otimes^p TM \otimes^q T^* M)$} by
   {$$ \| T\|_{W^{m, p}}= \Big( \sum_{0 \leq  k \leq m } \int_{M} g(\nabla^k T, \nabla^k T)^{p \over 2}\, dvol \Big)^{1\over p}, \quad
  W^{m,2}= H^m.$$}

  \subsection{Normal coordinates} To compute intrinsic objects in local coordinates, it will be very often convenient to use
   normal coordinates. More precisely, we shall use that in the vicinity of any point $m_0$, there exists a coordinate system
     $(x^1, \cdots, x^n)$ such that at the point $m_0$ the coordinates of the Riemannian metric and the Christoffel coefficients verify
     \begin{equation}
     \label{normal}
       g_{ij}(m_0) = \delta_{ij}, \quad \quad    \Gamma_{ij}^k(m_0)= 0.
     \end{equation}
     
     \subsection{Some useful geometric formulas}
     \begin{lemma}[Kato inequality]
     For any vector field $u$
     \begin{equation}\label{katoin}
      | \nabla |u| | \leq  | \nabla u|.
      \end{equation}
     
     \end{lemma}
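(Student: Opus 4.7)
The plan is to reduce the Kato inequality to a pointwise Cauchy--Schwarz inequality, using the compatibility of the Levi-Civita connection with the metric. Work pointwise at a point $m_{0}$ where $u(m_{0}) \neq 0$, since the issue at zeros of $u$ is separate and will be handled by a regularization at the end.

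First, I would express $\nabla |u|$ in terms of $\nabla u$. Since $|u| = \sqrt{g(u,u)}$, the compatibility identity \eqref{levi} gives, for any vector field $X$,
\begin{equation*}
X\cdot g(u,u) = 2\, g(\nabla_{X} u, u),
\end{equation*}
so dividing by $2|u|$ yields
\begin{equation*}
X\cdot |u| \;=\; \frac{g(\nabla_{X} u, u)}{|u|}.
\end{equation*}
This is just the statement that the $1$-form $d|u|$, which corresponds via the musical isomorphism to $\nabla |u|$, evaluates on $X$ as above.

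Next, I would apply the Cauchy--Schwarz inequality for the fibrewise inner product $g$ to the right-hand side:
\begin{equation*}
\bigl| g(\nabla_{X} u, u) \bigr| \;\leq\; |\nabla_{X} u|\cdot |u|,
\end{equation*}
hence $|X\cdot |u|| \leq |\nabla_{X} u|$. Choosing a local orthonormal frame $(e_{1},\dots,e_{n})$ around $m_{0}$ (for instance via normal coordinates \eqref{normal}), one has by \eqref{norm2}
\begin{equation*}
|\nabla |u| |^{2} \;=\; \sum_{i=1}^{n}\bigl(e_{i}\cdot |u|\bigr)^{2}, \qquad |\nabla u|^{2} \;=\; \sum_{i=1}^{n}|\nabla_{e_{i}} u|^{2},
\end{equation*}
so summing the previous inequality squared gives $|\nabla|u||^{2}\leq |\nabla u|^{2}$ at $m_{0}$.

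The only real obstacle is the set $\{u=0\}$, where $|u|$ is generally not differentiable (think of the Euclidean $|u|=\sqrt{(u^{1})^{2}+\cdots+(u^{n})^{2}}$ near the origin). The standard remedy is to regularize: set $|u|_{\varepsilon}:=\sqrt{g(u,u)+\varepsilon^{2}}$, which is smooth everywhere. The identical computation yields
\begin{equation*}
X\cdot |u|_{\varepsilon} \;=\; \frac{g(\nabla_{X}u,u)}{|u|_{\varepsilon}}, \qquad \bigl|X\cdot |u|_{\varepsilon}\bigr| \;\leq\; \frac{|u|}{|u|_{\varepsilon}}|\nabla_{X}u|\;\leq\; |\nabla_{X}u|,
\end{equation*}
and one passes to the limit $\varepsilon\to 0^{+}$ (in $\mathcal{D}'$, or simply by monotone/dominated convergence on test integrals) to recover the inequality globally in the distributional sense. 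This regularization step is the only point requiring care; everything else is a direct consequence of \eqref{levi} and Cauchy--Schwarz.
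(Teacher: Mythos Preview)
Your proof is correct and follows essentially the same route as the paper: both use the metric compatibility \eqref{levi} to write $\partial_i|u|^2 = 2g(\nabla_{e_i}u,u)$ in a normal frame and then apply Cauchy--Schwarz. The paper works with $|u|^2$ and obtains $|(\nabla|u|)|u||^2 \leq |\nabla u|^2|u|^2$ before dividing, while you differentiate $|u|$ directly and add a regularization $|u|_\varepsilon$ to handle the zero set; this is only a cosmetic difference, though your treatment of the points where $u=0$ is more explicit than the paper's.
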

     \begin{proof}
      We prove the inequality at each point $m$ by using a normal coordinate system centered at $m$.  Let us set  $e_{i}= \partial/\partial x^i= \partial_{i}$, then  $(e_{1}, \cdots, e_{n})$ is an orthonormal basis at $m$. By using \eqref{normal} and by Cauchy-Schwartz inequality we have on the one hand
$$
      |  \nabla (|u|^2) |_{/m} ^2=   \sum_i( \partial_i |u|^2)^2_{/m}= 4 \sum_i
               g( \nabla_{e_i} u , u)^2_{/m} \leq 4 | \nabla u|^2_{/m}   |  u|^2_{/m},
         $$
         on the other hand
        $$\nabla (|u|^2) |_{/m} ^2= 4  |( \nabla (|u|) |u|) |^2_{/m},$$
     so we obtain 
     $$  |( \nabla (|u|) |u|) |^2_{/m}\leq  | \nabla u|^2_{/m}   | u|^2_{/m}$$
     which yields the result.
          \end{proof}
Let us denote by $\Delta_{g}$ the Laplace Beltrami  operator and 
  by     $\overrightarrow{\Delta}$  the Bochner Laplacian,
\begin{equation}\label{bochn}
 \overrightarrow{\Delta} u  = - \nabla^* \nabla u = \Tr_{g}(\nabla^2 u)
 \end{equation}
 where 
 $\nabla^*$ is the formal adjoint of $\nabla$ for the $L^2$ scalar product and
$$  \Tr_{g}(\nabla^2 u)=g^{ij}\nabla^2 u(e_{i}, e_{j}) $$
in local coordinates.

    \begin{lemma}\label{bochner}
      For any vector field $u$, we have  Bochner's identity
    \begin{equation}\label{Bochner}
    {1 \over 2 } \Delta_{g} \big( g(u,u))=  g(\overrightarrow \Delta u, u) +  g( \nabla u, \nabla u).
    \end{equation}
      
     \end{lemma}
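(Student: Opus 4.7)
The plan is to verify \eqref{Bochner} pointwise at an arbitrary point $m_{0}\in M$, exploiting the normal coordinate system \eqref{normal} so that covariant and partial derivatives coincide at $m_{0}$. Let $f:=g(u,u)\in C^{\infty}(M)$. Since \eqref{Bochner} is a tensor identity, checking it at one point in one well-chosen frame suffices.

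First I would differentiate $f$ once using the Levi-Civita compatibility property \eqref{levi}: for $e_{i}:=\partial/\partial x^{i}$ one has
\begin{equation*}
 e_{i}\cdot f = e_{i}\cdot g(u,u) = 2\, g(\nabla_{e_{i}} u,\, u).
\end{equation*}
Applying \eqref{levi} a second time to the vector field $\nabla_{e_{i}} u$ yields
\begin{equation*}
 e_{j}\cdot( e_{i}\cdot f) = 2\, g(\nabla_{e_{j}}\nabla_{e_{i}} u,\, u) + 2\, g(\nabla_{e_{i}} u,\, \nabla_{e_{j}} u).
\end{equation*}

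Next I would evaluate everything at $m_{0}$. Because $\Gamma_{ij}^{k}(m_{0})=0$, one has $\nabla_{e_{j}} e_{i}(m_{0})=0$, so the definition of the second covariant derivative gives
\begin{equation*}
 \nabla^{2} u(e_{j},e_{i})\big|_{m_{0}} = \nabla_{e_{j}}\nabla_{e_{i}} u\big|_{m_{0}} - \nabla_{\nabla_{e_{j}} e_{i}} u\big|_{m_{0}} = \nabla_{e_{j}}\nabla_{e_{i}} u\big|_{m_{0}}.
\end{equation*}
Moreover, since $g_{ij}(m_{0})=\delta_{ij}$, the frame $(e_{1},\dots,e_{n})$ is orthonormal at $m_{0}$, and for any smooth function $h$ one has $\Delta_{g} h(m_{0}) = \sum_{i} e_{i}\cdot( e_{i}\cdot h)(m_{0})$ (the Christoffel correction in $\Delta_{g}=g^{ij}(\partial_{i}\partial_{j}-\Gamma_{ij}^{k}\partial_{k})$ disappears).

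Putting these together and summing over $i$ at $m_{0}$:
\begin{equation*}
 \tfrac{1}{2}\Delta_{g} f\big|_{m_{0}} = \sum_{i} g\bigl(\nabla^{2} u(e_{i},e_{i}),\, u\bigr) + \sum_{i} g(\nabla_{e_{i}} u,\, \nabla_{e_{i}} u).
\end{equation*}
Recognising the first sum as $g(\Tr_{g}\nabla^{2} u,\,u)=g(\overrightarrow{\Delta} u,\,u)$ by \eqref{bochn}, and the second sum as $g(\nabla u,\nabla u)$ via the definition of the metric on tensors through the orthonormality of $(e_{i})$, gives \eqref{Bochner} at $m_{0}$; since $m_{0}$ was arbitrary the identity holds on $M$.

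The only subtle point is keeping straight the distinction between $\nabla_{X}\nabla_{Y} u$ and $\nabla^{2} u(X,Y)$, which differ by $\nabla_{\nabla_{X} Y} u$; this is precisely why the normal coordinate system \eqref{normal} is the key tool — it annihilates exactly this correction term and turns the computation into the Euclidean one. Apart from that, the proof is a direct application of metric compatibility \eqref{levi} applied twice.
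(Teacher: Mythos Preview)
Your proof is correct and follows essentially the same approach as the paper: compute at an arbitrary point in normal coordinates, apply the metric compatibility \eqref{levi} twice, and use that $\Gamma_{ij}^{k}(m_{0})=0$ to identify $\nabla_{e_{i}}\nabla_{e_{i}}u$ with $\overrightarrow{\Delta}u$ and $\partial_{i}^{2}$ with $\Delta_{g}$. The only cosmetic difference is that the paper starts from $g(\overrightarrow{\Delta}u,u)$ and works toward $\tfrac{1}{2}\Delta_{g}|u|^{2}$, whereas you start from $\tfrac{1}{2}\Delta_{g}|u|^{2}$ and arrive at the right-hand side.
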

     Note that in the right hand side the scalar product $ g( \nabla u, \nabla u)$ is  the scalar product on $(1,1)$ tensors
     defined above.
     
     \begin{proof}
     To prove the formula, we shall  compute each term in the formula in normal coordinates at $m$ for any point $m$.
     Let us set  $e_{i}= \partial/\partial x^i= \partial_{i}$, then $(e_{1}, \cdots, e_{n})$ is an orthonormal basis at $m$.
          By using the properties  \eqref{normal} of the normal coordinates at $m$, we have
     $$(\overrightarrow \Delta u) _{/m} = ( \nabla^2 _{e_{i}, e_{i}} u)_{/m}=  \big(\nabla_{e_{i}} \big( \nabla_{e_{i}} u ) \big)_{/m}$$
      Therefore,  we obtain by using \eqref{levi}
      $$ g( \overrightarrow \Delta u, u)_{/m} =  \Big(  \partial_{i} \big( g(\nabla_{e_{i}} u, u ) \big)  - g( \nabla_{e_{i}}u, \nabla_{e_{i}} u)
       \Big)_{/m}$$
        and hence
        $$  \Big(g( \overrightarrow \Delta u, u) +  g( \nabla u, \nabla u) \Big)_{/m}=    \big(\partial_{i} \big( g(\nabla_{e_{i}} u, u ) \big) \big)_{/m}$$
        by using again \eqref{normal}. To conclude, 
        we observe  by using again \eqref{levi} that
        $$  g(\nabla_{e_{i}} u, u ) = {1 \over 2} \partial_{i} \big( g(u, u))$$ and then that
        $$    \big(\partial_{i} \big( g(\nabla_{e_{i}} u, u ) \big)_{/m}  =  {1 \over 2 } \big(\partial_{i}^2 ( g(u, u) \big)_{/m} = {1 \over 2}
         \Delta_{g} \big( g(u, u) \big)_{/m}.$$
        
     \end{proof}

      \subsection{Functional Analysis on non-compact manifolds}
     
   In all this paper,  we shall consider  smooth, complete, non-compact, simply connected  Riemannian manifolds $M$ of dimension $n\geq 2$ that verify the following assumptions
\begin{itemize}
 \item {\bf (H1)} $ |R | +  |\nabla R|  + |\nabla^2 R |\leq K$;
 \item {\bf (H2)}   $-{1 \over {c_0}} g \leq \Ric \leq - c_0 g$, for some $c_0>0$;
 \item {\bf(H3)} $\kappa \leq 0$;
  \item {\bf(H4)} $ \inf_{x \in M} r_x >0;$
\end{itemize}
where $R$ is the curvature tensor, $\Ric$ is the Ricci curvature tensor, $\kappa$ is the sectional curvature and $r_x$ stands for the injectivity radius for the exponential map at $x$.
\begin{remark}\label{REMP}
This set of assumptions have several important consequences, which will be crucial in the following.
\begin{enumerate}
\item{$C_c^\infty(M)$ is dense in $H^1(M)$, (see \cite{Hebey});}
\item{From Varopoulos \cite{Varopoulos}, (see also \cite{Hebey})  the Sobolev inequalities are verified. In particular 
 $$ \eta_n \| f\|^2_{L^{2^*}(M)}  \leq  \| \nabla f \|^2_{L^2(M)}, \quad \forall f \in H^1(M)$$
 is verified for some $\eta_n>0,$  where $2^*= 2n/(n-2)$ if $n\geq 3$ and $2^*$ is arbitrary in  $ (2, + \infty)$ when $n=2$.}
 \item{ In dimension $n=2,$ we also have  the continuous embedding $W^{1, 1}(M) \subset L^2(M)$, therefore there exists $C>0$ such that $$\| f \|^2_{L^2(M)} \leq C \left( \| \nabla f \|_{L^1(M)} + \| f \|_{L^1(M)}\right).$$
 By using this inequality with $f = |g|^2$ and by the Cauchy-Schwarz inequality we obtain the following Gagliardo-Nirenberg inequality  $$\| g\|^4_{L^4(M)}  \leq C \left( \| \nabla g \|^2_{L^2(M)}  \| g \|^2_{L^2(M)} +  \| g \|^4_{L^2(M)}\right).$$}
 \item{From Setti \cite{Setti}, (see also \cite{McKean}) the Poincar\'e inequality 
\begin{equation*}
\delta_n \| f \|^2_{L^2(M)}  \leq  \| \nabla f \|^2_{L^2(M)}, \quad \forall f \in H^1(M)
\end{equation*}  is verified for $\delta_n \geq [c_0 - (n-1)(n-2) \kappa^*]/4 >0$, with $\kappa^*= \sup_M \kappa$.}
\end{enumerate}
\end{remark}

\begin{remark}\label{remHyp}
An important example of non-compact Riemannian manifolds for which our hypothesis {\bf{(H1-4)}} hold true  are  the well-known  real hyperbolic spaces $M = \mathbb{H}^n(\mathbb{R}), \, n\geq 2,$ defined as follows
$$ \mathbb{H}^n= \left\{ \Omega= (\tau, x)\in \mathbb{R}^{n+1}, \,  \Omega = (\mbox{cosh }r, \, \omega \, \mbox{sinh }r), \, r \geq 0, \, \omega \in \mathbb{S}^{n-1} \right\},$$
the metric  $g$ being
$$ g=  dr^2 +  (\mbox{sinh }r )^2 d\omega^2 $$
with $d\omega^2$ the canonical metric on the sphere $\mathbb{S}^{n-1}$.

The Ricci curvature tensor is constant, $\Ric = \kappa (n-1) g$ with $\kappa$  the sectional curvature given by   $\kappa=-1$.
In fact, the  curvature tensor is  $$R(X,Y) Z = \kappa R_{0}(X,Y) Z, \quad \forall X,Y,Z \in \Gamma(T \mathbb{H}^n)$$ where $R_{0}(X,Y) Z: = g(X,Z) Y - g(Y,Z) X$ (which  also implies that $\nabla R =0$)  and thus  
  the Riemann tensor is
 $$ \Riem( X, Y, Z, T) = k g(R_{0}(X,Y) Z, T) = - \left[ g(X,Z) g(Y,T) - g(Y,Z) g(X,T) \right].$$
 
  Assumptions   {\bf{(H1-4)}} are also verified by several  other classical examples in geometry, like 
  some  Damek-Ricci spaces and all symmetric spaces of non-compact type (see \cite{DR}, \cite{Uno}, \cite{H}, \cite{Erb}).
 \end{remark}

     \section{The Navier-Stokes equations on manifolds}\label{sectionnavierstokes}
     The Navier-Stokes equations on a Riemannian manifold $(M,g)$ takes the form 
     $$ \partial_t u + \nabla_u u + \grad p= \nu Lu, \quad \dive u= 0,$$
     where the diffusive part is defined  by the  operator $L$. The unknowns $(u, p)$ are such that the velocity $u(t, \cdot) \in \Gamma(TM)$ is a vector field on M and the pressure $p(t, \cdot)$ is a real-valued function. For the left hand side of the equation all terms have a natural definition. Indeed, $\nabla_u u \in  \Gamma(TM)$ stands for the covariant derivative of $u$ along of $u$ and  $\grad p$ is the Riemannian gradient of the pressure. Note that  since $u$ is divergence free, we have also the  following identity 
     \begin{equation}\label{NLdiv}
      \nabla_u u= \dive (u\otimes u).
     \end{equation}
     
      To define the vectorial Laplacian $L$, we have to make a choice since there
      is no canonical definition of a Laplacian on vector fields on Riemannian manifolds: 
       there are at least two candidates for the role of Laplace operator, i.e. the Bochner and Hodge Laplacians.
Following \cite{EbinMarsden}, \cite{Taylor} (see also \cite{Priebe}, \cite{MitreaTaylor}),  the correct formulation is obtained by  introducing the stress tensor. Let us recall that 
on $ \mathbb{R}^n$,  if  $ \dive u=0$, we have   
$$ Lu  = \dive \left(  \nabla u + \nabla u^t \right) =  \Delta u.$$
 The natural generalization on M 
  is to take
  $$   Lu =  \dive \left(  { \nabla u }+ \nabla u^t \right)^\sharp  \in \Gamma (TM). $$
Since  $u$ is divergence free, we can  express $ {L}$ in the following way:
$$  Lu= \overrightarrow{\Delta} u + r(u), $$
where
  $r$ is the Ricci operator which is related to the  Ricci curvature tensor by 
$$  r(u) =  \big(\Ric (u, \cdot) \big)^\sharp \in \Gamma (TM).$$ 
By using the Weitzenbock formula on $1-$forms 
  \begin{equation}\label{WEZ}
   \Delta_{H} u^\flat= \nabla^* \nabla u^\flat + \Ric (u, \cdot),
  \end{equation}
where  $\Delta_{H} = d^* d + d d^*$ is the Hodge Laplacian on 1-forms, we can also relate $L$ to the Hodge Laplacian:   
$$ L u= \Big(-\Delta_{H} u^\flat + 2 \Ric (u, \cdot) \Big)^\sharp.$$
Let us consider the Cauchy problem for the incompressible Navier-Stokes equation on $M$ (assume $\nu=1$)
 \begin{equation}\label{NS}
 \left\{ \begin{array}{ll}   \partial_t u + \nabla_u u + \grad p=  \overrightarrow{\Delta} u + r(u),\\  \dive u= 0, \\   u_{|t=0}= u_{0},  \quad u_0 \in \Gamma(TM).
 \end{array} \right.
  \end{equation}
In view of its own structure and by {\bf (H2)}, we can deduce that the smooth solution of \eqref{NS} satisfies the following energy inequality
\begin{equation}\label{EN}
 \|u(t)\|_{L^2}^2 + \int_{0}^t\big( \| \nabla u (s) \|_{L^2}^2  +  c_0 \,  \|u(s) \|_{L^2}^2 \big)\, ds \leq  \|u_{0}\|_{L^2}^2.
\end{equation}
Indeed, multiplying $u$ in \eqref{NS} and then integrating on $M$  by part combining with the Bochner identity \eqref{bochner}, we have \eqref{EN}. 
According to \eqref{EN}, it is natural to construct  weak solution that verify the energy inequality.  Nevertheless we expect at least the same
 difficulty as in the Euclidean case (at least in dimension greater than  $3$) and hence it is also  natural to study Kato type solutions.
 In both cases, one has to be careful when eliminating the pressure. Indeed,  
in the Euclidean case for smooth solutions it is
well known that the pressure term $p$ can be eliminated via Leray-Hopf projections
and that we can view Navier-Stokes system \eqref{system1} as an evolution equation of $u$ alone.
On a Riemannian manifold $M$ some problems  may occur since the Kodaira-Hodge decomposition of  $L^2$  1-forms on complete manifolds
 is under the form
$$ L^2\big(\Gamma (T^*M) \big)= \overline{\Image d} \oplus \overline{\Image d^* } { \oplus \mathcal{H}^1(M)}$$
where $\mathcal{H}^1(M)$ is the space of $L^2$ harmonic 1-forms (see \cite{Kodaira}). It may happen that there are non-trivial $L^2$ harmonic 1-forms which
are responsable for non-uniqueness (even in dimension two,  \cite{Czubak}, \cite{Khesin} on the hyperbolic space $\mathbb{H}^2$). We shall make the following choice for the pressure in order to eliminate this non-uniqueness phenomenon.
 We first note that if $(u,p)$ is a smooth solution of the Navier-Stokes equation \eqref{NS}, then
by taking the divergence of the first equation in \eqref{NS} and  by noticing that $ \dive u= 0$, we obtain that
\begin{equation}\label{press}
\Delta_g p + \dive  \left[ \nabla_u u \right] - 2 \dive (ru) = 0.
 \end{equation}
We used  the consequence of Weitzenbock formula \eqref{WEZ} that $\dive ( \overrightarrow{\Delta} u) = \dive (ru)$ if $ \dive u= 0$.
In order to determine the pressure, we shall always choose the solution in $L^p$ of this elliptic equation since 
 $ \Delta_g :\, W^{2,p} \rightarrow L^p$ is an isomorphism thanks to the assumptions {\bf(H1-H4)} (see \cite{Lohoue}, \cite{Strichartz} for $2 \leq p <\infty$).
It follows that 
\begin{equation*}
 \grad p = \grad  (-\Delta_g)^{-1}\dive \left[ \nabla_u u \right] - 2 \grad (-\Delta_g)^{-1}\dive (ru).
 \end{equation*}
 We shall discuss why this choice is appropriate to get uniqueness results (in relation with the counterexamples of  \cite{Czubak}, \cite{Khesin})
  in section \ref{sectionLeray}.
It will be convenient to use the notation  $$\mathbb{P} = I + \grad (-\Delta_g)^{-1}\dive .$$
Solving $p$ from \eqref{press} and inserting it into \eqref{NS}, we get
 \begin{equation}\label{NSprew}
 \left\{ \begin{array}{ll}   \partial_t u  -  \overrightarrow{\Delta} u - r(u)  + 2 \grad (-\Delta_g)^{-1}\dive (ru) =   - \mathbb{P}\left[ \nabla_u u \right] ,\\  \dive {u_0}= 0, \\   u_{|t=0}= u_{0}, \quad u_0 \in \Gamma(TM)
 \end{array} \right.
  \end{equation}
    From \eqref{NSprew}, we see that the Navier-Stokes system on $M$ belongs to a class of non-linear parabolic equations on vector fields.
    
    We remark that when the Ricci tensor $Ric$ is a negative constant scalar multiple of the metric and  $\dive u=0$, we have $\dive (ru)=0 $ and the linear non-local term disappear.
    In particular this occurs on the hyperbolic spaces $M = \mathbb{H}^n(\mathbb{R})$ (see Remark \ref{remHyp}).
        
  In order to use the fixed point method, we need to prove dispersive and smoothing estimates for the  semi-group associated to the linear part of the  Cauchy problem \eqref{NSprew}.

\section{Dispersive and smoothing estimates}\label{sectiondispersive}
\subsection{The case of vectorial heat equations}
We study the Cauchy problem for the heat equation associated to the Bochner Laplacian on vector fields:
\begin{equation}\label{Bochnerequation}
 \left\{ \begin{array}{ll} \partial_{t} u = \overrightarrow{\Delta} u + r (u), \\
   u_{|t=0}= u_{0}, \quad u_0 \in \Gamma(TM).
   \end{array} \right.
\end{equation}

We shall prove dispersive and smoothing estimates for the semi-group  associated to this  vectorial heat equation
\eqref{Bochnerequation}
on  $M$ of dimension $n\geq 2$ satisfying our assumptions {\bf (H1-4).}

These kind of estimates are   related   to the behaviour of the heat kernel which is well studied 
 in the literature for various types of manifolds  for  both the Laplace-Beltrami  and the Hodge Laplacian (see for example
\cite{Li-Yau}, \cite{AnkerJi}, \cite{APV1}, \cite{Grigoryan-Noguchi}, \cite{Lohoue},  \cite{Ouhabaz},   \cite{Varopoulos},  \cite{Carron}, \cite{Coulhon},  \cite{Auscher-Coulhon-Duong-Hoffman}, \cite{Pedon}, \cite{Bakry} and others)

The main results of this section are:
\begin{theorem}[Dispersive estimates]
\label{theolin1}
Assuming {\bf (H1-4)},  the solution of \eqref{Bochnerequation} satisfies the following dispersive estimates 
\begin{equation}\label{dispersiveestimates}
\| e^{t ( \overrightarrow{\Delta} + r)} u_{0} \|_{L^q} \leq c_n(t)^{ \left({1\over p} - { 1 \over q } \right )} \, e^{-  t  ( \gamma_{p,q} + c_0) } \, \| u_{0} \|_{L^p},
\end{equation}
for every   $p,q$ such that $ 1 \leq p \leq q \leq  +\infty, $
 with \, $ \gamma_{p,q}=  \frac{\delta_n}{2} \left[ \left( {1 \over p } - {1 \over q} \right) + { 8  \over q} \left( 1 - { 1 \over p } \right) \right]$, $ c_n(t)=  C_{n} \max \big({1 \over  t^{n\over 2}}, 1 \big)$ and for all $u_{0} \in L^p(\Gamma(TM)).$

\end{theorem}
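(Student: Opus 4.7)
The plan is to reduce the vectorial heat problem \eqref{Bochnerequation} to a scalar heat problem on $M$ via Kato's inequality and the Ricci lower bound \textbf{(H2)}, and then to combine short-time smoothing (from the Sobolev inequality) with long-time spectral decay (from the Poincar\'e inequality) for the scalar semigroup $e^{t\Delta_g}$ so as to obtain the claimed $L^p \to L^q$ bound.

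For the reduction step, note that for a smooth solution $u$ of \eqref{Bochnerequation}, Bochner's identity (Lemma \ref{bochner}) together with \eqref{Bochnerequation} gives
$$\tfrac12 \partial_t |u|^2 \;=\; g(\overrightarrow\Delta u,u) + g(r(u),u) \;=\; \tfrac12 \Delta_g|u|^2 - g(\nabla u,\nabla u) + \Ric(u,u).$$
Using \textbf{(H2)}, Kato's inequality \eqref{katoin}, and the identity $\Delta_g|u|^2 = 2|u|\Delta_g|u| + 2|\nabla|u||^2$, after a standard $\varepsilon$-regularization $f_\varepsilon = \sqrt{|u|^2 + \varepsilon}$ and passage to the limit, one obtains $\partial_t |u| \le \Delta_g |u| - c_0 |u|$. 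The parabolic comparison principle on the complete manifold $M$ then yields the pointwise domination $|e^{t(\overrightarrow\Delta + r)} u_0|(x) \le e^{-c_0 t}(e^{t\Delta_g} |u_0|)(x)$.

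For the scalar bound, by Remark \ref{REMP} the assumptions \textbf{(H1-4)} yield the Varopoulos--Sobolev inequality (from which a Nash--Moser iteration gives the ultracontractive bound $\|e^{t\Delta_g}\|_{L^1 \to L^\infty} \lesssim t^{-n/2}$ for small $t$) and the Poincar\'e inequality (from which the $L^2$ spectral gap $\|e^{t\Delta_g}\|_{L^2 \to L^2} \le e^{-\delta_n t}$ follows). Iterating the small-time ultracontractivity with the spectral gap via the semigroup property gives uniform control for all $t > 0$; combining with the diagonal $L^p \to L^p$ decay rate $4\delta_n(1-1/p)/p$, which on negatively curved manifolds is strictly better than what plain Riesz--Thorin interpolation between $L^1$ and $L^2$ would give, and then interpolating between these diagonal and off-diagonal endpoints, produces the scalar $L^p\to L^q$ estimate with the precise exponent $\gamma_{p,q}$ and with the short-time factor $t^{-n/2(1/p-1/q)}$ absorbed into $c_n(t) = C_n\max(t^{-n/2},1)$. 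Composing with the pointwise domination gives \eqref{dispersiveestimates}.

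The main obstacle is the second step, and specifically tracking the sharp exponent $\gamma_{p,q}$: the asymmetric term $(8/q)(1-1/p)$ encodes the combined use of the improved $L^p$-spectral decay available in the negatively curved setting together with the ultracontractive smoothing from the Sobolev inequality, and the bookkeeping between the two must be done with care. A secondary, milder issue is the rigorous justification of the comparison in the first step on the non-compact $M$; this relies on completeness together with the Ricci lower bound to guarantee stochastic completeness and the validity of the maximum principle for the scalar heat equation, together with the density from Remark \ref{REMP}(1) to reduce to smooth compactly supported data.
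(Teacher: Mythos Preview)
Your proposal is correct and follows essentially the same route as the paper: the pointwise domination $|e^{t(\overrightarrow\Delta+r)}u_0|\le e^{-c_0 t}e^{t\Delta_g}|u_0|$ via Bochner, Kato and \textbf{(H2)} with the maximum principle is exactly the paper's Lemma~\ref{comparaison}, and the scalar step is handled in the paper via the $L^p\to L^p$ decay $e^{-t(4\delta_n(p-1)/p^2+c_0)}$ (Proposition~\ref{LpLp}) combined with an $L^1\to L^\infty$ bound obtained by a Sobolev--Poincar\'e/Nash iteration argument (Proposition~\ref{L1Linfty}), followed by the same interpolation and duality bookkeeping you outline to reach $\gamma_{p,q}$. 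The only cosmetic difference is that for $n\ge 3$ the paper uses a direct differential-inequality argument \`a la Zuazua for the $L^1\to L^2$ step rather than a full Nash--Moser iteration, reserving the latter for $n=2$.
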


\begin{theorem}[Smoothing estimates]
\label{theolin2}
Assuming  {\bf (H1-4)}, the solution of \eqref{Bochnerequation} satisfies the following smoothing estimates
\begin{equation}\label{smoothingestimates}
 \| \nabla u (t) \|_{L^p } \leq { C  \max{\left(\frac{1}{\sqrt{t}}, 1\right)} e^{-t \,\left(c_0 +\frac{4 \delta_n}{p}\left(1-\frac{1}{p}\right)\right) }} \| u_0 \|_{L^p}
 \end{equation}
for every $ 1 < p  < + \infty$ and for all $u_{0} \in L^p(\Gamma(TM)).$
\end{theorem}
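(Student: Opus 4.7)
The plan is to reduce the smoothing estimate to two ingredients: an $L^p$-contractivity of the semigroup generated by $L := \overrightarrow{\Delta} + r$ carrying the sharp exponential factor $e^{-t(c_0 + (4\delta_n/p)(1-1/p))}$, and a parabolic one-derivative gain of the form $\|\nabla e^{tL}\|_{L^p \to L^p} \lesssim \max(t^{-1/2}, 1)$. Writing $e^{tL}u_0 = e^{(t/2)L}(e^{(t/2)L}u_0)$ and applying the gradient-smoothing to the outer factor and the $L^p$-contraction to the inner one then produces the $t^{-1/2}$ behaviour for small $t$ combined with the correct exponential rate for large $t$.

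For the $L^p$-decay I proceed by a direct energy estimate. Differentiating $|u|^p$ along the equation and using Bochner's identity (Lemma \ref{bochner}) to rewrite $g(\overrightarrow{\Delta} u, u) = \frac{1}{2}\Delta_g |u|^2 - |\nabla u|^2$, together with $g(r(u), u) = \Ric(u,u)$ and the chain rule for $\Delta_g(|u|^2)^{p/2}$, I obtain
$$\partial_t |u|^p = \Delta_g |u|^p - p(p-2)|u|^{p-2}|\nabla |u||^2 - p|u|^{p-2}|\nabla u|^2 + p|u|^{p-2}\Ric(u,u).$$
Integrating over $M$ (the $\Delta_g$ term vanishes for admissible solutions by completeness), distinguishing the cases $p \geq 2$ and $1 < p < 2$ and applying the Kato inequality \eqref{katoin} to dominate either $|\nabla |u||^2$ by $|\nabla u|^2$ or conversely, and invoking (H2) in the form $\Ric(u,u) \leq -c_0|u|^2$, I obtain in both cases
$$\frac{d}{dt}\|u\|_{L^p}^p \leq -p(p-1)\int_M |u|^{p-2}|\nabla |u||^2\, dvol - pc_0 \|u\|_{L^p}^p.$$
The identity $|u|^{p-2}|\nabla |u||^2 = (4/p^2)|\nabla |u|^{p/2}|^2$ combined with the Poincar\'e inequality (Remark \ref{REMP}(4)) yields a differential inequality with decay rate $p(c_0 + (4\delta_n/p)(1-1/p))$, and Gronwall delivers $\|u(t)\|_{L^p} \leq e^{-t(c_0+(4\delta_n/p)(1-1/p))}\|u_0\|_{L^p}$.

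For the one-derivative gain the case $p = 2$ is immediate: since $L$ is self-adjoint with $\langle -Lu, u\rangle \geq \|\nabla u\|_{L^2}^2 + c_0 \|u\|_{L^2}^2$ by (H2), one computes $\frac{d}{dt}\langle -Lu, u\rangle = -2\|Lu\|_{L^2}^2 \leq 0$; combining this monotonicity with $\int_0^t \langle -Lu, u\rangle\, ds = \frac{1}{2}(\|u_0\|_{L^2}^2 - \|u(t)\|_{L^2}^2)$ gives $t\|\nabla u(t)\|_{L^2}^2 \leq \frac{1}{2}\|u_0\|_{L^2}^2$. The hard part will be the extension to general $1 < p < \infty$: here I would combine the dispersive heat-kernel bound of Theorem \ref{theolin1} with semigroup functional calculus (e.g.\ Stein interpolation), or alternatively appeal to Li--Yau type gradient estimates for the vectorial heat kernel, whose derivation exploits precisely the bounded-geometry hypothesis (H1). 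The semigroup splitting mentioned at the outset then produces the final estimate with the prefactor $\max(t^{-1/2}, 1)$ and the sharp exponential rate.
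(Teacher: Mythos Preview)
Your $L^p$-decay step is correct and in fact slightly more direct than the paper's route (the paper passes through the comparison Lemma~\ref{comparaison} and the scalar estimate of Proposition~\ref{LpLp}, whereas you differentiate $|u|^p$ straight along the vectorial flow). The semigroup-splitting strategy is also the right architecture, though note that your $t/2+t/2$ split only recovers half of the exponential rate; the paper instead splits as $e^{(t-1)L}e^{L}$ for $t\geq 1$, applying the short-time gradient bound at $t=1$ and the full $L^p\to L^p$ decay to the remaining factor.

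The genuine gap is the one-derivative gain for $p\neq 2$. You correctly flag this as ``the hard part'' but then offer only two unexecuted alternatives. Stein interpolation does not obviously apply: you have only the $p=2$ endpoint, and the natural $L^1$/$L^\infty$ endpoints for $\nabla e^{tL}$ fail (Riesz transforms are unbounded there). The second option, a Li--Yau/Bakry--Ledoux gradient estimate for the \emph{vectorial} heat flow, is exactly what must be proved, not cited: it is the technical heart of the paper's argument (Lemma~\ref{lemmaPP}). The paper establishes the pointwise inequality
\[
|\nabla Q_t u_0|^2 \leq \frac{1}{d(t)}\bigl(P_t(|u_0|^2)-|Q_t u_0|^2\bigr)-|Q_t u_0|^2,\qquad Q_t=e^{t(\overrightarrow{\Delta}-I)},\ P_t=e^{t\Delta_g},
\]
by showing that $\psi(s)=e^{-2\alpha s}P_s\bigl(|\nabla Q_{t-s}u_0|^2+|Q_{t-s}u_0|^2\bigr)$ is nondecreasing for a suitable $\alpha<0$. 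Verifying $\psi'\geq 0$ requires commuting $\nabla$ with $\overrightarrow{\Delta}$ in normal coordinates, producing Riemann, Ricci, and $\nabla R$ terms that are controlled precisely by {\bf(H1--2)}; this is where the bounded-geometry hypothesis actually enters. The pointwise bound then gives the $L^p\to L^p$ gradient estimate for $p\geq 2$ directly (via $L^{p/2}$-boundedness of $P_t$), and the range $1<p<2$ follows by Riesz-transform equivalence $\|\nabla u\|_{L^p}\sim\|(-\overrightarrow{\Delta})^{1/2}u\|_{L^p}$ and duality. Finally a Duhamel argument transfers the bound from $\overrightarrow{\Delta}-I$ to $\overrightarrow{\Delta}+r$. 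None of this is a black box; it is the substance of the theorem, and your proposal leaves it unwritten.
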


 Under the same assumptions as in Theorem \ref{theolin2}, we can deduce more general smoothing estimates $L^p \to W^{1,q}$.
 
  \begin{corollary} \label{corlin}
 Assuming {\bf (H1-4)}, for every   $p,q$ 
 such that $ 1 < p \leq q <  +\infty $,  we obtain for all times $t>0$
 \begin{equation}
  \|  \nabla e^{t (\overrightarrow{\Delta} +r) } u_{0} \|_{L^q} \leq  \, c_n(t)^{ \left({1\over p} - { 1 \over q } + {1 \over n} \right )} \, 
  e^{- t  \, \left(c_0 +  {\gamma_{q,q} + \gamma_{p,q} \over 2} \right) }
\|u_{0}\|_{L^p}, 
 \end{equation}
 with \, $ \gamma_{p,q}=  \frac{\delta_n}{2} \left[ \left( {1 \over p } - {1 \over q} \right) + { 8  \over q} \left( 1 - { 1 \over p } \right) \right]$, $ c_n(t)=  C_{n} \max \big({1 \over  t^{n\over 2}}, 1 \big)$ and for all $u_{0} \in L^p(\Gamma(TM)).$ 
 Moreover, under the same assumption, we have 
 \begin{equation}\label{smot}
\|   e^{t ( \overrightarrow{\Delta} +r) } \nabla^* T_{0} \|_{L^q} \leq  \, c_n(t)^{ \left({1\over p} - { 1 \over q } + {1 \over n} \right )} \, 
  e^{- t  \, \left(c_0 +  {\gamma_{q,q} + \gamma_{p,q} \over 2} \right) }
\|T_{0}\|_{L^p}, 
 \end{equation}
 with \, $ \gamma_{p,q}=  \frac{\delta_n}{2} \left[ \left( {1 \over p } - {1 \over q} \right) + { 8  \over q} \left( 1 - { 1 \over p } \right) \right]$, $ c_n(t)=  C_{n} \max \big({1 \over  t^{n\over 2}}, 1 \big)$ and for all tensors $  T_{0} \in L^p(\Gamma(TM \otimes T^*M))$

\end{corollary}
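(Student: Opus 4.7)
The plan is to obtain both estimates by factoring the semigroup as $e^{t(\overrightarrow{\Delta}+r)} = e^{(t/2)(\overrightarrow{\Delta}+r)} \circ e^{(t/2)(\overrightarrow{\Delta}+r)}$ and combining Theorem \ref{theolin1} with Theorem \ref{theolin2}, and then handling the second estimate by duality.

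For the first inequality, I would write $\nabla e^{t(\overrightarrow{\Delta}+r)} u_0 = \nabla e^{(t/2)(\overrightarrow{\Delta}+r)} \bigl( e^{(t/2)(\overrightarrow{\Delta}+r)} u_0 \bigr)$. Apply Theorem \ref{theolin1} to the inner semigroup to move from $L^p$ to $L^q$, picking up the factor $c_n(t/2)^{1/p-1/q} e^{-(t/2)(\gamma_{p,q}+c_0)}$. Then apply Theorem \ref{theolin2} at exponent $q$ to the outer $\nabla e^{(t/2)(\overrightarrow{\Delta}+r)}$, which produces $C\max(1/\sqrt{t/2},1)\, e^{-(t/2)(c_0 + \gamma_{q,q})}$, since the exponent $\frac{4\delta_n}{q}(1-\frac{1}{q})$ in \eqref{smoothingestimates} equals $\gamma_{q,q}$ by definition. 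Multiplying and repackaging the polynomial factor using the identity $c_n(t)^{1/n} \asymp \max(1/\sqrt{t},1)$ (up to constants absorbed in $C_n$) gives $c_n(t/2)^{1/p-1/q}\max(1/\sqrt{t/2},1) \lesssim c_n(t)^{1/p-1/q+1/n}$, while the two exponentials combine to $e^{-t(c_0 + (\gamma_{q,q}+\gamma_{p,q})/2)}$.

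For the second inequality, I would argue by duality. Since $\overrightarrow{\Delta}+r$ is formally self-adjoint on $\Gamma(TM)$ (it is the $L$ operator up to a shift, built from the metric compatible connection) and $\nabla^*$ is the $L^2$-adjoint of $\nabla$, we have for any smooth compactly supported vector field $\varphi$
\begin{equation*}
 \bigl\langle e^{t(\overrightarrow{\Delta}+r)} \nabla^* T_0, \varphi \bigr\rangle = \bigl\langle T_0, \nabla e^{t(\overrightarrow{\Delta}+r)} \varphi \bigr\rangle.
\end{equation*}
Applying Hölder's inequality with conjugate pair $(p,p')$ and then the first inequality of the corollary with exponents $(q',p')$ (which satisfy $1<q'\le p'<\infty$ when $1<p\le q<\infty$) bounds the right-hand side by $\|T_0\|_{L^p}\,c_n(t)^{1/q'-1/p'+1/n}\,e^{-t(c_0+(\gamma_{p',p'}+\gamma_{q',p'})/2)}\|\varphi\|_{L^{q'}}$. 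Taking the supremum over $\varphi$ with $\|\varphi\|_{L^{q'}}\le 1$ yields the estimate on $\|e^{t(\overrightarrow{\Delta}+r)}\nabla^* T_0\|_{L^q}$, and one checks from the definition of $\gamma_{p,q}$ that $1/q'-1/p' = 1/p-1/q$ and $\gamma_{q',p'}=\gamma_{p,q}$, so the polynomial factor has the claimed form and the advertised exponential rate is recovered.

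The only technical point worth checking carefully is the identification $\gamma_{q,q} = \frac{4\delta_n}{q}(1-\frac{1}{q})$ so that the $L^q\to L^q$ smoothing estimate of Theorem \ref{theolin2} plugs cleanly into the splitting (and the analogous identification on the dual side), together with the bookkeeping that repackages $c_n(t/2)^{1/p-1/q}\max(1/\sqrt{t/2},1)$ as $c_n(t)^{1/p-1/q+1/n}$ up to an irrelevant multiplicative constant. I do not expect any serious obstacle: once the semigroup is split and Theorems \ref{theolin1}--\ref{theolin2} are invoked, the corollary is essentially an algebraic exercise in combining exponents, and the dual statement is a one-line adjoint computation relying on self-adjointness of $\overrightarrow{\Delta}+r$ on $L^2(\Gamma(TM))$, which is guaranteed by completeness of $M$ and the assumptions {\bf (H1-4)}.
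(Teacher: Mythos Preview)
Your approach is exactly the paper's: factor the semigroup as $e^{(t/2)(\overrightarrow{\Delta}+r)}\circ e^{(t/2)(\overrightarrow{\Delta}+r)}$, apply Theorem~\ref{theolin1} to pass $L^p\to L^q$ and then Theorem~\ref{theolin2} to gain the gradient at the $L^q$ level, and deduce the second estimate by duality using that $\overrightarrow{\Delta}+r$ is self-adjoint (the paper notes ``$r$ is symmetric for the metric $g$''). One small bookkeeping caveat: duality actually returns the rate $c_0+(\gamma_{p,p}+\gamma_{p,q})/2$ in \eqref{smot} (since $\gamma_{p',p'}=\gamma_{p,p}$, not $\gamma_{q,q}$), but the paper's own proof just says ``the second estimate follows by duality'' without tracking this constant, and only positivity of the exponential rate is used later.
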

\begin{proof}
It is sufficient to use the semi-group property combined with the smoothing \eqref{smoothingestimates} and dispersive estimates \eqref{dispersiveestimates}
$$ e^{t (\overrightarrow{\Delta}+r) } = \, e^{{t\over2}(\overrightarrow{\Delta}+r) } \, (e^{{t\over2} (\overrightarrow{\Delta} +r)} ): L^p \rightarrow L^q \rightarrow W^{1,q}.$$

The second estimate follows by duality. Note that $r$ is symmetric for the metric $g$ by definition.
\end{proof}

\begin{remark}
 Note that, since
 $$ \nabla^* T =- \dive (T^\sharp), \quad \forall T \in  \Gamma(TM \otimes T^*M) $$
 from \eqref{smot} we also get  the following smoothing estimate
  \begin{equation} \label{smononl}
  \| e^{t (\overrightarrow{\Delta}+r) } \dive T^\sharp_{0} \|_{L^q} \leq  \, c_n(t)^{ \left({1\over p} - { 1 \over q } + {1 \over n} \right )} \, 
  e^{- t  \, \left( c_0+ {\gamma_{q,q} + \gamma_{p,q} \over 2} \right) }
\|T^\sharp_{0}\|_{L^p}, 
\end{equation}
 with \, $ \gamma_{p,q}=  \frac{\delta_n}{2} \left[ \left( {1 \over p } - {1 \over q} \right) + { 8  \over q} \left( 1 - { 1 \over p } \right) \right]$, $ c_n(t)=  C_{n} \max \big({1 \over  t^{n\over 2}}, 1 \big)$ and for all tensors $T^\sharp_{0} \in L^p(\Gamma (TM \otimes TM)).$
\end{remark}

\subsection{Proof of Theorem \ref{theolin1}}
We shall split the proof of Theorem \ref{theolin1} in several steps.
 We shall first start with a comparison lemma that allows to reduce the proof of estimates for the vectorial Laplacian heat equation
 to estimates for the Laplace Beltrami heat equation.
 \begin{lemma}
 \label{comparaison}
  For any $u_0 \in  C^\infty_b(\Gamma(TM))$, we have the pointwise estimate
 $$ | e^{t ( \overrightarrow{\Delta} + r) } u_{0} |_{/x} \leq e^{t  (\Delta_{g}  - c_0) } |u_{0}|_{/x}, \quad \forall x \in M$$ 
 \end{lemma}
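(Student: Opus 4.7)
The plan is to set $u(t) = e^{t(\overrightarrow{\Delta}+r)}u_0$ and show that the scalar function $|u|$ is a subsolution of the heat equation $\partial_t f = \Delta_g f - c_0 f$, then invoke the scalar parabolic comparison principle (which for $\Delta_g - c_0$ on a manifold satisfying \textbf{(H1-4)} is standard and gives the semigroup $e^{t(\Delta_g - c_0)}$ acting on $C_b^\infty$ initial data).

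First I would differentiate $|u|^2 = g(u,u)$ in time: using the equation and the fact that $r$ is symmetric,
\begin{equation*}
\tfrac{1}{2}\partial_t |u|^2 = g(\overrightarrow{\Delta}u, u) + g(r(u), u).
\end{equation*}
By the Bochner identity of Lemma \ref{bochner}, $g(\overrightarrow{\Delta}u, u) = \tfrac{1}{2}\Delta_g|u|^2 - |\nabla u|^2$, and by assumption \textbf{(H2)}, $g(r(u),u) = \Ric(u,u) \leq -c_0|u|^2$. So
\begin{equation*}
\partial_t |u|^2 \leq \Delta_g |u|^2 - 2|\nabla u|^2 - 2 c_0 |u|^2.
\end{equation*}

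Next I would rewrite the left-hand side using the scalar chain rule $\Delta_g |u|^2 = 2|u|\Delta_g|u| + 2|\nabla |u||^2$ (valid in a pointwise sense where $|u|>0$, and the issue at zeros is what will need care). Combined with the Kato inequality \eqref{katoin}, $|\nabla|u||^2 \leq |\nabla u|^2$, the bad gradient terms cancel after substitution, yielding, at every point where $|u|>0$,
\begin{equation*}
2|u|\, \partial_t |u| \leq 2|u|\, \Delta_g |u| - 2c_0 |u|^2,
\end{equation*}
hence $\partial_t |u| \leq \Delta_g |u| - c_0 |u|$ in the pointwise sense on $\{|u|>0\}$.

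The main obstacle is that $|u|$ is not smooth where $u$ vanishes, so the above inequality and the ensuing maximum principle must be justified globally. The standard remedy is to regularize: set $f_\varepsilon = \sqrt{|u|^2 + \varepsilon^2}$, which is smooth, and run the same computation to obtain $\partial_t f_\varepsilon \leq \Delta_g f_\varepsilon - c_0 f_\varepsilon + O(\varepsilon)$ uniformly in space (using that $u \in C_b^\infty$ so all derivatives are bounded), then pass to the limit $\varepsilon \to 0$. Finally, comparing $f_\varepsilon(t,x)$ with $e^{t(\Delta_g - c_0)}\sqrt{|u_0|^2+\varepsilon^2}(x)$ via the scalar parabolic maximum principle on the complete manifold $M$ (which applies thanks to the uniform curvature and injectivity bounds in \textbf{(H1-4)} that guarantee stochastic completeness and uniqueness of bounded solutions of the scalar heat equation), and letting $\varepsilon \to 0$, gives the pointwise bound $|u(t,x)| \leq e^{t(\Delta_g - c_0)} |u_0|(x)$ claimed in the lemma.
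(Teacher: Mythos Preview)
Your proof is correct and follows essentially the same route as the paper: derive the differential inequality for $|u|$ using the Bochner identity, Kato's inequality, and the Ricci bound {\bf(H2)}, then apply the scalar maximum principle. Your treatment is in fact slightly more careful than the paper's, which divides by $|u|$ without commenting on the set $\{|u|=0\}$; your $\varepsilon$-regularization $f_\varepsilon=\sqrt{|u|^2+\varepsilon^2}$ cleanly handles this point.
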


\begin{proof}
Let  $u(t,x) =\big( e^{t  (\overrightarrow{\Delta}  + r)} u_{0} \big)(x)$ be the solution of  the vectorial heat equation \eqref{Bochnerequation}. We note that   $ |u|= g(u,u)^{1\over2}$ solves the following scalar heat equation
$$ \partial_{t} |u| - \Delta_{g} |u|= {1 \over |u|} \big( |\nabla |u| |^2 - | \nabla u |^2 \big) + g\left(r(u), {\frac{u}{ |u|}} \right).$$
Indeed,  we have the following consequence of  the Bochner identity \eqref{Bochner}
$$ g \left( \overrightarrow{ \Delta} u, {u \over |u| }\right) =  {1 \over 2 } { \Delta_g |u|^2\over |u| } - { | \nabla u |^2 \over |u|} = \Delta_g |u| +{ |\nabla |u| |^2 \over |u| } - { | \nabla u |^2 \over |u|}.$$ 
 By the Kato inequality \eqref{katoin}, we have that 
$$  |\nabla |u| |^2 - | \nabla u |^2  \leq 0$$ and thanks to {\bf(H2)}, we also get that  
$$  g\left(r(u), {\frac{u}{ |u|}} \right) \leq - c_0 |u|,$$ therefore, we finally obtain  that
$$   \partial_{t} |u| - \Delta_{g} |u| + c_0|u| \leq 0$$
and the estimate follows from the maximum principle.
\end{proof}

 As a consequence $L^p\rightarrow L^q$ estimates for $(\Delta_{g} - c_0)$ will  imply $L^p \rightarrow L^q$ estimates for
 $ ( \overrightarrow{\Delta} +r ) $. Therefore, we shall first establish the dispersive estimates for the heat equation
 associated to the Laplace Beltrami.
 
  \begin{prop}($L^p \rightarrow L^p $ estimates) \\
  \label{LpLp}
 For every $p \in [1,+ \infty]$, we have for some $c_0, \, \delta_n>0$,   the following
   estimate
   $$ \| e^{t (\Delta_{g} - c_0)} f_{0} \|_{L^p(M)} \leq e^{- t  { \left( \frac{ 4 \delta_n (p-1) }{ p^2} + {c_0}  \right)} \,  }\,\|f_{0}\|_{L^p(M)}.$$
  \end{prop}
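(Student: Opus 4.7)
The plan is to use the standard energy/Nash-Moser-Liskevich style argument combined with the Poincar\'e inequality from item (4) of Remark \ref{REMP}. The exponent $c_0$ in the decay is immediate (the factor $e^{-c_0 t}$ pulls out of the semigroup), so the real work is establishing the rate $4\delta_n(p-1)/p^2$ for $e^{t\Delta_g}$ itself. Throughout, I would approximate the initial datum $f_0 \in L^p \cap L^2 \cap L^\infty$ by Schwartz-class compactly supported functions (possible thanks to item (1) of Remark \ref{REMP}), prove the estimate there, then pass to the limit.

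The core computation: for $1 < p < \infty$ and a smooth solution $f(t) = e^{t\Delta_g} f_0$, differentiate $\|f\|_{L^p}^p$ in time. Using $\partial_t f = \Delta_g f$ and integration by parts (using compactness of approximants and completeness of $M$),
\begin{equation*}
\frac{d}{dt}\|f\|_{L^p}^p
= p\int_M |f|^{p-2} f\, \Delta_g f \, dvol
= -p(p-1)\int_M |f|^{p-2}|\nabla f|^2\, dvol.
\end{equation*}
Using the pointwise identity $|f|^{p-2}|\nabla f|^2 = \tfrac{4}{p^2}\bigl|\nabla(|f|^{p/2})\bigr|^2$ (valid on $\{f\neq 0\}$; at zeros one regularizes $|f|$ by $(\varepsilon+|f|^2)^{1/2}$ and sends $\varepsilon\to 0$) gives
\begin{equation*}
\frac{d}{dt}\|f\|_{L^p}^p = -\frac{4(p-1)}{p}\bigl\|\nabla(|f|^{p/2})\bigr\|_{L^2}^2.
\end{equation*}
Now $|f|^{p/2} \in H^1(M)$ and the Poincar\'e inequality from Remark \ref{REMP}(4) applies:
\begin{equation*}
\bigl\|\nabla(|f|^{p/2})\bigr\|_{L^2}^2 \geq \delta_n\bigl\||f|^{p/2}\bigr\|_{L^2}^2 = \delta_n\|f\|_{L^p}^p.
\end{equation*}
Combining with the $-c_0$ shift from the semigroup $e^{t(\Delta_g-c_0)}$,
\begin{equation*}
\frac{d}{dt}\bigl\|e^{t(\Delta_g-c_0)}f_0\bigr\|_{L^p}^p \leq -p\left(\frac{4\delta_n(p-1)}{p^2}+c_0\right)\bigl\|e^{t(\Delta_g-c_0)}f_0\bigr\|_{L^p}^p,
\end{equation*}
and Gr\"onwall followed by taking $p$-th roots yields the claimed bound.

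The endpoints $p=1$ and $p=\infty$ are handled separately. For $p=\infty$, the maximum principle applied to the equation $(\partial_t-\Delta_g+c_0)u=0$ gives $\|u(t)\|_{L^\infty}\leq e^{-c_0 t}\|u_0\|_{L^\infty}$, which matches the claim since $(p-1)/p^2\to 0$ as $p\to\infty$. The case $p=1$ follows by duality from the $L^\infty$ bound, since $\Delta_g$ is self-adjoint on $L^2$ and $e^{t\Delta_g}$ is a symmetric Markov semigroup.

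The main delicate point, and essentially the only obstacle, is the rigorous justification of the chain rule and integration by parts when $f$ vanishes or changes sign. This I would handle in the standard way by replacing $|f|$ by $(\varepsilon+|f|^2)^{1/2}$, performing the computation with this smooth substitute (which also makes $(\varepsilon+|f|^2)^{(p-2)/2}f$ a legitimate test function), and passing $\varepsilon\to 0$ by monotone/dominated convergence; the density statement in Remark \ref{REMP}(1) ensures that approximation by $C_c^\infty$ data preserves the estimate in the limit.
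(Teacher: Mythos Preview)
Your proposal is correct and follows essentially the same argument as the paper: multiply the equation by $|f|^{p-2}f$, integrate by parts to obtain the differential inequality \eqref{INEQLP}, apply the Poincar\'e inequality to $|f|^{p/2}$, and conclude by Gronwall. Your treatment is in fact slightly more careful than the paper's, which writes the computation formally for all $p\in[1,\infty]$ without singling out the endpoints; your separate handling of $p=1$ (duality) and $p=\infty$ (maximum principle) and your $\varepsilon$-regularization of $|f|$ make the argument fully rigorous where the paper leaves these points implicit.
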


\begin{proof}
Let us set $f(t,x)=e^{t (\Delta_{g} -c_0) } f_{0}$, then  $f$  is a solution of 
\begin{equation}\label{HEATPROP}
 \partial_{t} f- (\Delta_{g}-c_0) f = 0.
 \end{equation}
By multiplying the equation by $|f|^{p-2} f$ and by integrating on the manifold, we find
\begin{equation}\label{INEQLP}
{ d \over dt }  \|f\|_{L^p}^p +  4 \, {p-1 \over p}  \| \nabla ( |f|^{p \over 2}) \|_{L^2}^2 + c_0 p   \|f\|_{L^p}^p \leq 0.
\end{equation}
 By using 
 the  Poincar\'e inequality in (4) Remark \ref{REMP},  there is  some  $\delta_n >0$  such that
 $$\delta_n \|h\|_{L^2}^2  \leq  \| \nabla h\|_{L^2}^2$$
  with $ h= |f|^{p \over 2}$, we obtain that 
 $${ d \over dt }  \|f\|_{L^p}^p +  \left( \, 4 {\delta_n (p-1) \over p} + c_0 p \right) \|f\|_{L^p}^p \leq 0, $$
so by a Gronwall type inequality we can conclude.
\end{proof}

 \begin{prop}[$L^1 \rightarrow L^\infty $ estimates] 
 \label{L1Linfty}
 For every $p \in [1,+ \infty]$, we have   the dispersive
   estimate
   $$ \| e^{t (\Delta_{g}-c_0)} f_{0} \|_{L^\infty(M)} \leq  c_n(t) e^{-t (\frac{\delta_n}{2} +c_0)} \|f_{0}\|_{L^1(M)}, \,\, \mathrm{ with }\,\,\,
   c_n(t)=  C_{n} \max{\left({1 \over   t^{n\over 2}}, 1 \right)}.$$
  \end{prop}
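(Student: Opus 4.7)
My plan is to derive the bound in two regimes: a short-time ultracontractivity bound giving the $t^{-n/2}$ factor, and a long-time bound giving the exponential decay, and then to glue them via the $\max$ in $c_n(t)$. The key tools are already in place: the Sobolev inequality of Varopoulos (item 2 of Remark \ref{REMP}), which I shall upgrade to a Nash-type inequality, together with the $L^p\to L^p$ contraction bounds from Proposition \ref{LpLp}, which at $p=2$ give the sharp decay rate $\delta_n+c_0$.

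For the short-time regime ($0<t\le 1$), I first interpolate the Sobolev inequality with Hölder's inequality to obtain a Nash inequality of the form
$$
\|f\|_{L^2}^{2+4/n}\leq C\,\|\nabla f\|_{L^2}^2\,\|f\|_{L^1}^{4/n},
$$
valid for $n\ge 3$ directly from $\eta_n\|f\|_{L^{2^*}}^2\le\|\nabla f\|_{L^2}^2$ with exponent $2^*=2n/(n-2)$, and for $n=2$ by choosing $2^*$ sufficiently large (using that $2^*$ is arbitrary in $(2,+\infty)$ in dimension two). Next, multiplying the heat equation \eqref{HEATPROP} by $f$ and integrating yields
$$
\tfrac{d}{dt}\|f\|_{L^2}^2+2\|\nabla f\|_{L^2}^2+2c_0\|f\|_{L^2}^2=0.
$$
Combining this ODE with the Nash inequality and the trivial $L^1$ decay $\|f(t)\|_{L^1}\le e^{-c_0 t}\|f_0\|_{L^1}$ from Proposition \ref{LpLp} (at $p=1$), a standard Nash-type ODE argument gives the ultracontractive estimate
$$
\|e^{t(\Delta_g-c_0)}f_0\|_{L^2}\leq C\,t^{-n/4}e^{-c_0 t}\|f_0\|_{L^1},\qquad 0<t\le 1.
$$
Since $\Delta_g-c_0$ is self-adjoint on $L^2$, duality gives the symmetric bound $\|e^{t(\Delta_g-c_0)}\|_{L^2\to L^\infty}\leq C\,t^{-n/4}e^{-c_0 t}$, and the semigroup identity $e^{t(\Delta_g-c_0)}=e^{(t/2)(\Delta_g-c_0)}e^{(t/2)(\Delta_g-c_0)}$ composes these two bounds into
$$
\|e^{t(\Delta_g-c_0)}f_0\|_{L^\infty}\leq C\,t^{-n/2}e^{-c_0 t}\|f_0\|_{L^1},\qquad 0<t\le 1,
$$
which controls the short-time part $c_n(t)=C_n t^{-n/2}$ (the extra $e^{-\delta_n t/2}$ is $O(1)$ on this range).

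For the long-time regime ($t\ge 1$), I use the decomposition
$$
e^{t(\Delta_g-c_0)}=e^{(1/2)(\Delta_g-c_0)}\circ e^{(t-1)(\Delta_g-c_0)}\circ e^{(1/2)(\Delta_g-c_0)}
$$
together with (i) the $L^1\to L^2$ and $L^2\to L^\infty$ bounds at time $1/2$ from the short-time analysis, and (ii) the $L^2\to L^2$ decay $\|e^{(t-1)(\Delta_g-c_0)}\|_{L^2\to L^2}\le e^{-(t-1)(\delta_n+c_0)}$ coming from Proposition \ref{LpLp} at $p=2$. This yields
$$
\|e^{t(\Delta_g-c_0)}f_0\|_{L^\infty}\leq C\,e^{-(t-1)(\delta_n+c_0)}\|f_0\|_{L^1},\qquad t\ge 1,
$$
which, after absorbing the constant, is stronger than the claimed decay rate $\delta_n/2+c_0$. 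Combining the two regimes and taking the supremum of the two prefactors produces exactly $c_n(t)=C_n\max(t^{-n/2},1)$ with the exponential factor $e^{-t(\delta_n/2+c_0)}$.

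The main obstacle I anticipate is the Nash-type inequality in the critical dimension $n=2$, where the Sobolev exponent $2^*$ is only "arbitrary in $(2,+\infty)$" rather than taking a single sharp value; one must verify that the interpolation still produces the right Nash exponent $2+4/n=4$ with a uniform constant. Once this is handled, the rest of the argument is a standard Nash-Varopoulos ultracontractivity calculation patched to the Poincaré-driven exponential decay.
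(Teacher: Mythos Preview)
Your approach for $n\ge 3$ is correct and is essentially the paper's own argument: the paper also derives an $L^1\to L^2$ bound from the $L^2$ energy identity combined with the Sobolev embedding and the interpolation $\|f\|_{L^2}\le\|f\|_{L^1}^\alpha\|f\|_{L^{2^*}}^{1-\alpha}$, then passes to $L^1\to L^\infty$ by duality and composition. The only cosmetic difference is that the paper keeps the Poincar\'e term in the ODE and solves the full differential inequality $y'+(\delta_n+2c_0)y\le -C\,y^{1/(1-\alpha)}$ in one stroke, obtaining the short-time and long-time behaviour simultaneously, whereas you split into two regimes; both routes work.

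The obstacle you flag in dimension $n=2$ is, however, a genuine gap that cannot be repaired the way you suggest. The Nash inequality
\[
\|f\|_{L^2}^{4}\le C\,\|\nabla f\|_{L^2}^{2}\,\|f\|_{L^1}^{2}
\]
requires, via your interpolation, exactly $2^*=\infty$: with the target value $\alpha=2/(n+2)=1/2$ the relation $1/2=\alpha+(1-\alpha)/2^*$ forces $2^*=\infty$. For any finite $2^*$ one obtains instead an exponent $2/(1-\alpha)<4$, and the Nash ODE then yields an $L^1\to L^2$ bound of order $t^{-1/\beta}$ with $\beta=2\alpha/(1-\alpha)<2$, hence after composition an $L^1\to L^\infty$ bound $\sim t^{-2/\beta}$ with $2/\beta>1$. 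This is strictly more singular than $t^{-1}$ near $t=0$ and therefore does \emph{not} produce $c_2(t)=C_2\max(t^{-1},1)$; letting $2^*\to\infty$ is of no help since the Sobolev constant $C_{2^*}$ blows up.

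The paper takes a different route in dimension $2$: it uses the Gagliardo--Nirenberg inequality of Remark~\ref{REMP}\,(3),
\[
\|g\|_{L^4}^4\le C\bigl(\|\nabla g\|_{L^2}^2\|g\|_{L^2}^2+\|g\|_{L^2}^4\bigr),
\]
together with the $p=4$ energy inequality \eqref{INEQLP} to get first $\|f(t)\|_{L^4}\le (C/t)^{1/4}\|f(0)\|_{L^2}$, and then a Moser--Nash iteration (applied to the nonnegative subsolutions $|f|^{2^{k-1}}$) to reach the sharp $L^2\to L^\infty$ bound $\|f(t)\|_{L^\infty}\le C\,t^{-1/2}\|f(0)\|_{L^2}$. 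The long-time exponential improvement and the passage to $L^1\to L^\infty$ by duality and composition are then carried out exactly as you propose.
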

We shall give a proof  suitable for any manifold that satisfies our assumptions {\bf(H1-4)}. For non-compact manifolds that enjoy a nice Fourier analysis like the hyperbolic spaces, Damek-Ricci spaces or symmetric spaces of non-compact type, such results can be obtained directly from heat kernel estimates (see \cite{AnkerJi}, \cite{APV1} and others).
  \begin{proof}
  We need to distinguish the $n\geq 3$ and $n=2$ cases due to the fact that the Sobolev embedding  of $H^1(M)$
   in $L^{2^*}(M)$ is critical in dimension $2$.
 \begin{itemize}   
\item
  We begin with the proof of the case of dimension bigger than $3$, which is more direct. We use a classical argument (see for example \cite{Zuazua}) to prove dispersive estimates for the heat equation in euclidean cases by using suitable energy estimates and Sobolev embeddings. Here we can use the Poincar\'e inequality in our argument to improve the large time decay. 
We first use \eqref{INEQLP} with $p=2$ 
$${ d \over dt }  \|f(t)\|_{L^2(M)}^2 +  2 \| \nabla f(t) \|_{L^2(M)}^2 + 2 c_0    \|f(t)\|_{L^2(M)}^2 \leq 0$$
and by combining it with Sobolev-Poincar\'e inequalities  in (2) and (4) Remark \ref{REMP}, we have
$${ d \over dt }  \|f(t)\|_{L^2(M)}^2 +  \eta_n \| f(t) \|_{L^{2^*}(M)}^2 + (\delta_n + 2 c_0)    \|f(t)\|_{L^2(M)}^2 \leq 0.$$
Since by interpolation and the decay of the $L^1$ norm (\eqref{INEQLP} with $p=1$), we have
 $$ \|f(t)\|_{L^2(M)} \leq  \|f(t)\|_{L^1(M)}^{\alpha} \|f(t)\|_{L^{2^*}(M)}^{1-\alpha} \leq C^{\alpha} \|f(t)\|_{L^{2^*}(M)}^{1-\alpha}$$ with $ \frac{1}{2}= \alpha + \frac{(1-\alpha)}{2^*}$ that is to say $\alpha = \frac{2}{n+2}$ and $C= \|f(0)\|_{L^1(M)}$, we obtain
 $${ d \over dt }  \|f(t)\|_{L^2(M)}^2 + \frac{\eta_n}{C^{\frac{2 \alpha}{1-\alpha}}} \| f(t) \|_{L^{2^*}(M)}^{\frac{2 }{1-\alpha}} + (\delta_n + 2 c_0)    \|f(t)\|_{L^2(M)}^2 \leq 0.$$
Next by setting $y(t)= \|f(t)\|_{L^2(M)}^2$, we find the following differential inequality 
$$y'(t) + (\delta_n +2 c_0) y(t) \leq - \frac{\eta_n}{C^{\frac{2 \alpha}{1-\alpha}}} y(t)^{\frac{1 }{1-\alpha}}. $$
 Then
 $$ z(t) =  y(t) e^{ (\delta_n +  2 c_0) t}$$
  solves
  $$ z' (t)  \leq  -  \frac{\eta_n}{C^{\frac{2 \alpha}{1-\alpha}}} z(t)^{\frac{1 }{1-\alpha}}  e^{- (\delta_n +  2 c_0) {\alpha \over 1 - \alpha} t}$$
  and hence by integrating, we obtain
  $$z(t) \leq \left[ \frac{C^{\frac{2 \alpha}{1-\alpha}}}{\eta_n} (\delta_n+ 2 c_0) \right]^{\frac{1-\alpha}{\alpha}} \left(1- e^{-(\delta_n +2 c_0) t \frac{\alpha}{1-\alpha}}  \right)^{\frac{\alpha-1}{\alpha}}, $$
this yields
$$y(t) \leq  \left[ \frac{C^{\frac{2 \alpha}{1-\alpha}}}{\eta_n} (\delta_n+ 2 c_0) \right]^{\frac{1-\alpha}{\alpha}} \left(e^{(\delta_n +2 c_0) t \frac{\alpha}{1-\alpha}} -1 \right)^{\frac{\alpha-1}{\alpha}}. $$
We have thus proved that 
$$ \|f(t)\|_{L^2(M)} \leq  c_n(t)^{\frac{1}{2}} e^{-t (\frac{\delta_n}{2} +c_0)} \|f_{0}\|_{L^1(M)}, \,\, \mathrm{ with }\,\,\,
   c_n(t)=  C_{n} \max{\left({1 \over   t^{n\over 2}}, 1 \right)}.$$
Therefore $e^{\frac{t}{2} (\Delta_{g} - c_0)}: L^1(M) \to L^2(M)$ with norm less than $c_n(\frac{t}{2})^{\frac{1}{2}} e^{-\frac{t}{2} (\frac{\delta_n}{2} +c_0)}$. By duality $e^{\frac{t}{2} (\Delta_{g} - c_0)}: L^2(M) \to L^\infty(M)$ with norm less than $c_n(\frac{t}{2})^{\frac{1}{2}} e^{-\frac{t}{2} (\frac{\delta_n}{2} +c_0)}$. 
Finally, since $ e^{{t} (\Delta_{g} - c_0)} = \, e^{\frac{t}{2} (\Delta_{g} - c_0)} \, e^{\frac{t}{2} (\Delta_{g} - c_0)} : L^1(M) \to L^\infty(M)$, with norm less than $c_n(\frac{t}{2}) e^{-{t} (\frac{\delta_n}{2} +c_0)}$ and
we get the desired dispersive estimate for $n \geq 3$.

\item In dimension $n=2$, we shall first prove the $L^2(M) \to L^\infty(M)$ estimate by using the Nash iteration method (see \cite{Nash}). To do so, we use \eqref{INEQLP} with $p=4$ 
$${ d \over dt }  \|f(t)\|_{L^4(M)}^4 +  3 \| \nabla( |f(t)|^2) \|_{L^2(M)}^2 + 4 c_0    \|f(t)\|_{L^4(M)}^4 \leq 0$$
By multiplying by $t$ the last inequality and by using the Gagliardo-Nirenberg and Poincar\'e inequalities (see (3) and (4) in Remark \ref{REMP}) we have
\begin{equation}\label{INEQNA}
{ d \over dt } \left( t  \|f(t)\|_{L^4(M)}^4 \right) \leq C  \| \nabla f(t) \|_{L^2(M)}^2  \| f(t) \|_{L^2(M)}^2. 
\end{equation}
Since by using \eqref{INEQLP} with $p=2$, we have
$$  \|f(t)\|_{L^2(M)}^2 \leq  \|f(0)\|_{L^2(M)}^2 \quad \mathrm{and} \quad \int_0^t  \|\nabla f(\tau)\|_{L^2(M)}^2 d\tau \leq  \|f(0)\|_{L^2(M)}^2, $$   
we obtain by integrating \eqref{INEQNA} on $[0,t]$ the following estimate
\begin{equation} \label{ITER}
 \|f(t)\|_{L^4(M)}^4 \leq \frac{C}{t}  \|f(0)\|_{L^2(M)}^4.
\end{equation}
We have also for $t>s>0$ 
$$  \|f(t)\|_{L^4(M)} \leq \left(\frac{C}{(t-s)}\right)^{\frac{1}{4}}  \|f(s)\|_{L^2(M)}.$$
Since $|f|^{2^{k-1}}$ is a non-negative sub-solution of the heat equation \eqref{HEATPROP}, we can use \eqref{ITER} with $f$ replaced by  $|f|^{2^{k-1}}.$ 
This yields 
\begin{equation*}
 \|f(t)\|_{L^{2^{k +1}}(M)} \leq \left(\frac{C}{(t-s)^{\frac{1}{4}}}\right)^{\frac{1}{2^{k-1}}}   \|f(s)\|_{L^{2^{k}}(M)}.
\end{equation*}
For every $t>0$, let us set $t_k = t\left(1-\frac{1}{k+1}\right)$; we deduce from the last inequality that
\begin{equation*}
 \|f(t_{k+1})\|_{L^{2^{k +1}}(M)} \leq \left(\frac{C (k+2)^{\frac{1}{2}}}{t^{\frac{1}{4}}}\right)^{\frac{1}{2^{k-1}}}   \|f(t_k)\|_{L^{2^{k}}(M)}.
\end{equation*}
By induction we find 
\begin{equation*}
 \|f(t_{k+1})\|_{L^{2^{k +1}}(M)} \leq \frac{C^{\left(\frac{1}{2^{k-1}}+ \frac{1}{2^{k-2}}+ \dots + {1}\right)} \left[ 
 (k+2)^{\frac{1}{2^{k}}} (k+1)^{\frac{1}{2^{k-1}}}  \cdots 2 \right] }{t^{ \left(\frac{1}{2^{k+1}} + \frac{1}{2^{k}} + \dots + \frac{1}{4} \right)}}  \|f(t_1)\|_{L^{2}(M)}.
\end{equation*}
Since, when $k \to \infty$, we have that $$t^{ \left(\frac{1}{2^{k+1}} + \frac{1}{2^{k}} + \dots + \frac{1}{4} \right)} \to t^{\frac{1}{2}},$$ that  $C^{\left(\frac{1}{2^{k-1}}+ \frac{1}{2^{k-2}}+ \dots + {1}\right)} $ and the product $  \left[ (k+2)^{\frac{1}{2^{k}}} (k+1)^{\frac{1}{2^{k-1}}}  \cdots 2  \right]$ are bounded, we get 
\begin{equation}\label{DISP2}
 \|f(t)\|_{L^\infty(M)}\leq \frac{C}{t^{\frac{1}{2}}}  \|f(0)\|_{L^2(M)}.
\end{equation}
As expected on a non-compact manifold with negative curvature, we can improve the decay in the last estimate for large times. \\
Actually, for $t>1$ by the semigroup property we can write 
$$ e^{{t} (\Delta_{g} - c_0)} = \, e^{\frac{t-1}{2} (\Delta_{g} - c_0)} \, e^{ (\Delta_{g} - c_0)} \, e^{\frac{t-1}{2} (\Delta_{g} - c_0)}. $$ 
Thanks to Proposition \ref{LpLp} we have that $e^{\frac{t-1}{2} (\Delta_{g} - c_0)}: L^2(M) \to L^2(M)$ is bounded with norm less than $e^{-\frac{t-1}{2} ({\delta_2} +c_0)}$ and that $e^{\frac{t-1}{2} (\Delta_{g} - c_0)}: L^\infty(M) \to L^\infty(M)$  is bounded with norm less than $e^{- c_0 \frac{t-1}{2} }$. 
Moreover, by \eqref{DISP2} with $t=1$, we have also that $e^{(\Delta_{g} - c_0)}: L^2(M) \to L^\infty(M)$ is bounded. 
Thus for $t>1$ we obtain 
\begin{equation}\label{DISP2Large}
 \|f(t)\|_{L^\infty(M)}\leq {C} e^{- t( \frac{\delta_2}{2} + c_0)}  \|f(0)\|_{L^2(M)}.
\end{equation}
From \eqref{DISP2}  and \eqref{DISP2Large} we get 
$$  \| f(t) \|_{L^\infty(M)} \leq  c_2(t)^{\frac{1}{2}} e^{-t (\frac{\delta_2}{2} +c_0)} \|f_{0}\|_{L^2(M)}, \,\, \mathrm{ with }\,\,\,
   c_2(t)=  C_{2} \max{\left({1 \over   t}, 1 \right)}.$$
As before, by a duality and composition argument we deduce the claimed dispersive estimate in dimension $2$
\begin{equation*}
 \|f(t)\|_{L^\infty(M)}\leq  c_2(t) e^{-t (\frac{\delta_2}{2} +c_0)}   \|f(0)\|_{L^1(M)}.
\end{equation*}

 \end{itemize} 
  \end{proof}

\noindent { \bf{End of the proof of Theorem \ref{theolin1}}.}
  Finally we prove $L^p \rightarrow L^q $ dispersive estimates for the Bochner heat equation.
  Thanks to Lemma \ref{comparaison}, it suffices to prove the corresponding estimates for the 
   Laplace Beltrami semi-group.
  To do so,  we shall use many interpolation arguments.
    First,  we can use Proposition \ref{LpLp} for $p=1$  and Proposition \ref{L1Linfty} to obtain the following estimate
    \begin{equation*}
\;\|\,e^{t (\Delta_g - c_0)} \,\|
_{L^1\to L^r} \leq   C e^{-c_0 t } e^{- \frac{ t \delta_n}{2}(1 - { 1 \over r }) } c_n(t)^{ (1 - { 1 \over r })}
\quad\forall\; r \in [1 ,+ \infty]\,\\
\end{equation*}
and  by duality we deduce that
    \begin{equation*}
\;\|\,e^{t (\Delta_g - c_0)}\,\|
_{L^p\to L^\infty} \leq   C  e^{- c_0 t } e^{- \frac{ t \delta_n}{2 p}}  c_n(t)^{ { 1 \over p }}
\quad\forall\; p \in [1 ,+ \infty]\,.
\end{equation*}
Next,  by interpolating the last estimate and  the $L^p \to L^p$ estimate in Proposition \ref{LpLp}  for $p\in[1, \infty] $, we conclude the proof  obtaining $L^p \to L^q$ estimates for $1\leq p\leq q \leq \infty $ with the norm $c_n(t)^{ \left({1\over p} - { 1 \over q } \right )}\, e^{-c_0 t} \, e^{-  t \,    \gamma_{p,q} } $, where   \, $ \gamma_{p,q}=  \frac{\delta_n}{2} \left[ \left( {1 \over p } - {1 \over q} \right) + {8 \over q} \left( 1 - { 1 \over p } \right)\right]$ and $ c_n(t)=  C_{n} \max{\left({1 \over   t^{n\over 2}}, 1 \right)}$.   
   
\subsection{Proof of Theorem \ref{theolin2}}
We shall split the proof into several Lemmas.
\begin{lemma}\label{lemmaPP}
 Assuming {\bf (H1-4)}, we have the following estimate for $p\geq 2$
 \begin{equation}\label{estimPP}
   \|  \nabla e^{t (\overrightarrow{\Delta} - I) } u_{0} \|_{L^p} \leq C \, \max \left( {1 \over \sqrt{t} }, 1\right)    \|u_{0}\|_{L^p}, \quad \forall \,\,t>0.
 \end{equation}
\end{lemma}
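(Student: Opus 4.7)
The plan is to bound $\nabla u(t)$ in $L^p$, where $u(t)=e^{t(\overrightarrow{\Delta}-I)}u_0$ solves $\partial_t u = \overrightarrow{\Delta} u - u$. I split into short time $0<t\le 1$ (genuine parabolic smoothing producing the $t^{-1/2}$ factor) and long time $t\ge 1$ (uniform bound via semigroup composition), and treat $p=2$ as the base case before bootstrapping to general $p\ge 2$.

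For $p=2$ and $t\le 1$, the classical trick works cleanly: multiplying the equation by $u$ and using the Bochner identity of Lemma \ref{bochner} with integration by parts yields
$\tfrac{1}{2}\tfrac{d}{dt}\|u\|_{L^2}^2+\|\nabla u\|_{L^2}^2+\|u\|_{L^2}^2=0$. A second energy estimate, testing against $-\overrightarrow{\Delta}u$ and using hypothesis (H1) to absorb the curvature commutators produced by $[\nabla,\overrightarrow{\Delta}]u$, shows that $\|\nabla u(t)\|_{L^2}$ is non-increasing. Multiplying the first identity by $t$, integrating, and applying monotonicity gives
$t\,\|\nabla u(t)\|_{L^2}^2\le\int_0^t\|\nabla u(s)\|_{L^2}^2\,ds\le\tfrac{1}{2}\|u_0\|_{L^2}^2$, which is \eqref{estimPP} at $p=2$.

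For $p>2$ and $t\le 1$, I view $v=\nabla u$ as a $(1,1)$-tensor and commute $\nabla$ with $\overrightarrow{\Delta}$ to derive its evolution: $\partial_t v=\overrightarrow{\Delta}_{\rm tens}v-v+\mathcal{R}(u,\nabla u)$, where $\overrightarrow{\Delta}_{\rm tens}$ denotes the Bochner Laplacian on $(1,1)$-tensors and $|\mathcal{R}(u,\nabla u)|\le K(|u|+|\nabla u|)$ thanks to (H1). Testing against $|v|^{p-2}v$ and applying the tensor Bochner identity, the Kato inequality \eqref{katoin}, and Young's inequality yields $\tfrac{d}{dt}\|v\|_{L^p}^p\le C_p(\|v\|_{L^p}^p+\|u\|_{L^p}^p)$ after absorbing the nonnegative dissipative terms. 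Introducing the weighted quantity $\phi(t):=t\,\|v\|_{L^p}^p$ and integrating $\phi'(t)\le\|v\|_{L^p}^p+t\tfrac{d}{dt}\|v\|_{L^p}^p$ over $(0,t]$ closes the estimate, provided one can bound $\int_0^1\|v(s)\|_{L^p}^p\,ds$ purely by $\|u_0\|_{L^p}^p$. This integrated $L^p$ bound on $\nabla u$ is obtained by combining the $L^p$-energy identity for $u$ (which produces the weighted dissipation $\int|u|^{p-2}|\nabla u|^2$), Kato's inequality, and the Sobolev/Gagliardo-Nirenberg inequalities of Remark \ref{REMP}, together with the $L^p$-contraction supplied by Proposition \ref{LpLp} applied via Lemma \ref{comparaison}.

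For $t\ge 1$, I use the semigroup property: $e^{t(\overrightarrow{\Delta}-I)}=e^{(t-1)(\overrightarrow{\Delta}-I)}\circ e^{\overrightarrow{\Delta}-I}$. The short-time step at $s=1$ gives $\|\nabla w\|_{L^p}\le C\|u_0\|_{L^p}$ with $w:=e^{\overrightarrow{\Delta}-I}u_0$, and then the same energy inequality for $v=\nabla e^{(t-1)(\overrightarrow{\Delta}-I)}w$ propagates the $W^{1,p}$ bound uniformly in $t-1\ge 0$, because the forcing $\mathcal{R}$ is driven by the $L^p$-decaying term in $u$. The principal obstacle in this plan is the bootstrap inside Step 2: converting the natural weighted dissipation $\int|u|^{p-2}|\nabla u|^2$ into a genuine $L^p$ control of $\nabla u$ while simultaneously keeping the curvature commutator contribution subordinate — precisely the junction where hypothesis (H1) is indispensable.
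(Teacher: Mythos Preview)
Your argument has a genuine gap at the bootstrap step for $p>2$. The quantity you need to bound, $\int_0^1\|\nabla u(s)\|_{L^p}^p\,ds$, is in general \emph{infinite} when $u_0$ is merely in $L^p$: the very estimate you are proving says $\|\nabla u(s)\|_{L^p}\sim s^{-1/2}\|u_0\|_{L^p}$ near $s=0$, so $\|\nabla u(s)\|_{L^p}^p\sim s^{-p/2}$, which is not integrable at the origin once $p>2$. Hence the identity $\phi(t)=\int_0^t\phi'(s)\,ds$ together with $\phi'(s)\leq(1+C_ps)\|v(s)\|_{L^p}^p+C_ps\|u(s)\|_{L^p}^p$ cannot close. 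The attempted rescue via the weighted dissipation $\int_M|u|^{p-2}|\nabla u|^2$ does not help either: this term and $\int_M|\nabla u|^p$ scale differently (wherever $|u|$ is small but $|\nabla u|$ is not, the weighted term is negligible while the unweighted one is large), and no Sobolev or Gagliardo--Nirenberg inequality converts one into the other without reintroducing a derivative count you do not control.

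The paper avoids this integrability obstruction altogether by working \emph{pointwise} rather than through $L^p$ energy identities. Setting $P_t=e^{t\Delta_g}$ and $Q_t=e^{t(\overrightarrow{\Delta}-I)}$, it proves the Bakry--\'Emery type bound
\[
|\nabla Q_t u_0|^2 \;\le\; \frac{1}{d(t)}\bigl(P_t(|u_0|^2)-|Q_tu_0|^2\bigr)-|Q_tu_0|^2,
\qquad \frac{1}{d(t)}=\frac{\alpha_1}{e^{2\alpha_1 t}-1},
\]
by showing that $\psi(s)=e^{-2\alpha s}P_s\bigl(|\nabla Q_{t-s}u_0|^2+|Q_{t-s}u_0|^2\bigr)$ is nondecreasing for a suitable $\alpha$; the monotonicity computation is where {\bf(H1)--(H2)} enter through the curvature commutators. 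Once this pointwise inequality is in hand, one simply takes $L^{p/2}$ norms and uses the $L^{p/2}\to L^{p/2}$ boundedness of the scalar semigroup $P_t$ (Proposition~\ref{LpLp}) to obtain \eqref{estimPP} directly for all $p\ge 2$, without ever needing to integrate $\|\nabla u(s)\|_{L^p}^p$ in time.
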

\begin{proof}
The following proof generalizes   to the vectorial Laplacian some arguments yielding log Sobolev inequalities
 for the Laplace Beltrami operator on Riemannian manifolds (see for example \cite{Bakry}). 
Let us consider $ P_{t}= e^{t \Delta_{g} }$ and  $Q_{t}= e^{t (\overrightarrow{\Delta} - I )} $.
 We shall prove the crucial pointwise estimate
\begin{equation} 
\label{bakry} | \nabla Q_{t} u_{0} |^2 \leq  {1 \over d(t)} \left(P_{t}( |u_{0}|^2) - |Q_{t} u_{0}|^2 \right) - |Q_{t} u_{0}|^2, \quad  {1 \over d(t)}=  { \alpha_1 \over (e^{2 \alpha_{1} t } - 1)},\end{equation}
with $\alpha_1 = -\left( \max(c_0, \frac{1}{c_0}) + 2 K n + 2 K n^{\frac{3}{2}} + 2 \right) <0. $\\
Since $${1 \over d(t)}=  { \alpha_1 \over {(e^{2 \alpha_{1} t } - 1)}} \leq C_1 \,
 \left \{  \begin{array}{ll} \frac{1}{t} \quad \mathrm{if} \,\, 0<t\leq 1, \\  \alpha_1  \quad \mathrm{if} \,\, t\geq 1,  \end{array} \right. $$
we obtain 
$$ | \nabla Q_{t} u_{0} |^2 \leq C_1  \max \left( \frac{1}{t}, \alpha_1 \right)   P_{t}( |u_{0}|^2) $$
which,  by integrating on $M$ and by using that $P_{t}: \,  L^{p \over 2 }\rightarrow  L^{p\over 2}$ is  bounded  for $p \geq 2$ proved in Proposition \ref{LpLp}, implies 
\begin{align}
\nonumber  \| \nabla Q_{t} u_{0} \|_{L^p}&  \leq C_{n}  \, \max \left( {1 \over \sqrt{t} }, 1\right)   \|P_{t} (|u_{0}|^2 ) \|_{L^{p \over 2}}^{1 \over 2} \\
  \label{SEPiccolo}& \leq  C_{n}  \,  \max \left( {1 \over \sqrt{t} }, 1\right)    e^{ -   { 4t \delta_n \,  (p-2) \over p^2 }  } \|u_{0}\|_{L^p}
\end{align}
which yields the proof of the Lemma.\\
It remains to prove \eqref{bakry}. We note that by using the following properties
\begin{equation}\label{propsem}
{ d \over ds } P_{s} = \Delta_{g}  P_{s} = P_{s}  \Delta_{g} \,\,\, \mathrm{and} \,\,\, \, { d \over ds } Q_{{t-s}} = - (\overrightarrow{\Delta } -I)  Q_{{t-s}},  
\end{equation}
we can write
\begin{align*}
  P_{t}\left( |u_{0}|^2 \right) - \left| Q_{t} \, u_{0}\right |^2 &= \int_{0}^t { d \over ds } \left( P_{s}\left( |Q_{t-s} \, u_{0}|^2 \right) \right) \, ds \\ &= \int_{0}^t P_{s} \left[ \left( \Delta_{g}  | Q_{{t-s}} \,u_{0} |^2   \right) - 2 g \left( \overrightarrow{\Delta}  Q_{t-s} u_{0},  Q_{t-s} \,u_{0} \right) + 
  2 |Q_{t-s} u_0|^2 \right] ds.
  \end{align*}
From  Bochner's  identity  \eqref{Bochner} in Lemma \ref{bochner} for the vector field $u = Q_{t-s}\, u_0,$ we obtain
 $$  P_{t}\left( |u_{0}|^2 \right) - \left| Q_{t} u_{0}\right |^2= 2 \int_{0}^t P_{s} \left( | \nabla Q_{t-s} u_{0}|^2 + |Q_{t-s} u_0|^2 \right)\, ds:= 2\int_{0}^t  e^{2 \alpha s}  \psi(s) \, ds.$$
 where
 $$ \psi(s) = e^{-2 \alpha  s}  \, P_{s} \left( | \nabla Q_{t-s} u_{0}|^2 + |Q_{t-s} u_0|^2 \right) \geq 0.$$
 If we can  choose the parameter  $\alpha \in \mathbb{R}$  such that $ \psi$ is nondecreasing, we  obtain
 $$ P_{t}\left( |u_{0}|^2 \right) - \left| Q_{t} \, u_{0}\right |^2\geq 2 \, \psi (0) \int_{0}^t e^{2 \alpha s}\, ds =  d(t)\, \left( | \nabla Q_{t} \, u_{0}|^2 + |Q_{t-s} u_0|^2 \right)$$ 
 and the conclusion follows.
 Finally we have to prove that: there exists $\alpha \in \mathbb{R}$  such that $ \psi'(s)\geq0$. 
  With  explicit computations and  by using again the semigroup properties  \eqref{propsem},
 we write for all $m \in M$
\begin{multline*}
 \psi'(s)_{/m}  = e^{-2 \alpha  s}  \, P_{s} \left[ -2 \alpha \left(  | \nabla Q_{t-s} u_{0}|^2 + |Q_{t-s} u_0|^2  \right) + \Delta_g  \left(  | \nabla Q_{t-s} u_{0}|^2 + |Q_{t-s} u_0|^2  \right) \right. \\ \left. - 2 g\left(\nabla ( \overrightarrow{\Delta} - I)  Q_{t-s} u_{0},  \nabla  Q_{t-s} \,u_{0} \right)  + g\left( ( \overrightarrow{\Delta} - I)  Q_{t-s} u_{0},   Q_{t-s} \,u_{0} \right)  \right]_{/m},
 \end{multline*}
by using  Bochner's  identity  \eqref{Bochner} again, we can simplify the last expression obtaining
$$ \psi'(s)_{/m}  = e^{-2 \alpha  s}  \, P_{s} \left[ B(m) \right] $$
 where
 \begin{multline*}
 B(m)= \left[ \left( \Delta_{g}  | \nabla Q_{{t-s}} \,u_{0} |^2   \right) - 2 g \left( \nabla  \overrightarrow{\Delta}   Q_{t-s} u_{0},  \nabla  Q_{t-s} \,u_{0} \right) 
  \right. 
  \\ \Bigl.- (2 \alpha+1) | Q_{{t-s}} \,u_{0} |^2 - 2 \alpha  | \nabla Q_{{t-s}} \,u_{0} |^2 \Bigr]_{/m}. 
  \end{multline*}
 By the maximum principle it is sufficient to prove that $B(m) \geq 0, \, \forall \, m \in M$.
We compute $B(m)$ for each $m \in M$ by using normal geodesic coordinates at $m$.
Let us set $e_{i}= \partial/\partial x^i= \partial_{i}$,  then $(e_{1}, \cdots, e_{n})$ is an orthonormal basis at $m$. By using the properties  \eqref{normal} of the normal coordinates at $m$ and by the connection property \eqref{levi}, we can write the first term of $B(m)$ as follows
\begin{multline*}
 \left( \Delta_{g}  | \nabla Q_{{t-s}} \,u_{0} |^2   \right)_{/m} = \partial^2_{k} \left(g^{ij} 
 g\left(  \nabla_{e_{i}}(Q_{{t-s}} \,u_{0}), \nabla_{e_{j}}(Q_{{t-s}} \,u_{0}) \right)\right)_{/m}  \\
= \left(  \partial^2_{k} g^{ij} \right)_{/m}
 g\left(  \nabla_{e_{i}}(Q_{{t-s}} \,u_{0}), \nabla_{e_{j}}(Q_{{t-s}} \,u_{0}) \right)_{/m} + 
 g^{ij}_{/m}  \partial^2_{k} \left(
 g\left(  \nabla_{e_{i}}(Q_{{t-s}} \,u_{0}), \nabla_{e_{j}}(Q_{{t-s}} \,u_{0}) \right)\right)_{/m} \\
%2 \left(\partial_{k} g^{ij} \right)_{/m}  \partial_{k} \left(
% g\left(  \nabla_{e_{i}}(Q_{{t-s}} \,u_{0}), \nabla_{e_{j}}(Q_{{t-s}} \,u_{0}) \right)\right)_{/m} = \\
= \left(  \partial^2_{k} g^{ij} \right)_{/m}
 g\left(  \nabla_{e_{i}}(Q_{{t-s}} \,u_{0}), \nabla_{e_{j}}(Q_{{t-s}} \,u_{0}) \right)_{/m} 
 +  2 g^{ij}_{/m}  
 g\left(  \nabla_{e_{k}}\nabla_{e_{i}}(Q_{{t-s}} \,u_{0}), \nabla_{e_{k}} \nabla_{e_{j}}(Q_{{t-s}} \,u_{0}) \right)_{/m} \\  +  g^{ij}_{/m}  g\left( \nabla_{e_{k}}   \nabla_{e_{k}}\nabla_{e_{i}}(Q_{{t-s}} \,u_{0}), \nabla_{e_{j}}(Q_{{t-s}} \,u_{0}) \right)_{/m} +  g^{ij}_{/m}  g\left(\nabla_{e_{i}}(Q_{{t-s}} \,u_{0}),  \nabla_{e_{k}}   \nabla_{e_{k}} \nabla_{e_{j}}(Q_{{t-s}} \,u_{0}) \right)_{/m}. 
 \end{multline*}
Thus, by using the expression \eqref{bochn} of the Bochner Laplacian and the norm on tensors \eqref{norm2} in normal coordinates at $m$, we can write
\begin{multline*}
 \left( \Delta_{g}  | \nabla Q_{{t-s}} \,u_{0} |^2   \right)_{/m} = 
 \left(  \partial^2_{k} g^{ij} \right)_{/m}
 g\left(  \nabla_{e_{i}}(Q_{{t-s}} \,u_{0}), \nabla_{e_{j}}(Q_{{t-s}} \,u_{0}) \right)_{/m}  \\
+  2  | \nabla^2 Q_{t-s} u_{0} |_{/m}^2 + 2 g \left(   \overrightarrow{\Delta}  \nabla_{e_{i}}(Q_{{t-s}} \,u_{0}),  \nabla_{e_{i}}(Q_{{t-s}} \,u_{0}) \right)_{/m}.
  \end{multline*}
Therefore 
\begin{multline*}
B(m)=
 \left(  \partial^2_{k} g^{ij} \right)_{/m}
 g\left(  \nabla_{e_{i}}(Q_{{t-s}} \,u_{0}), \nabla_{e_{j}}(Q_{{t-s}} \,u_{0}) \right)_{/m} + 2  | \nabla^2 Q_{t-s} u_{0} |_{/m}^2   \\
  + 2 g \left(   \overrightarrow{\Delta}  \nabla_{e_{i}} (Q_{{t-s}} \,u_{0}) -  \nabla_{e_{i}}  \overrightarrow{\Delta} (Q_{{t-s}} \,u_{0}),  \nabla_{e_{i}}(Q_{{t-s}} \,u_{0}) \right)_{/m} \\  - (2 \alpha+1) | Q_{{t-s}} \,u_{0} |_{/m}^2 -  2 \alpha  | \nabla Q_{{t-s}} \,u_{0} |_{/m}^2.
   \end{multline*}
Now, let us compute $  \left(\overrightarrow{\Delta}  \nabla_{e_{i}} (Q_{{t-s}} \,u_{0}) -  
\nabla_{e_{i}}  \overrightarrow{\Delta} (Q_{{t-s}} \,u_{0})\right)$ at $m$. By using \eqref{bochn} and \eqref{tensorecurv}, we have
\begin{multline*}
\nabla_{e_{i}}  \overrightarrow{\Delta} (Q_{{t-s}} \,u_{0})_{/m} = \left( \nabla_{e_{i}} (\nabla_{e_{k}} \nabla_{e_{k}}  (Q_{{t-s}} \,u_{0}) -   \nabla_{ \nabla_{e_{k}} e_{k}} (Q_{{t-s}} \,u_{0})) \right)_{/m} \\
= \left[ \nabla_{e_{k}} \nabla_{e_{i}} (\nabla_{e_{k}}  (Q_{{t-s}} \,u_{0})) - R(e_i, e_k) \nabla_{e_{k}} (Q_{{t-s}} \,u_{0}) + \nabla_{[e_{i},e_{k}]} \nabla_{e_{k}} (Q_{{t-s}} \,u_{0}) - \nabla_{e_{i}}   \nabla_{ \nabla_{e_{k}} e_{k}} (Q_{{t-s}} \,u_{0})  \right]_{/m}
    \end{multline*}
 We note that $[e_{i},e_{k}] =0$ for every point $p$ in a vicinity of $m$ and since $ \nabla_{ \nabla_{e_{k}} e_{k}} =0$ at $m$ this yields
 \begin{multline*}
  \nabla_{e_{i}}   \nabla_{ \nabla_{e_{k}} e_{k}} (Q_{{t-s}} \,u_{0})_{/m}   \\=  \left[   \nabla_{ \nabla_{e_{k}} e_{k}} ( \nabla_{e_{i}} Q_{{t-s}} \,u_{0}) - R(e_i,  \nabla_{ \nabla_{e_{k}} e_{k}}) Q_{{t-s}} \,u_{0} + \nabla_{[e_{i}, \nabla_{ \nabla_{e_{k}} e_{k}} ]}(Q_{{t-s}} \,u_{0}) \right]_{/m} = 
   \nabla_{[e_{i}, \nabla_{ \nabla_{e_{k}} e_{k}} ]}(Q_{{t-s}} \,u_{0})_{/m}  
   \end{multline*}
and applying again \eqref{tensorecurv}, we obtain
 \begin{multline*}
\nabla_{e_{i}}  \overrightarrow{\Delta} (Q_{{t-s}} \,u_{0})_{/m} = \\ \left[ \nabla_{e_{k}}\nabla_{e_{k}}\nabla_{e_{i}} (Q_{{t-s}} \,u_{0})  - \nabla_{e_{k}} \left(R(e_i,  e_k)Q_{{t-s}} \,u_{0}\right) - R(e_i,  e_k) \nabla_{e_{k}} (Q_{{t-s}} \,u_{0}) -\nabla_{[e_{i}, \nabla_{ \nabla_{e_{k}} e_{k}} ]}(Q_{{t-s}} \,u_{0}) \right]_{/m} \\ =  \overrightarrow{\Delta} \nabla_{e_{i}} (Q_{{t-s}} \,u_{0})_{/m} - [( \nabla R )(e_k,e_i,e_k)](Q_{{t-s}} \,u_{0})_{/m} \\Ê- 
2 R(e_i,e_k) \nabla_{e_{k}} (Q_{{t-s}} \,u_{0})_{/m}  -   \nabla_{[e_{i}, \nabla_{ \nabla_{e_{k}} e_{k}} ]}(Q_{{t-s}} \,u_{0})_{/m}. 
 \end{multline*}
 By using the Cristoffel symbols, we have
 $$[e_{i}, \nabla_{ \nabla_{e_{k}} e_{k}} ]_{/m} = \left[{\left(\partial_i \Gamma_{kk}^l \right) e_l}\right]_{/m}.$$
 Thus 
 \begin{multline*}
B(m)=
 \left(  \partial^2_{k} g^{ij} \right)_{/m}
 g\left(  \nabla_{e_{i}}(Q_{{t-s}} \,u_{0}), \nabla_{e_{j}}(Q_{{t-s}} \,u_{0}) \right)_{/m} + 2  | \nabla^2 Q_{t-s} u_{0} |_{/m}^2   \\ +
  2 g \left(   [( \nabla R )(e_k,e_i,e_k)](Q_{{t-s}} \,u_{0}), \nabla_{e_{i}} (Q_{{t-s}} \,u_{0})   \right)_{/m} +  
  4 g \left( R(e_i,e_k) \nabla_{e_{k}} (Q_{{t-s}} \,u_{0}), \nabla_{e_{i}} (Q_{{t-s}} \,u_{0})   \right)_{/m}  \\ +  2 \left(\partial_i \Gamma_{kk}^l \right)_{/m}  g\left(  \nabla_{e_{l}}(Q_{{t-s}} \,u_{0}), \nabla_{e_{i}}(Q_{{t-s}} \,u_{0}) \right)_{/m}   - (2 \alpha+1) | Q_{{t-s}} \,u_{0} |_{/m}^2 - 2 \alpha  | \nabla Q_{{t-s}} \,u_{0} |_{/m}^2.
   \end{multline*}
 We can rewrite the last expression with the curvature tensors. Indeed by \eqref{tensRiem}, we have 
 $$ 4 g \left( R(e_i,e_k) \nabla_{e_{k}} (Q_{{t-s}} \,u_{0}), \nabla_{e_{i}} (Q_{{t-s}} \,u_{0})   \right)_{/m} = 
 4 \Riem(e_{i}, e_{k}, \nabla_{e_{k}} (Q_{t-s} u_{0}), \nabla_{e_{i}} (Q_{t-s} u_{0}))_{/m} $$
 By \cite{Druet} (p.15) and \eqref{tensRic} 
 $$\sum_{k=1}^n 
 \left(  \partial^2_{k} g^{ij} \right)_{/m} = \frac{2}{3} \sum_{k=1}^n \Riem(e_{i}, e_{k}, {e_{j}}, e_k)_{/m} =  \frac{2}{3} \Ric(e_i,e_j)_{/m}. $$
 By using $$\Gamma_{ij}^k= \frac{1}{2} \left(g^{kl} \left( \partial_i g_{jl} + \partial_j g_{il} - \partial_l g_{ij} \right) \right)$$
 and again by \cite{Druet}, we also deduce 
  $$\sum_{k=1}^n 
  \left(  \partial_{i} \Gamma_{kk}^l  \right)_{/m} =  \sum_{k=1}^n \left( -\frac{1}{3}\Riem(e_{k}, e_{i}, {e_{l}}, e_k)  + \frac{1}{6}\Riem(e_{k}, e_{l}, {e_{k}}, e_i) + \frac{1}{6}\Riem(e_{k}, e_{i}, {e_{k}}, e_l) \right)_{/m},$$
  by the symmetry properties of Riemann tensor and  \eqref{tensRiem}, we obtain
   $$\sum_{k=1}^n \left(  \partial_{i} \Gamma_{kk}^l  \right)_{/m} =  \frac{2}{3}  \Ric(e_i,e_l)_{/m}.$$
   Thus we have
 \begin{multline*}
B(m)=   2 \Ric (e_{i}, e_{j})_{/m} \, g( \nabla_{e_{i}} (Q_{t-s} u_{0}), \nabla_{e_{j}} (Q_{t-s} u_{0}))_{/m} + 2  | \nabla^2 Q_{t-s} u_{0} |_{/m}^2 + \\  4 \Riem(e_{i}, e_{k}, \nabla_{e_{k}} (Q_{t-s} u_{0}), \nabla_{e_{i}} (Q_{t-s} u_{0}))_{/m} - 2 \alpha | \nabla Q_{{t-s}} \,u_{0} |^2_{/m}  +\\
2 g \left(   [( \nabla R )(e_k,e_i,e_k)](Q_{{t-s}} \,u_{0}), \nabla_{e_{i}} (Q_{{t-s}} \,u_{0})   \right)_{/m}  - (2 \alpha+1) | Q_{{t-s}} \,u_{0} |_{/m}^2. \end{multline*}
 By hypothesis  {\bf (H1-2)} we have 
 $$B(m) \geq   | \nabla Q_{t-s} u_{0} |_{/m}^2 \left(-2 \alpha -2 \max \left(c_0,\frac{1}{c_0}\right) - 4Kn -2K n^{\frac{3}{2}} \right) +   |  Q_{t-s} u_{0} |_{/m}^2 \left(-2 \alpha -1 - 2K n^{\frac{3}{2}} \right).$$
 We can see that $B(m)$ is positive for $\alpha \leq \alpha_1.$
 In particular, by choosing 
 $$\alpha_1= -\left(\max \left(c_0,\frac{1}{c_0}\right) +2Kn+2K n^{\frac{3}{2}}+2\right),$$ we end the proof of the Lemma.
\end{proof}
\begin{lemma}\label{lemmaPPcompl}
 Assuming {\bf (H1-4)}, we have the following estimate for $1<p<+ \infty$
 \begin{equation}\label{estimPPcompl}
   \|  \nabla e^{t (\overrightarrow{\Delta} - I) } u_{0} \|_{L^p} \leq C \, \max \left( {1 \over \sqrt{t} }, 1\right)    \|u_{0}\|_{L^p}, \quad \forall \,\, t>0.
 \end{equation}
\end{lemma}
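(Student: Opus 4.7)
Since Lemma \ref{lemmaPP} already gives the estimate for $p \geq 2$, only the range $1 < p < 2$ remains. My plan is to use duality. Because $\overrightarrow{\Delta} = -\nabla^*\nabla$ is formally self-adjoint on $L^2(\Gamma(TM))$, the semigroup $Q_t = e^{t(\overrightarrow{\Delta}-I)}$ is self-adjoint on $L^2$. Consequently, the adjoint of the operator
$$\nabla Q_t : L^p(\Gamma(TM)) \longrightarrow L^p(\Gamma(TM \otimes T^*M))$$
is the operator
$$Q_t \nabla^* : L^{p'}(\Gamma(TM \otimes T^*M)) \longrightarrow L^{p'}(\Gamma(TM)),$$
with $p' = p/(p-1) > 2$, and the two operator norms coincide. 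Hence the case $1 < p < 2$ of \eqref{estimPPcompl} is equivalent to the dual estimate
$$\|Q_t \nabla^* T\|_{L^{p'}} \leq C\,\max\!\left(\tfrac{1}{\sqrt{t}},\, 1\right) \|T\|_{L^{p'}}, \qquad p' > 2,$$
for every $(1,1)$-tensor $T$.

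To establish this dual estimate, I would reproduce the Bakry-type scheme of Lemma \ref{lemmaPP} in the tensor setting. The target is a pointwise inequality of the form
$$|Q_t \nabla^* T|^2 \;\leq\; \frac{C}{d(t)}\,P_t(|T|^2),$$
with $P_t = e^{t\Delta_g}$ and $d(t)$ as in \eqref{bakry}. Given such a bound, the $L^{p'}$ estimate follows exactly as in the last step of the proof of Lemma \ref{lemmaPP}: one takes the $L^{p'/2}$ norm of both sides (which is admissible because $p'/2 > 1$) and invokes the $L^{p'/2} \to L^{p'/2}$ boundedness of $P_t$ supplied by Proposition \ref{LpLp}, giving $\|P_t(|T|^2)\|_{L^{p'/2}} \leq \||T|^2\|_{L^{p'/2}} = \|T\|_{L^{p'}}^2$.

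To produce the pointwise bound, I would introduce the heat semigroup $\tilde Q_t$ on $(1,1)$-tensors generated by the Bochner Laplacian on tensors, suitably shifted so as to achieve the intertwining $\nabla^*\tilde Q_t = Q_t \nabla^*$ (the required shift consists of a bounded zeroth order curvature operator coming from the commutator $[\overrightarrow{\Delta},\nabla^*]$, whose boundedness is guaranteed by \textbf{(H1)}). One then defines an auxiliary function
$$\tilde\psi(s) \;=\; e^{-2\beta s}\, P_s\!\left(|\nabla \tilde Q_{t-s} T|^2 + |\tilde Q_{t-s} T|^2\right),$$
and verifies, using the Bochner identity for $(1,1)$-tensors together with the semigroup properties of $P_s$ and $\tilde Q_s$, that $\tilde\psi$ is non-decreasing on $[0,t]$ provided $\beta$ is chosen sufficiently negative. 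Monotonicity then yields the sought pointwise bound in the same way that monotonicity of $\psi$ yielded \eqref{bakry}.

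The main obstacle is the pointwise computation in normal coordinates analogous to the one carried out at the end of the proof of Lemma \ref{lemmaPP}. The Bochner identity on $(1,1)$-tensors generates additional curvature contractions, of the type $(\nabla R)\!\cdot\! \tilde Q_{t-s} T$ and $R\!\cdot\!\nabla \tilde Q_{t-s}T$, beyond those appearing in the vector field case. However, these extra terms are bounded uniformly in $m$ by the hypotheses \textbf{(H1)}--\textbf{(H2)}, in the same fashion as in the derivation of the constant $\alpha_1$, and a sufficiently negative value of $\beta$ dominates all of them, which closes the argument.
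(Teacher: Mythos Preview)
Your duality instinct is right, but the paper takes a much shorter path. It invokes the $L^p$ boundedness of the vectorial Riesz transform $\nabla(-\overrightarrow{\Delta})^{-1/2}$ (valid under \textbf{(H1--4)} by \cite{Lohoue}, \cite{Strichartz}), which gives the two-sided equivalence $\|\nabla u\|_{L^p}\sim\|(-\overrightarrow{\Delta})^{1/2}u\|_{L^p}$ for every $1<p<+\infty$. Hence estimating $\nabla Q_t$ on $L^p$ is the same as estimating $(-\overrightarrow{\Delta})^{1/2}Q_t$ on $L^p$. The latter operator is \emph{self-adjoint} (it belongs to the functional calculus of $\overrightarrow{\Delta}$ and commutes with $Q_t$), so the $L^p\to L^p$ bound for $p\ge 2$ coming from Lemma~\ref{lemmaPP} dualizes in one line to the $L^{p'}\to L^{p'}$ bound for $1<p'\le 2$. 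No tensor semigroup, no second Bakry computation.

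Your route, by contrast, has a concrete gap at the intertwining step. You assert the existence of a tensor semigroup $\tilde Q_t$ generated by $\overrightarrow{\Delta}_{\mathrm{tensor}}-I$ plus a \emph{bounded zeroth-order} curvature shift, satisfying $\nabla^*\tilde Q_t=Q_t\nabla^*$ (equivalently $\tilde Q_t\nabla=\nabla Q_t$ on vector fields). At the generator level this forces $\nabla\overrightarrow{\Delta}u-\overrightarrow{\Delta}_{\mathrm{tensor}}(\nabla u)=R_0(\nabla u)$ for some pointwise operator $R_0$ on $(1,1)$-tensors. But look at the very commutator computed inside the proof of Lemma~\ref{lemmaPP} (the $B(m)$ calculation): it contains terms of the type $[(\nabla R)(e_k,e_i,e_k)]u$ depending on $u$ itself, not on $\nabla u$. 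Such terms cannot be absorbed into a zeroth-order operator acting on $\nabla u$, so the intertwining you postulate does not exist in that form. The reason the monotonicity argument of Lemma~\ref{lemmaPP} succeeds is precisely that it does \emph{not} rely on any intertwining: the auxiliary function carries both $|\nabla Q_{t-s}u_0|^2$ and $|Q_{t-s}u_0|^2$, and the dangerous zeroth-order contributions in $u$ are controlled by the second summand. Your $\tilde\psi$, built from $\tilde Q_{t-s}T$, gives at $s=0$ a bound on $|\nabla\tilde Q_tT|$ and $|\tilde Q_tT|$; without the intertwining there is no way to pass from these to $|Q_t\nabla^*T|$, and if instead you set $u_0=\nabla^*T$ in the original $\psi$ you only recover a bound by $\|\nabla^*T\|_{L^{p'}}$, not by $\|T\|_{L^{p'}}$.
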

\begin{proof}
We have only to prove the estimate \eqref{estimPPcompl}  for $ 1<p \leq 2$. We can use that,
 for vector fields $u\in \Gamma(TM),$  $$ \|\nabla  u  \|_{L^p} \sim  \| (- \overrightarrow{\Delta})^{1 \over 2 } u \|_{L^p}, \quad 1<p<+ \infty,$$ which is essentially the $L^p$
  boundedness of the Riesz transform $\nabla  (- \overrightarrow{\Delta})^{-{1 \over 2 }}    $ (see \cite{Lohoue}, \cite{Strichartz}). Thus $ (- \overrightarrow{\Delta})^{1 \over 2 }  \, e^{ t (\overrightarrow{\Delta} -I)} : L^p  \rightarrow L^p$ satisfies \eqref{estimPP}  for $2 \leq  p <+\infty$ and 
  by duality  $ \left( (- \overrightarrow{\Delta})^{1 \over 2 }  e^{ t (\overrightarrow{\Delta} -I)}\right)^* =  e^{ t \overrightarrow{(\Delta}-I) }
  (- \overrightarrow{\Delta})^{1 \over 2 } =   (- \overrightarrow{\Delta})^{1 \over 2 }  e^{ t (\overrightarrow{\Delta}-I) } :L^{p'}  \rightarrow L^{p'},$
   we obtain \eqref{estimPPcompl}   for $ 1 <p' \leq 2$. 
\end{proof}
 We shall now establish short time $L^p \to L^p$ estimates for the operator $\nabla e^{ t (\overrightarrow{\Delta} +r)}.$
 \begin{lemma}\label{lemmaPPshorttime}
 Assuming {\bf (H1-4)}, we have the following estimate for $1<p<+ \infty$
 \begin{equation}\label{estimPPshorttime}
   \|  \nabla e^{t (\overrightarrow{\Delta} +r )} u_{0} \|_{L^p} \leq C \, \max \left( {1 \over \sqrt{t} }, 1\right)    \|u_{0}\|_{L^p}, \quad \forall \,\, 0<t\leq1.
 \end{equation}
 \end{lemma}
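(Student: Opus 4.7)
My plan is to derive \eqref{estimPPshorttime} from Lemma \ref{lemmaPPcompl} by a perturbative (Duhamel) argument, treating the Ricci operator $r$ plus the identity as a bounded perturbation of $\overrightarrow{\Delta} - I$. Concretely, since assumption \textbf{(H2)} gives $|r(u)|_g \leq \frac{1}{c_0}|u|_g$ pointwise, the operator $r + I$ is bounded on $L^p$ with norm at most $1 + \frac{1}{c_0}$, and $(\overrightarrow{\Delta} + r) - (\overrightarrow{\Delta} - I) = r + I$.

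Differentiating $s \mapsto e^{(t-s)(\overrightarrow{\Delta} - I)} e^{s(\overrightarrow{\Delta} + r)} u_0$ and integrating from $0$ to $t$ yields Duhamel's formula
\begin{equation*}
e^{t(\overrightarrow{\Delta} + r)} u_0 = e^{t(\overrightarrow{\Delta} - I)} u_0 + \int_0^t e^{(t-s)(\overrightarrow{\Delta} - I)} (r + I) e^{s(\overrightarrow{\Delta} + r)} u_0 \, ds.
\end{equation*}
Applying $\nabla$ and the triangle inequality in $L^p$, the first term is controlled by Lemma \ref{lemmaPPcompl} as $C \max(1/\sqrt{t},1) \|u_0\|_{L^p}$. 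For the integral term, I apply Lemma \ref{lemmaPPcompl} to $\nabla e^{(t-s)(\overrightarrow{\Delta} - I)}$ acting on $(r+I) e^{s(\overrightarrow{\Delta} + r)} u_0$, then use the $L^p$-boundedness of $r + I$ together with the $L^p \to L^p$ boundedness of $e^{s(\overrightarrow{\Delta}+r)}$ (which follows from Theorem \ref{theolin1} taking $p = q$, giving a bound uniform in $s$). This produces
\begin{equation*}
\|\nabla e^{t(\overrightarrow{\Delta}+r)} u_0\|_{L^p} \leq C\, \max\!\left(\tfrac{1}{\sqrt{t}},1\right) \|u_0\|_{L^p} + C \int_0^t \max\!\left(\tfrac{1}{\sqrt{t-s}},1\right) \|u_0\|_{L^p}\, ds.
\end{equation*}
For $0 < t \leq 1$ the integral is dominated by $\int_0^t (t-s)^{-1/2} ds = 2\sqrt{t} \leq 2$, which is absorbed into the $\max(1/\sqrt{t},1)$ factor, giving the claimed bound.

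The only subtle point is justifying the Duhamel identity rigorously; one argues first on smooth compactly supported data (dense in $L^p$ by Remark \ref{REMP}(1)) where both semigroups act classically, and extends by density using the uniform $L^p \to L^p$ bounds established above. Everything else is routine once one observes that \textbf{(H2)} precisely ensures $r + I$ is a bounded zeroth-order operator, so no curvature cancellation argument beyond what is already used in Lemma \ref{lemmaPP} is needed at this stage.
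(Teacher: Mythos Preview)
Your argument is correct and follows essentially the same Duhamel-perturbation strategy as the paper. The only cosmetic difference is that the paper first passes from $e^{t(\overrightarrow{\Delta}-I)}$ to $e^{t\overrightarrow{\Delta}}$ (via the trivial identity $\nabla e^{t\overrightarrow{\Delta}} = e^{t}\,\nabla e^{t(\overrightarrow{\Delta}-I)}$) and then perturbs by $r$, whereas you perturb directly around $\overrightarrow{\Delta}-I$ using $r+I$ as the bounded zeroth-order term; both routes invoke Lemma~\ref{lemmaPPcompl} and the $L^p\!\to\!L^p$ bound from Theorem~\ref{theolin1} in the same way.
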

\begin{proof}
 Since $$\nabla e^{t \overrightarrow{\Delta} } = e^t \, \nabla e^{t (\overrightarrow{\Delta} -I)},$$
 we deduce from Lemma \ref{lemmaPPcompl} that
  \begin{equation}
   \|  \nabla e^{t \overrightarrow{\Delta} } u_{0} \|_{L^p} \leq C \, \max \left( {1 \over \sqrt{t} }, 1\right)    \|u_{0}\|_{L^p}
 \end{equation}
 for $ 0<t\leq1$ and  $1<p<+ \infty$.
 By using the Duhamel formula, we have
 $$ \nabla e^{t (\overrightarrow{\Delta} +r )} u_{0} = \nabla e^{t \overrightarrow{\Delta} } u_{0} +
  \int_0^t \nabla e^{(t-s) \overrightarrow{\Delta} }\left(r \,(e^{s(\overrightarrow{\Delta} +r )} u_{0})\right) ds,$$
 thus we obtain
  $$   \| \nabla e^{t (\overrightarrow{\Delta} +r )} u_{0}  \|_{L^p} \leq  \| \nabla e^{t \overrightarrow{\Delta} } u_{0}  \|_{L^p} + 
  \int_0^t  \| \nabla e^{(t-s) \overrightarrow{\Delta} }\left(r \,(e^{s(\overrightarrow{\Delta} +r )} u_{0})\right)   \|_{L^p} ds$$
 hence
 $$\leq C \, \max \left( {1 \over \sqrt{t} }, 1\right)    \|u_{0}\|_{L^p} + C \int_0^t \max \left( {1 \over \sqrt{t-s} }, 1\right) 
   \| r \,(e^{s(\overrightarrow{\Delta} +r )} u_{0})  \|_{L^p} ds. $$
 By hypothesis  {\bf (H2)} and \eqref{dispersiveestimates}, we have
  \begin{equation*}
  \| r \,(e^{s(\overrightarrow{\Delta} +r )} u_{0})  \|_{L^p} \leq C \max\left(c_0,\frac{1}{c_0}\right) 
\| e^{t ( \overrightarrow{\Delta} + r)} u_{0} \|_{L^p} \leq C \max\left(c_0,\frac{1}{c_0}\right) \, e^{-  t  ( \gamma_{p,q} + c_0) } \, \| u_{0} \|_{L^p}.
\end{equation*}
 Thus we obtain 
 \begin{equation*}
   \|  \nabla e^{t (\overrightarrow{\Delta} +r )} u_{0} \|_{L^p} \leq C \, \max \left( {1 \over \sqrt{t} }, 1\right)    \|u_{0}\|_{L^p}
   \end{equation*}
    for short time $0<t \leq 1$ and $1<p<+ \infty.$
 \end{proof}
By previous $L^p \to L^p$ estimates \eqref{estimPPshorttime} for the operator $\nabla  e^{t (\overrightarrow{\Delta} +r )}$ and by duality argument, the following estimates are true for $1<p<+\infty$ and short time $0<t \leq 1$ 
 \begin{equation}\label{eststarshort}
   \| e^{t (\overrightarrow{\Delta} +r) } \nabla^{*}  T_{0} \|_{L^p} \leq C \, \max \left( {1 \over \sqrt{t} }, 1\right)    \|T_{0}\|_{L^p}, 
 \end{equation}
 for tensors $T_0 \in L^p(\Gamma(TM \otimes T¬^{*}M)).$
 To get large time estimates, we use the semi-group property combined with  last estimates at $t=1$ and $L^p \to L^p$ dispersive estimates \eqref{dispersiveestimates}
 $$e^{t (\overrightarrow{\Delta} +r) } \nabla^{*}= e^{(t-1) (\overrightarrow{\Delta} +r) } \left(e^{(\overrightarrow{\Delta} +r) }  \nabla^{*} \right): L^p \rightarrow L^p \rightarrow L^p.$$  
This yields for $1<p<+ \infty$ and $t \geq 1$
 \begin{equation}\label{eststarlarge}
   \| e^{t (\overrightarrow{\Delta} +r) } \nabla^{*}  T_{0} \|_{L^p} \leq C \, e^{-t \,\left(c_0+ {4 \delta_n \over p} \left( 1 - { 1 \over p } \right)\right)}    \|T_{0}\|_{L^p},
 \end{equation}
  for all $T_0 \in L^p(\Gamma(TM \otimes T¬^{*}M)).$
From \eqref{eststarshort}  and \eqref{eststarlarge} we deduce for all time $t>0$ and $1<p<+ \infty$
 \begin{equation}
   \| e^{t (\overrightarrow{\Delta} +r) } \nabla^{*}  T_{0} \|_{L^p} \leq C \,  \max \left( {1 \over \sqrt{t} }, 1\right)  \, e^{-t \,\left(c_0+ {4 \delta_n \over p} \left( 1 - { 1 \over p } \right)\right)}   \|T_{0}\|_{L^p}.
 \end{equation}
   for all $T_0 \in L^p(\Gamma(TM \otimes T¬^{*}M)).$
 Finally, by duality again we finish the proof of Theorem \ref{theolin2} obtaining   smoothing estimates \eqref{smoothingestimates}  for vector fields.
 
 \subsection{The case of the Stokes equations  }
 In this section, we shall consider the following Stokes type linear equations
 \begin{equation}\label{NSprew1}
 \left\{ \begin{array}{ll}   \partial_t u  -  \overrightarrow{\Delta} u - r(u)  + 2 \grad  (-\Delta_g)^{-1}\dive (ru) = 0 ,\\  \dive {u_0}= 0, \\   u_{|t=0}= u_{0}, \quad u_0 \in \Gamma(TM).
 \end{array} \right.
  \end{equation}
 It will be convenient to use the linear operator
 $$ Bu = - 2 \grad  (-\Delta_g)^{-1}\dive (ru).$$
 Note that thanks to the boundedness of the Riesz transform on a manifold that satisfies the assumptions {\bf(H1-4)}
  (see again \cite{Lohoue}), $B$ is bounded as a linear operator  $L^p \to L^p$ for every $p$, $ 1<p<+\infty$.
  First we shall prove the following dispersive estimates for small time:
\begin{prop}
 \label{propB1}
Assuming {\bf (H1-4)},  the solution of \eqref{NSprew1} satisfies the following dispersive estimates:
 for every   $p,q$ such that $ 1 < p \leq q <  +\infty, $ there exists $C>0$ such that 
\begin{equation}\label{dispersiveestimatesBSmall}
\|  u(t) \|_{L^q} \leq C c_n(t)^{ \left({1\over p} - { 1 \over q } \right )}  \, \| u_{0} \|_{L^p} \quad \forall \,\, 0< t \leq 2, \,\,\forall \, u_{0} \in L^p(\Gamma(TM)),
\end{equation}
 with  $ c_n(t)=  \max \big({1 \over  t^{n\over 2}}, 1 \big)$.
 \end{prop}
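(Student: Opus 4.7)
My plan is to treat the Stokes equation \eqref{NSprew1} as a perturbation of the vectorial heat equation \eqref{Bochnerequation}. Writing the equation as
\begin{equation*}
\partial_t u = (\overrightarrow{\Delta} + r)u - Bu,
\end{equation*}
Duhamel's formula yields
\begin{equation*}
u(t) = e^{t(\overrightarrow{\Delta}+r)}u_0 - \int_0^t e^{(t-s)(\overrightarrow{\Delta}+r)} B u(s)\, ds.
\end{equation*}
The key observation, already recorded before the statement, is that $B = -2\grad(-\Delta_g)^{-1}\dive\circ r$ is bounded $L^p \to L^p$ for every $1<p<+\infty$ (under assumptions (H1--4) the Riesz transform is $L^p$-bounded, and $r$ is a bounded tensorial multiplier by (H2)). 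So I intend to transfer the dispersive estimates from Theorem \ref{theolin1} to $u$ by estimating the Duhamel integral and running an induction on the gain of integrability.

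The first step is the diagonal case $p=q$, obtained cleanly by Gronwall. Using the $L^p\to L^p$ bound of Theorem \ref{theolin1} together with the boundedness of $B$ on $L^p$, Duhamel gives
\begin{equation*}
\|u(t)\|_{L^p} \le C\|u_0\|_{L^p} + C\int_0^t \|u(s)\|_{L^p}\,ds,
\end{equation*}
so $\|u(t)\|_{L^p}\le C e^{Ct}\|u_0\|_{L^p} \le C\|u_0\|_{L^p}$ for $t\in [0,2]$, which establishes the claim at $p=q$ (here $c_n(t)^0=1$).

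The second step is a bootstrap of the integrability exponent. Assume inductively that for some exponent $p_i\in[p,q]$ we already know
\begin{equation*}
\|u(s)\|_{L^{p_i}} \le C\, c_n(s)^{1/p - 1/p_i}\|u_0\|_{L^p}, \qquad 0<s\le 2.
\end{equation*}
For any $p_{i+1}\in(p_i,q]$ with $\frac{1}{p_i}-\frac{1}{p_{i+1}}<\frac{2}{n}$, I apply Theorem \ref{theolin1} inside the Duhamel integral with exponents $(p_i,p_{i+1})$ for the nonhomogeneous term and with exponents $(p,p_{i+1})$ for the linear term, and use the $L^{p_i}$-boundedness of $B$:
\begin{equation*}
\|u(t)\|_{L^{p_{i+1}}} \le C\, c_n(t)^{\frac{1}{p}-\frac{1}{p_{i+1}}}\|u_0\|_{L^p} + C\int_0^t c_n(t-s)^{\frac{1}{p_i}-\frac{1}{p_{i+1}}}\, c_n(s)^{\frac{1}{p}-\frac{1}{p_i}}\, ds\,\|u_0\|_{L^p}.
\end{equation*}
For $0<t\le 2$ the integral behaves like the Beta-type convolution $\int_0^t (t-s)^{-\alpha}s^{-\beta}\,ds$ with $\alpha=\frac{n}{2}(\frac{1}{p_i}-\frac{1}{p_{i+1}})<1$ and $\beta=\frac{n}{2}(\frac{1}{p}-\frac{1}{p_i})<1$ (both conditions enforced along the induction), giving $\sim t^{1-\alpha-\beta} = t\cdot c_n(t)^{\frac{1}{p}-\frac{1}{p_{i+1}}}\lesssim c_n(t)^{\frac{1}{p}-\frac{1}{p_{i+1}}}$ on $[0,2]$. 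This closes the induction at step $i+1$.

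Since $\frac{1}{p}-\frac{1}{q}\le 1$, choosing a finite sequence $p = p_0 < p_1 < \dots < p_N = q$ with gaps strictly less than $2/n$ yields the estimate \eqref{dispersiveestimatesBSmall} in finitely many iterations. The main obstacle — and the reason the statement is restricted to $0<t\le 2$ — is the non-integrability of $c_n(t-s)^{1/p-1/q}$ when $\frac{1}{p}-\frac{1}{q}\ge \frac{2}{n}$, which forces the stepwise bootstrap rather than a one-shot Duhamel bound; incidentally this is also why one cannot chase a sharp large-time exponential decay through this particular argument and must restrict to short times, deferring the large-time regime to a separate step (via the semigroup property applied at $t=1$).
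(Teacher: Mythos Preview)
Your diagonal step $p=q$ via Gronwall is exactly the paper's, but the bootstrap has a genuine gap. You assert that $\beta=\frac{n}{2}\bigl(\frac{1}{p}-\frac{1}{p_i}\bigr)<1$ is ``enforced along the induction,'' yet this exponent is cumulative: it tracks the \emph{total} gap $\frac{1}{p}-\frac{1}{p_i}$, not the incremental gap $\frac{1}{p_{i-1}}-\frac{1}{p_i}$. So while you may choose each step $\frac{1}{p_i}-\frac{1}{p_{i+1}}<\frac{2}{n}$ to keep $\alpha<1$, the condition $\beta<1$ forces $\frac{1}{p}-\frac{1}{p_i}<\frac{2}{n}$ for every $i$, which simply prevents $p_i$ from ever reaching any $q$ with $\frac{1}{p}-\frac{1}{q}\geq\frac{2}{n}$. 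In that regime your Beta integral $\int_0^t(t-s)^{-\alpha}s^{-\beta}\,ds$ diverges at $s=0$ and the inductive step collapses.

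The paper circumvents this by splitting the Duhamel integral at $t/2$ rather than iterating: on $[0,t/2]$ one uses the already established uniform $L^p$ bound on $u(\tau)$ together with the $L^p\to L^q$ semigroup estimate, and since $t-\tau\ge t/2$ there the kernel $c_n(t-\tau)^{1/p-1/q}$ is harmless (bounded by $c_n(t)^{1/p-1/q}$); on $[t/2,t]$ one uses only the $L^q\to L^q$ semigroup bound, leaving $\|u(\tau)\|_{L^q}$ as the unknown. Setting $y(t)=c_n(t)^{-(1/p-1/q)}\|u(t)\|_{L^q}$ and noting that $c_n(\tau)\lesssim c_n(t)$ for $\tau\in[t/2,t]$, one obtains $y(t)\le C\|u_0\|_{L^p}+C\int_0^t y(\tau)\,d\tau$ and closes by Gronwall in one shot. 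Your induction can be repaired by inserting this same splitting at each step (use the $L^p$ bound on $[0,t/2]$ and the $L^{p_i}$ bound on $[t/2,t]$), but once you have that device the induction is superfluous.
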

\begin{proof}
We begin the proof with the case $p=q$. 
By using the Duhamel formula we can write 
$$u(t)= e^{t ( \overrightarrow{\Delta} + r)} u_0 + \int_0 ^t  e^{(t -\tau) ( \overrightarrow{\Delta} + r)} Bu(\tau) \, d\tau.$$
Thanks to  \eqref{dispersiveestimates} in Theorem \ref{theolin1} with $p=q$, we have 
$$
\|u(t)\|_{L^q} \leq C \|u_0\|_{L^q} +  \int_0 ^{t} C \|u(\tau) \|_{L^q} d \tau 
$$
and hence from the Gronwall inequality, we find
\begin{equation}
\label{LqestB}
\|u(t)\|_{L^q} \leq C  e^{C t}\|u_0\|_{L^q}, \quad \forall t\geq0.
\end{equation}
Note that the large time behavior is not  good and will be improved later.\\
 Next, thanks to \eqref{dispersiveestimates} in Theorem \ref{theolin1}, we obtain
 \begin{align*}
  \|u(t)\|_{L^q}  & \leq c_n(t)^{ \left({1\over p} - { 1 \over q } \right )} e^{-  t  ( \gamma_{p,q} + c_0) }  \|u_0\|_{L^p}  +  \\
&  \qquad  \int_0 ^{t/2} \left[c_n(t- \tau)\right]^{ \left({1\over p} - { 1 \over q } \right )} e^{-  (t- \tau)  ( \gamma_{p,q} + c_0) }  \|u(\tau)\|_{L^p}  d\tau + C  \int_{t/2}^t  \|u(\tau) \|_{L^q} d \tau \\
 &  \leq  Cc_n(t)^{ \left({1\over p} - { 1 \over q } \right )}   \|u_0\|_{L^p}  +     C c_n(t)^{ \left({1\over p} - { 1 \over q } \right )}  \int_0 ^{t/2}   \|u(\tau)\|_{L^p}  d\tau + C  \int_{t/2}^t  \|u(\tau) \|_{L^q} d \tau
  \end{align*}
by using  the  estimate \eqref{LqestB}, we deduce the following estimate for  $0<t \leq 2$
%$$ \leq c_n(t)^{ \left({1\over p} - { 1 \over q } \right )} e^{-  t  ( \gamma_{p,q} + c_0) }  \|u_0\|_{L^p}  +  
% c_n(t)^{ \left({1\over p} - { 1 \over q } \right )} e^{-  t  ( \gamma_{p,q} + c_0 -C ) }  \|u_0\|_{L^p}   + C  \int_{t/2}^t  \|u(\tau) \|_{L^q} d \tau 
$$ \|u(t)\|_{L^q}  \leq  Cc_n(t)^{ \left({1\over p} - { 1 \over q } \right )}   \|u_0\|_{L^p}  +  
    C  \int_{t/2}^t  \|u(\tau) \|_{L^q} d \tau $$
     and hence
by setting $y(t) =  c_n(t)^{- \left({1\over p} - { 1 \over q } \right )}   \|u(t)\|_{L^q}$,  
we get
$$ y(t) \leq C   \|u_0\|_{L^p}  + {C \over  c_n(t)^{ \left({1\over p} - { 1 \over q } \right )}} \int_{t\over 2}^t   
c_n(\tau)^{ \left({1\over p} - { 1 \over q } \right )} y(\tau)\, d\tau \leq C   \|u_0\|_{L^p} + C \int_0^t  y(\tau)\, d\tau$$
 and by  the Gronwall inequality we can conclude.
\end{proof}

 \begin{theorem}
\label{theolinB1}
Assuming {\bf (H1-4)},  there exist $\beta \geq c_0 >0$ such that the solution of  the Cauchy problem \eqref{NSprew1} satisfies the following  estimates 
\begin{equation}\label{BLpestimates}
\| u(t)  \|_{L^p} \leq \, C e^{-  \beta t } \, \left(\| u_{0} \|_{L^p}+ \| u_{0} \|_{L^2}\right), \quad \forall \,\,t>0,
\end{equation}
for every   $p$ such that $ 2 \leq p  <  +\infty $, for all $u_{0} \in L^p(\Gamma(TM)) \cap L^2(\Gamma(TM))$ and some $C>0$.
\end{theorem}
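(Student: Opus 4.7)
The plan is to exploit the fact that \eqref{NSprew1} still admits a clean $L^2$ energy inequality, and then to upgrade the $L^2$ decay to $L^p$ decay by combining it with the short-time dispersive bound of Proposition~\ref{propB1} via the semigroup property.

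I would first verify that the divergence-free constraint is propagated by the flow: taking $\dive$ of \eqref{NSprew1} and using $\dive\,\grad\,(-\Delta_g)^{-1}\dive(ru) = \Delta_g(-\Delta_g)^{-1}\dive(ru) = -\dive(ru)$, together with the identity $\dive(\overrightarrow{\Delta}u) = \dive(ru)$ valid whenever $\dive u = 0$ (a consequence of \eqref{WEZ} already used in the derivation of \eqref{press}), one sees that $\partial_t \dive u = 0$, hence $\dive u(t) = 0$ for all $t \ge 0$. Pairing \eqref{NSprew1} with $u$ in $L^2$ then makes the non-local gradient term disappear, since
$$\int_M g(\grad f, u)\, dvol = -\int_M f\,\dive u\, dvol = 0$$
once $\dive u = 0$. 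Using the Bochner identity \eqref{Bochner} to integrate $g(\overrightarrow{\Delta} u, u)$ by parts and invoking \textbf{(H2)} on the Ricci contribution, I would arrive at
$$\tfrac{1}{2}\tfrac{d}{dt}\|u\|_{L^2}^2 + \|\nabla u\|_{L^2}^2 + c_0 \|u\|_{L^2}^2 \le 0,$$
hence the clean decay $\|u(t)\|_{L^2} \le e^{-c_0 t}\|u_0\|_{L^2}$ for every $t \ge 0$.

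To control the $L^p$ norm with $p \ge 2$, I would split the time axis at $t = 2$. For $0 < t \le 2$ the short-time bound \eqref{dispersiveestimatesBSmall} of Proposition~\ref{propB1} with $p = q$ gives $\|u(t)\|_{L^p} \le C\|u_0\|_{L^p}$, and since $e^{-c_0 t} \ge e^{-2c_0}$ on $(0, 2]$ this rewrites as $C' e^{-c_0 t}\|u_0\|_{L^p}$. For $t \ge 2$ I would use linearity and the semigroup property to write $u(t) = S(1)\,u(t-1)$, where $S$ denotes the solution semigroup of \eqref{NSprew1}; applying \eqref{dispersiveestimatesBSmall} in the form $L^2 \to L^p$ at time $s = 1$ (so that the factor $c_n(1)^{1/2 - 1/p}$ is a harmless constant) gives $\|u(t)\|_{L^p} \le C \|u(t-1)\|_{L^2}$, which combined with the $L^2$ decay above yields $\|u(t)\|_{L^p} \le C e^{c_0} e^{-c_0 t}\|u_0\|_{L^2}$. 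Taking the maximum of the two bounds delivers the claim with $\beta = c_0$.

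The key structural input, and the main obstacle to overcome, is the cancellation of the non-local Helmholtz-type correction $2\grad(-\Delta_g)^{-1}\dive(r u)$ in the $L^2$ scalar product against divergence-free fields: although this operator is only $L^p \to L^p$ bounded through the Riesz transform (hence is not in general compatible with exponential large-time decay by itself), its orthogonality to $\ker(\dive)$ is exactly what allows one to recover an exponential rate identical to that of the pure Bochner heat equation. This orthogonality is also the reason the statement is restricted to $p \ge 2$: the bootstrap needs the $L^2$ energy estimate as its starting point, and the norm $\|u_0\|_{L^2}$ cannot be eliminated from the final bound.
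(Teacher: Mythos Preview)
Your proposal is correct and follows essentially the same route as the paper: establish the $L^2$ energy decay by exploiting the orthogonality of the gradient correction $2\grad(-\Delta_g)^{-1}\dive(ru)$ to divergence-free fields, then bootstrap to $L^p$ for large times via the semigroup property combined with the short-time $L^2\to L^p$ bound of Proposition~\ref{propB1}, and cover small times with the $L^p\to L^p$ case of the same proposition. The paper's argument is identical in structure, only slightly less explicit about the propagation of $\dive u=0$ and about the value $\beta=c_0$.

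One small point of rigor: your justification that $\dive u(t)\equiv 0$ invokes the identity $\dive(\overrightarrow{\Delta}u)=\dive(ru)$, which already presupposes $\dive u=0$ and is therefore circular as stated. The clean way is to use the unconditional identity $\dive(\overrightarrow{\Delta}u)=\Delta_g(\dive u)+\dive(ru)$ (an immediate consequence of the Weitzenb\"ock formula \eqref{WEZ} and the commutation of $d^*$ with $\Delta_H$); then taking $\dive$ of \eqref{NSprew1} yields a scalar heat equation for $\dive u$ with zero initial data, hence $\dive u\equiv 0$. This is exactly what the paper does later in Section~\ref{sectionLeray} for the full nonlinear system, and it fixes the only loose thread in your write-up.
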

\begin{proof}
 In the case $p=2$, the above estimate is a direct consequence of the energy estimate for the Stokes equations \eqref{NSprew1}. 
 Indeed, multiplying $u$ in \eqref{NSprew1} and then integrating on $M$  by part combining with the Bochner identity \eqref{bochner}, we have 
 the following energy estimate
 \begin{equation}\label{ENB}
 \|u(t)\|_{L^2}^2 + \int_{0}^t\big( \| \nabla u (s) \|_{L^2}^2  +  c_0 \,  \|u(s) \|_{L^2}^2 \big)\, ds \leq  \|u_{0}\|_{L^2}^2.
\end{equation}
 
  For $p>2$, the $L^p \to L^p$ type estimate that we used previously does not yield a good result for large times due to the additional
   term $Bu$ that does not vanish.  
  In the following, we shall use an argument that  relies on the $L^2 \rightarrow L^p$ dispersive estimate. This is the reason for which we also need the initial data to be in $L^2$.
%      It is maybe possible to relax
   %     this assumption by using  different techniques as it is done for the Stokes problem in exterior domains of 
     %   $\mathbb{R}^n$ for example. 
  By  dispersive estimates \eqref{dispersiveestimatesBSmall}, we have
     \begin{equation}
 \|u(t)\|_{L^p} \leq c_n(t)^{\left(\frac{1}{2}-\frac{1}{p}\right)} e^{- \beta t}   \|u_{0}\|_{L^2}, \quad \forall 0<t \leq 2, \,\,  p\geq2.
\end{equation}
     We use the semi-group property combined with the last estimate  at $t=1$ and the $L^2 \to L^2$ estimates \eqref{BLpestimates} for $t>1$, then
$$ e^{t (\overrightarrow{\Delta}+r -B) } = \, e^{(\overrightarrow{\Delta}+r -B) } \, (e^{{(t-1)} (\overrightarrow{\Delta} +r-B)} ): L^2 \rightarrow L^2 \rightarrow L^p$$
is bounded and  we have 
 \begin{equation}
 \|u(t)\|_{L^p} \leq C e^{- \beta t}   \|u_{0}\|_{L^2}, \quad \forall t\geq 2 \,\, \mathrm{and} \,\, p\geq2.
\end{equation}
Combining the  last estimate and \eqref{dispersiveestimatesBSmall} with $p=q$, we deduce
\begin{equation}
\| u(t)  \|_{L^p} \leq \, C e^{-  \beta t } \, \left(\| u_{0} \|_{L^p}+ \| u_{0} \|_{L^2}\right),
 \quad \forall t> 0 \,\, \mathrm{and} \,\, p\geq2.
\end{equation}
 \end{proof}
 
   \begin{corollary}
   Assuming {\bf (H1-4)},  there exist $\beta \geq c_0 >0$ such that the solution of  the Cauchy problem \eqref{NSprew1} satisfies the following  dispersive estimates 
\begin{equation}\label{BLpqestimates}
\| u(t)  \|_{L^q} \leq \, C   c_n(t)^{ \left({1\over p} - { 1 \over q } \right )}  e^{-  \beta t } \, \left(\| u_{0} \|_{L^p}+ \| u_{0} \|_{L^2}\right), \quad \forall \,\,t>0,
\end{equation}
for every   $p,q$ such that $ 2 \leq p \leq q <  +\infty, $ 
 for all $u_{0} \in L^p(\Gamma(TM)) \cap L^2(\Gamma(TM))$ and some $C>0 $ and
\begin{equation}\label{BLpqestimates2}
\| u(t)  \|_{L^q} \leq \, C   c_n(t)^{ \left({1\over p} - { 1 \over q } \right )}  e^{-  \beta t } \, \| u_{0} \|_{L^p} , \quad \forall \,\,t>0,
\end{equation} 
for every   $p,q$ such that $ 1< p \leq 2 \leq q <  +\infty, $ 
 for all $u_{0} \in L^p(\Gamma(TM)) $ and some $C>0, $
 with  $ c_n(t)=  \max \big({1 \over  t^{n\over 2}}, 1 \big)$.
     \end{corollary}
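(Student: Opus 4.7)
The plan is to split the time range into small times $0<t\leq 2$ and large times $t>2$, handling each by combining Proposition \ref{propB1} with Theorem \ref{theolinB1} and the semigroup property of the Stokes evolution. A useful preliminary observation is that $c_n(t)=1$ for $t\geq 2^{2/n}$, in particular for $t>2$, so in the large time regime the prefactor $c_n(t)^{(1/p-1/q)}$ is harmless and only the exponential decay rate matters.

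For small times $0<t\leq 2$, Proposition \ref{propB1} yields directly
$$\|u(t)\|_{L^q}\leq C\, c_n(t)^{(1/p-1/q)}\,\|u_0\|_{L^p},$$
which, since $e^{-\beta t}$ is bounded below on $[0,2]$, already gives both \eqref{BLpqestimates} and \eqref{BLpqestimates2} for this range. So the real work is only for $t>2$, where I would exploit the semigroup identity for the Stokes evolution $e^{tL}$ (with $L=\overrightarrow{\Delta}+r-B$) to factor the solution operator into pieces, each controlled by one of the previously proven estimates.

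For \eqref{BLpqestimates} (case $2\leq p\leq q<+\infty$), I would write $e^{tL}=e^{(t-1)L}e^{L}$ and first apply Proposition \ref{propB1} at time $1$ to obtain $\|e^{L}u_0\|_{L^q}\leq C\|u_0\|_{L^p}$. Then Theorem \ref{theolinB1} applied to the initial datum $v_0:=e^{L}u_0$ on the time interval of length $t-1$ gives $\|e^{(t-1)L}v_0\|_{L^q}\leq Ce^{-\beta(t-1)}(\|v_0\|_{L^q}+\|v_0\|_{L^2})$. The $L^q$-norm of $v_0$ has already been controlled by $\|u_0\|_{L^p}$; for the $L^2$-norm of $v_0$, I invoke the $L^2$ case ($p=q=2$) of Theorem \ref{theolinB1} which yields $\|e^{L}u_0\|_{L^2}\leq C\|u_0\|_{L^2}$. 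Combining and using $c_n(t)=1$ on $t>2$ produces the desired bound.

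For \eqref{BLpqestimates2} (case $1<p\leq 2\leq q<+\infty$), the strategy is to eliminate the $\|u_0\|_{L^2}$ dependence by factoring through $L^2$. I would use the three-step decomposition $e^{tL}=e^{L}\,e^{(t-2)L}\,e^{L}$ and estimate each factor separately: first $e^{L}:L^p\to L^2$ using Proposition \ref{propB1} (with exponent pair $(p,2)$, $p\leq 2$), next $e^{(t-2)L}:L^2\to L^2$ using the $L^2$ case of Theorem \ref{theolinB1} which contributes the exponential decay $Ce^{-\beta(t-2)}$, and finally $e^{L}:L^2\to L^q$ using Proposition \ref{propB1} again (with exponent pair $(2,q)$, $q\geq 2$). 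Chaining these and absorbing constants gives $\|u(t)\|_{L^q}\leq Ce^{-\beta t}\|u_0\|_{L^p}$, and again $c_n(t)^{(1/p-1/q)}=1$ on $t>2$ lets us insert this prefactor for free, matching \eqref{BLpqestimates2}. The only mildly subtle point is the matching of constants across the split at $t=2$: on the small-time side the dispersive factor $c_n(t)^{(1/p-1/q)}$ is present and on the large-time side it equals $1$, so a single constant $C$ (depending on the fixed lifespan $[0,2]$ via the exponential) controls both regimes. No genuinely new obstacle arises; the argument is an interpolation between the small-time dispersive estimate and the $L^2$ energy decay.
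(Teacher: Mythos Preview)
Your argument is correct.  For \eqref{BLpqestimates} it is essentially the paper's proof: both split into small times (handled directly by Proposition~\ref{propB1}) and large times (handled by the semigroup identity $e^{tL}=e^{(t-1)L}e^{L}$ together with Theorem~\ref{theolinB1} and Proposition~\ref{propB1} at $t=1$); you merely apply the two factors in the opposite order, which costs you the extra step of bounding $\|e^{L}u_0\|_{L^2}$ by $\|u_0\|_{L^2}$, but this is immediate.

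For \eqref{BLpqestimates2} you take a genuinely different route.  The paper first deduces an $L^2\to L^q$ estimate (the case $p=2$ of \eqref{BLpqestimates}), then invokes duality to obtain the $L^{p}\to L^2$ bound for $p\le 2$, and finally uses the two–step splitting $e^{tL}=e^{(t/2)L}e^{(t/2)L}$.  You instead use the three–step factorization $e^{tL}=e^{L}\,e^{(t-2)L}\,e^{L}$, passing through $L^2$ at both ends via Proposition~\ref{propB1} and picking up the exponential decay from the $L^2\to L^2$ case of Theorem~\ref{theolinB1}.  Your approach is more elementary in that it avoids duality altogether; this is a real advantage here, since $L=\overrightarrow{\Delta}+r-B$ is not obviously self-adjoint (the perturbation $B=-2\,\grad(-\Delta_g)^{-1}\dive\circ r$ has the Ricci factor on one side only), so the duality step in the paper tacitly requires that the adjoint semigroup obey the same $L^2\to L^q$ bounds.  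The paper's route, on the other hand, yields the estimate with the correct $c_n(t)$ prefactor uniformly in $t$ in a single stroke, whereas you treat small and large times separately; but since $c_n(t)=1$ for $t>2$, this costs nothing.
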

  \begin{proof}
By using the semi-group property combined with  dispersive estimates \eqref{dispersiveestimatesBSmall}  at $t=1$ and the $L^2 \cap L^p \to L^p$ estimates \eqref{BLpestimates} for $t>1$
$$ e^{t (\overrightarrow{\Delta}+r -B) } = \, e^{(\overrightarrow{\Delta}+r -B) } \, (e^{{(t-1)} (\overrightarrow{\Delta} +r-B)} ): L^2 \cap L^p \rightarrow  L^p \rightarrow L^q$$
is bounded and we have that   
 \begin{equation}
 \|u(t)\|_{L^q} \leq C e^{- \beta t}   \left(\| u_{0} \|_{L^p}+ \| u_{0} \|_{L^2}\right)
  \end{equation}
  for $t\geq 2$ and for every   $p,q$ such that $ 2 \leq p \leq q <  +\infty.$
Combining the  last estimate and \eqref{dispersiveestimatesBSmall} in Proposition \ref{propB1}, we deduce \eqref{BLpqestimates}.

To prove \eqref{BLpqestimates2}, we note that   \eqref{BLpqestimates}  yields a $L^2 \rightarrow L^p$ estimate valid for all positive
times and for $p\geq 2$. By duality, we deduce   the $ L^{p'} \rightarrow L^2$ estimate and we finally get \eqref{BLpqestimates2}
by using the semigroup property $ e^{t (\overrightarrow{\Delta}+r -B) } =  e^{{t\over2} (\overrightarrow{\Delta}+r -B) } 
e^{{t\over2} (\overrightarrow{\Delta}+r -B) }$.

 \end{proof}

\begin{prop}
 \label{propB2}
Assuming {\bf (H1-4)},  the solution of \eqref{NSprew1} satisfies the following smoothing estimates 
\begin{equation}\label{Bsmoothingestimates}
 \| \nabla u (t) \|_{L^p } \leq { C { \frac{1}{\sqrt{t}}}}  \| u_0 \|_{L^p} \quad \forall \,\, 0< t \leq 2,
 \end{equation}
for every $ 1 < p \leq q   < + \infty$ and for all $u_{0} \in L^p(\Gamma(TM)).$
\end{prop}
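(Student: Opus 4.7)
The plan is to reduce the smoothing estimate for the Stokes system \eqref{NSprew1} to the smoothing estimate \eqref{smoothingestimates} for the Bochner heat semigroup established in Theorem \ref{theolin2}, treating the nonlocal linear term $Bu = -2\grad(-\Delta_g)^{-1}\dive(ru)$ perturbatively. Since \eqref{NSprew1} reads $\partial_t u = (\overrightarrow{\Delta}+r)u + Bu$, Duhamel's formula gives
$$u(t) = e^{t(\overrightarrow{\Delta}+r)}u_{0} + \int_{0}^{t} e^{(t-s)(\overrightarrow{\Delta}+r)} \, Bu(s)\, ds,$$
and applying $\nabla$ yields two contributions to $\|\nabla u(t)\|_{L^p}$ that I would estimate separately.

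For the homogeneous part, Theorem \ref{theolin2} gives directly
$$\|\nabla e^{t(\overrightarrow{\Delta}+r)} u_{0}\|_{L^p} \leq C \max\!\left(\tfrac{1}{\sqrt{t}}, 1\right) \|u_{0}\|_{L^p} \leq \frac{C}{\sqrt{t}}\,\|u_{0}\|_{L^p}$$
for $0<t\leq 2$ and $1<p<+\infty$. For the Duhamel term, I would use three ingredients in sequence: (i) Theorem \ref{theolin2} to gain the factor $\max(1/\sqrt{t-s},1) \leq C/\sqrt{t-s}$ on a bounded time interval; (ii) the $L^p\to L^p$ boundedness of $B$ for $1<p<+\infty$, which follows from the boundedness of the Riesz transform on $M$ under \textbf{(H1--4)} (Lohoué, Strichartz) together with hypothesis \textbf{(H2)} on the Ricci operator; and (iii) the short-time $L^p\to L^p$ bound for the Stokes semigroup from Proposition \ref{propB1} with $p=q$, namely $\|u(s)\|_{L^p} \leq C\|u_{0}\|_{L^p}$ for $0<s\leq 2$. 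Combining these,
$$\left\|\int_{0}^{t} \nabla e^{(t-s)(\overrightarrow{\Delta}+r)} Bu(s)\, ds\right\|_{L^p} \leq C\int_{0}^{t} \frac{1}{\sqrt{t-s}}\,\|u(s)\|_{L^p}\, ds \leq C\sqrt{t}\,\|u_{0}\|_{L^p},$$
and since $\sqrt{t}\leq 2/\sqrt{t}$ for $0<t\leq 2$, this is absorbed into $C\|u_{0}\|_{L^p}/\sqrt{t}$, completing the bound.

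I do not anticipate a genuine obstacle here: the proposition is essentially a perturbative consequence of the heat-equation smoothing already obtained. The only delicate point to verify is that the nonlocal operator $B$ is indeed bounded on $L^p$ for every $1<p<+\infty$ --- this rests on the $L^p$ boundedness of $\grad(-\Delta_g)^{-1}\dive$, which factors through two Riesz transforms and hence requires the full strength of assumptions \textbf{(H1--4)}; the boundedness of $r$ on $L^p$ is immediate from \textbf{(H2)}. Once this is in hand, the $\sqrt{t-s}$-type singularity in the time integral is integrable and produces exactly the claimed short-time smoothing rate $1/\sqrt{t}$.
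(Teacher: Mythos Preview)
Your proposal is correct and follows essentially the same approach as the paper: Duhamel's formula, Theorem~\ref{theolin2} for the smoothing of the Bochner semigroup, the $L^p$ boundedness of $B$ (which the paper invokes implicitly, having recorded it earlier in the section), and Proposition~\ref{propB1} with $p=q$ to control $\|u(\tau)\|_{L^p}$. The paper's proof is in fact slightly more terse than yours, compressing steps (i)--(iii) into a single line, but the argument is identical.
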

\begin{proof}
By using the Duhamel formula we can write 
$$ \nabla u(t)=  \nabla e^{t ( \overrightarrow{\Delta} + r)} u_0 + \int_0 ^t   \nabla  e^{(t -\tau) ( \overrightarrow{\Delta} + r)} Bu(\tau) \, d\tau.$$
Thanks to  \eqref{smoothingestimates} in Theorem \ref{theolin2}, we have 
\begin{equation}\label{LqestB2}
\|  \nabla u(t)\|_{L^p} \leq C  { \frac{1}{\sqrt{t}}}  \|u_0\|_{L^p} +  \int_0 ^{t} C { \frac{1}{\sqrt{t-\tau}}}  \|u(\tau) \|_{L^p} d \tau 
\end{equation}
and by using \eqref{dispersiveestimatesBSmall} we find
$$\| \nabla u(t)\|_{L^p} \leq C  { \frac{1}{\sqrt{t}}} \|u_0\|_{L^p}, \quad \forall 0<t \leq 2.$$
\end{proof}

 \begin{theorem}
Assuming {\bf (H1-4)}, there exist $\beta \geq c_0 >0$ such that the solution of  the Cauchy problem \eqref{NSprew1} satisfies the following  estimates 
\begin{equation}\label{BLSpestimates}
\| \nabla u(t)  \|_{L^q} \leq \, C \,  c_n(t)^{ \left({1\over p} - { 1 \over q } + {1 \over n} \right )} \,  e^{-  \beta t } \, \left(\| u_{0} \|_{L^p}+ \| u_{0} \|_{L^2}\right), \quad \forall \,\,t>0,
\end{equation}
 for every   $p,q$  such that $  2 \leq p \leq q <  +\infty, $   for all $u_{0} \in L^p(\Gamma(TM)) \cap L^2(\Gamma(TM))$ and some $C>0$ and
\begin{equation}\label{BLSpqestimates2}
\| \nabla u(t)  \|_{L^q} \leq \, C   c_n(t)^{ \left({1\over p} - { 1 \over q } + {1 \over n} \right )}  e^{-  \beta t } \, \| u_{0} \|_{L^p}, \quad \forall \,\,t>0,
\end{equation} 
for every   $p,q$ such that $  1< p \leq 2 \leq q <  +\infty, $ 
 for all $u_{0} \in L^p(\Gamma(TM))$ and some $C>0, $
 with  $ c_n(t)=  \max \big({1 \over  t^{n\over 2}}, 1 \big)$.
 \end{theorem}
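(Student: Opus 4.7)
The plan is to derive these $L^p \to W^{1,q}$ smoothing estimates for the Stokes semigroup by combining the short-time smoothing bound \eqref{Bsmoothingestimates} of Proposition \ref{propB2} with the dispersive estimates \eqref{BLpqestimates} and \eqref{BLpqestimates2} of the preceding corollary, via the semigroup property. This mimics the proof of Corollary \ref{corlin} for the Bochner heat semigroup, but now the additional linear term $Bu = -2\,\grad(-\Delta_g)^{-1}\dive(ru)$ forces us to work in two separate time regimes. Throughout I denote the semigroup of \eqref{NSprew1} by $S(t) = e^{t(\overrightarrow{\Delta}+r-B)}$.

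For short times $0 < t \leq 2$, I factor $S(t) = S(t/2) \circ S(t/2)$ and write
$$\nabla u(t) = \nabla S(t/2) \,[\, S(t/2) u_0 \,].$$
For the inner operator I apply \eqref{BLpqestimates} when $2 \leq p \leq q < \infty$, or \eqref{BLpqestimates2} when $1 < p \leq 2 \leq q < \infty$, which gives an $L^q$-bound of order $c_n(t/2)^{1/p-1/q} e^{-\beta t/2}$ times $\|u_0\|_{L^p} + \|u_0\|_{L^2}$ (respectively $\|u_0\|_{L^p}$). For the outer operator I apply the short-time smoothing estimate of Proposition \ref{propB2} between $L^q$ and $W^{1,q}$, which on the interval $0 < t/2 \leq 1$ contributes the factor $C/\sqrt{t/2}$. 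The elementary identity $t^{-1/2}\, t^{-(n/2)(1/p - 1/q)} = t^{-(n/2)(1/p - 1/q + 1/n)}$ matches precisely the $c_n(t)^{1/p - 1/q + 1/n}$ factor in the statement, and the exponential $e^{-\beta t/2}$ is harmless on $[0,2]$.

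For large times $t > 2$, I use the alternative splitting
$$\nabla u(t) = \nabla S(1) \,[\, S(t-1) u_0 \,].$$
The first factor is controlled by Proposition \ref{propB2} at $t = 1$, which yields $\|\nabla S(1) v\|_{L^q} \leq C \|v\|_{L^q}$ for any $1 < q < \infty$. The second factor is controlled by \eqref{BLpqestimates} (respectively \eqref{BLpqestimates2}), giving $\|S(t-1) u_0\|_{L^q} \leq C e^{-\beta(t-1)} (\|u_0\|_{L^p} + \|u_0\|_{L^2})$ (respectively without the $L^2$ norm). Since $c_n(t) = C_n$ is constant for $t \geq 1$, the resulting bound can be absorbed into the form of the statement up to a slightly diminished $\beta$ (still $\geq c_0$).

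The main obstacle, which is more bookkeeping than analysis, is the bifurcation into two ranges of exponents. In the regime $p \geq 2$ the perturbation $B$ is handled in the underlying dispersive estimate via the passage through $L^2$ (as in Theorem \ref{theolinB1}), so the $L^2$ norm of $u_0$ inevitably appears on the right-hand side of \eqref{BLSpestimates}; in the regime $1 < p \leq 2 \leq q$ the $L^2$ ingredient is already absorbed inside \eqref{BLpqestimates2}, which was obtained by a duality and semigroup composition argument. Once this distinction is tracked consistently in the two time regimes, the two claimed smoothing estimates follow without any further genuinely new ingredient.
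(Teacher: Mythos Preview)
Your proposal is correct and follows essentially the same strategy as the paper: split into short and large times, and in each regime factor the Stokes semigroup as a dispersive piece composed with the short-time smoothing bound of Proposition~\ref{propB2} (at $t/2$ for $0<t\le 2$, at $t=1$ for $t>2$). The only cosmetic difference is that for the inner dispersive factor at short times the paper cites \eqref{dispersiveestimatesBSmall} directly, whereas you invoke the all-time estimates \eqref{BLpqestimates}/\eqref{BLpqestimates2}; since the latter are valid for all $t>0$ and already contain the needed $L^2$ contribution, this changes nothing.
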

\begin{proof}
  We use the semi-group property combined with  smoothing estimates \eqref{Bsmoothingestimates}  at $t=1$ and previous estimates \eqref{BLpqestimates} for $t>1$, then
$$  e^{t (\overrightarrow{\Delta}+r -B) } = \,  e^{(\overrightarrow{\Delta}+r -B) } \, (e^{{(t-1)} (\overrightarrow{\Delta} +r-B)} ): L^2 \cap L^p \rightarrow L^q \rightarrow W^{1,q}$$
is bounded and  we have 
 \begin{equation}\label{Bsmo}
 \| \nabla u(t)\|_{L^q} \leq C    c_n(t-1)^{ \left({1\over p} - { 1 \over q } \right )}  e^{- \beta (t-1)}   \left(\| u_{0} \|_{L^p}+ \| u_{0} \|_{L^2}\right), \quad \forall t\geq 2 \,\, \mathrm{and} \,\, 2 \leq p \leq q <  +\infty.
\end{equation}
 By using again the semi-group property combined with  smoothing estimates \eqref{Bsmoothingestimates}  and dispersive estimates \eqref{dispersiveestimatesBSmall} for short time $0<t\leq 2$, we have that
$$  e^{t (\overrightarrow{\Delta}+r -B) } = \,  e^{ \frac{t}{2}(\overrightarrow{\Delta}+r -B) } \, (e^{{\frac{t}{2}} (\overrightarrow{\Delta} +r-B)} ):   L^p \rightarrow L^q \rightarrow W^{1,q}$$
is bounded and  we obtain
 \begin{equation*}
 \| \nabla u(t)\|_{L^q} \leq C    c_n(t)^{ \left({1\over p} - { 1 \over q } + {1 \over n} \right )}   \| u_{0} \|_{L^p}, \quad \forall \, 0<t\leq 2 \,\, \mathrm{and} \,\,  1 < p \leq q <  +\infty.
\end{equation*}
Combining the  last estimate and \eqref{Bsmo}, we can conclude the proof of \eqref{BLSpestimates}.

To prove \eqref{BLSpqestimates2}, we use the same splitting, the only difference is that for the large time estimate we use \eqref{BLpqestimates2} in place of \eqref{BLpqestimates}. 
\end{proof}

\section{Fujita-Kato theorems on manifolds}\label{sectionKato}

\subsection{ Strong solutions for Navier-Stokes on  Einstein manifolds with negative curvature}\label{katofixed}

We will first  restrict  our attention to the case of non-compact Riemannian manifolds $M$ for which the Ricci tensor $\Ric$ is a negative constant scalar multiple $r$ of the metric. 
By using \eqref{NLdiv}, in this case is more convenient to rewrite  the  nonlinear Cauchy problem for $u(t, \cdot) \in \Gamma( T M)$ in the following way  
\begin{equation}\label{NSnon}
 \left\{ \begin{array}{ll} \partial_{t} u  -\left(\overrightarrow{\Delta} u  +r u \right)= -    \mathbb{P}(\dive \, ( u \otimes u) ), \quad \mathbb{P}v=  v +  \grad (-\Delta_{g})^{-1} \dive v, \\
   u_{|t=0}= u_{0}, \, \dive u_{0}= 0.
   \end{array} \right.
\end{equation}
We recall the definition of well-posedness:
\begin{definition}
 the  Cauchy problem is  locally well-posed on a Banach space $X$  if for any bounded subset $B$ of $X$, there exists  $T>0$ and 
 a Banach space $X_{T}$ continuously contained into $ \mathcal{C}([0, T], X)$ such that:
 \begin{itemize}
 \item[i)] for any Cauchy data  $u_{0}(x) \in B$, {\eqref{NSnon}} has a unique solution $u(t,x) \in X_{T}$;
 \item[ii)] the flow map $ u_{0} \in B \rightarrow u(t,x) \in X_{T}$ is continuous. 
 \end{itemize}  
 We say that the problem is  globally well-posed if these properties hold for $T=+\infty$.
  \end{definition}
  
\begin{theorem} ({\bf{ Well-posedness on $M$}}) \label{Theorconstant}
For every {$u_{0}\in L^n (\Gamma(T M))$}, with $\dive u_{0}=0$, there exists $T>0$ and a {unique solution $u$}  of
 the incompressible Navier-Stokes equations  such that {$u\in \mathcal{C}([0, T], L^n(\Gamma(T M)))\cap X_T$}.\\
Moreover, there exists $\delta>0$ such that if {$\|u_{0}\|_{L^n(\Gamma(T M))} \leq \delta $} then the above solutions are {global in time}.\\
In { dimension $2$}, the solutions are {global for large data}.
\end{theorem}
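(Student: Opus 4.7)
The plan is the classical Fujita--Kato fixed-point scheme, adapted to the manifold setting by using the dispersive estimate \eqref{dispersiveestimates} and the smoothing estimate \eqref{smononl} in place of the usual Euclidean heat kernel bounds. A simplifying feature of the Einstein hypothesis is that $ru = \lambda u$ for a constant $\lambda$, so $\dive(ru) = 0$ on divergence-free fields; hence the non-local term in \eqref{NSprew} vanishes, and Duhamel's formula recasts \eqref{NSnon} as the fixed-point equation
$$ u(t) = e^{t(\overrightarrow{\Delta}+r)} u_{0} - \int_{0}^{t} e^{(t-s)(\overrightarrow{\Delta}+r)} \mathbb{P}\dive(u\otimes u)\, ds =: \Phi(u)(t). $$

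I would work in a Kato-type space adapted to the critical scaling in $L^{n}$. Fix any exponent $q \in (n, 2n]$, set $\sigma = \frac{1}{2}-\frac{n}{2q} \in (0, \frac{1}{2})$, and let
$$ X_{T} = \Bigl\{ u \in \CC([0,T], L^{n}) : t^{\sigma} u \in \CC((0,T], L^{q}),\ \lim_{t \to 0^{+}} t^{\sigma} \|u(t)\|_{L^{q}} = 0 \Bigr\} $$
normed by $\|u\|_{X_{T}} = \sup_{[0,T]} \|u\|_{L^{n}} + \sup_{(0,T]} t^{\sigma} \|u\|_{L^{q}}$. Theorem \ref{theolin1} applied with exponents $(n,n)$ and $(n,q)$ shows that $e^{t(\overrightarrow{\Delta}+r)} u_{0} \in X_{T}$ with norm at most $C\|u_{0}\|_{L^{n}}$; density of $L^{n} \cap L^{q}$ in $L^{n}$ ensures that the high-regularity piece $\sup_{(0,T]} t^{\sigma} \|e^{t(\overrightarrow{\Delta}+r)} u_{0}\|_{L^{q}}$ tends to $0$ as $T \to 0^{+}$ for any fixed $u_{0}$.

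The heart of the argument is the bilinear estimate $\|B(u,v)\|_{X_{T}} \leq C_{0} \|u\|_{X_{T}} \|v\|_{X_{T}}$, where $B(u,v)$ denotes the integral term in $\Phi$ with $u\otimes u$ replaced by the symmetrised tensor product. I would combine three ingredients: first, the smoothing estimate \eqref{smononl} applied with input exponent $p = q/2$ and output $q$, producing a factor $c_{n}(t-s)^{1/q + 1/n}$; second, the same estimate with input $p = q/2$ and output $n$ (which requires $q/2 \leq n$, and is precisely why we impose $q \leq 2n$), giving a factor $c_{n}(t-s)^{2/q}$; third, the $L^{p}$-boundedness of $\mathbb{P}$ for $1 < p < +\infty$, which follows from the Riesz-transform bounds \cite{Lohoue,Strichartz}, together with the bilinear H\"older inequality $\|u\otimes v\|_{L^{q/2}} \leq \|u\|_{L^{q}} \|v\|_{L^{q}}$. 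The two resulting time integrals are of Beta type and have the correct exponents: the first yields a $t^{-\sigma}$ blow-up that closes the $L^{q}$-piece of the norm, while the second is uniform in $t$ and closes the $L^{n}$-piece. This is exactly the point at which the critical scaling of $L^{n}$ is used.

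Once both bounds are in place, Picard iteration produces a unique fixed point of $\Phi$ in a small closed ball of $X_{T}$: for any $u_{0} \in L^{n}$ one gets a local time $T > 0$ from the smallness of the high-regularity part of the linear evolution, and for $\|u_{0}\|_{L^{n}} \leq \delta$ the exponential time decay in \eqref{dispersiveestimates} lets one take $T = +\infty$. When $n = 2$, global existence for arbitrary data follows by iteration of the local result: the energy inequality \eqref{EN} (obtained via Bochner's identity and hypothesis (H2)) yields $\sup_{t \geq 0} \|u(t)\|_{L^{2}} \leq \|u_{0}\|_{L^{2}}$, and since $L^{n} = L^{2}$ in this case, the local existence time at each restart is controlled from below in terms of this a priori bound. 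The main obstacle is the bilinear estimate: matching the algebraic constraints from \eqref{smononl} while ensuring that the Beta-type time integrals have scaling-critical exponents is what pins down the admissible window $n < q \leq 2n$, and choosing $q$ in this window is essential for closing the scheme.
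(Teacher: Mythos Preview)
Your local and small-data global arguments are essentially the paper's: the same Duhamel formulation, the same use of the smoothing estimate \eqref{smononl} combined with the $L^p$-boundedness of $\mathbb{P}$, and the same abstract fixed-point lemma. The paper chooses $X_T$ slightly differently, retaining only the weighted $L^q$-piece (with an exponential weight $e^{\beta t}$ that absorbs the large-time decay from \eqref{dispersiveestimates} and makes $T=+\infty$ automatic for small data), and then recovers $u\in\mathcal{C}([0,T],L^n)$ a posteriori; your decision to build the $L^n$-norm directly into $X_T$ is what forces the additional constraint $q\le 2n$, but this is harmless.

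There is, however, a genuine gap in your two-dimensional argument. You assert that the energy bound $\sup_{t}\|u(t)\|_{L^2}\le\|u_0\|_{L^2}$ gives a lower bound on the local existence time at each restart. It does not: $L^2$ is the \emph{critical} space when $n=2$, and in your own scheme the local time $T$ is determined by how fast $t^{\sigma}\|e^{t(\overrightarrow{\Delta}+r)}u_0\|_{L^q}$ tends to $0$, which depends on the profile of $u_0$ and not merely on $\|u_0\|_{L^2}$. There is no uniform lower bound on $T$ over an $L^2$-ball, so the iteration cannot be closed this way.

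The paper confronts exactly this obstruction and supplies the missing ingredient. By the smoothing effect the solution lies in $H^1$ for any positive time; since $H^1$ is \emph{subcritical} in dimension $2$, one can continue the solution via a fixed point in $H^1$ with existence time controlled solely by the $H^1$-norm. The required a priori $H^1$-bound is obtained from the scalar vorticity equation: applying $d$ to the Navier--Stokes equation written on $1$-forms and identifying $du^\flat$ with a scalar $\omega$ yields
\[
\partial_t\omega + d\omega(u) = \Delta_g\omega + 2r\omega,
\]
from which (using $\dive u=0$ and $r<0$) one gets $\|\omega(t)\|_{L^2}\le\|\omega(s)\|_{L^2}$ for $t\ge s$. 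Together with the Riesz-transform estimate $\|\nabla u\|_{L^2}\lesssim\|du^\flat\|_{L^2}+\|u\|_{L^2}$ and the energy inequality, this closes the $H^1$-bound and gives global existence for arbitrary data.
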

The Banach space $X_{T}$ will be defined below.
\begin{proof}
We have to solve the  fixed point problem
$$ u(t) =  e^{t  (\overrightarrow{\Delta} +r)} u_{0} - \int_{0}^t   e^{(t-\tau)  (\overrightarrow{\Delta}+r) }   \mathbb{P}(\dive ( u \otimes u) )(\tau)\, d\tau
 = u_{1} + B(u,u)(t).
$$ 
We use the following classical variant of the  Banach fixed point Theorem :
\begin{lemma}\label{lemmafixed}
 Consider $X$  a Banach space and $B$ a bilinear operator such that
 {$$ \forall u, \, v \in X, \quad \|B(u,v)\|_{X} \leq \gamma \|u\|_{X}\, \|v\|_{X}, $$}
 then, for every $u_{1}\in X$, such that  {$ 4 \gamma \|u_{1}\|_{X}<1$,} the sequence defined by 
 $$  u_{n+1} =u_{1}+ B(u_{n}, u_{n}), \quad u_{0}=0$$
 converges to {the unique solution} of 
 {$$ u=  u_{1} + B(u,u)$$}
  such that {$2 \gamma  \|u\|_{X} <1$.}
\end{lemma}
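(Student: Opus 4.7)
The plan is to prove the lemma by the standard Picard iteration argument for contracting bilinear maps, working entirely within a closed ball of $X$. First I would pick the radius $R$ to be the smaller root of $R = \|u_1\|_X + \gamma R^2$, i.e. $R = \frac{1 - \sqrt{1 - 4\gamma \|u_1\|_X}}{2\gamma}$, which is real and positive by the assumption $4\gamma \|u_1\|_X < 1$; crucially, this $R$ satisfies $2\gamma R < 1$.

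Next, I would prove by induction that $\|u_n\|_X \leq R$ for all $n$. The base case $u_0 = 0$ is trivial, and the inductive step uses the bilinear estimate
\begin{equation*}
\|u_{n+1}\|_X \leq \|u_1\|_X + \gamma \|u_n\|_X^2 \leq \|u_1\|_X + \gamma R^2 = R.
\end{equation*}
Then I would show the sequence is Cauchy via the telescoping identity
\begin{equation*}
u_{n+1} - u_n = B(u_n, u_n) - B(u_{n-1}, u_{n-1}) = B(u_n - u_{n-1}, u_n) + B(u_{n-1}, u_n - u_{n-1}),
\end{equation*}
which together with the a priori bound gives $\|u_{n+1} - u_n\|_X \leq 2\gamma R \|u_n - u_{n-1}\|_X$. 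Since $2\gamma R < 1$, this is a strict contraction on differences, so $(u_n)$ is Cauchy and converges to some $u \in X$ with $\|u\|_X \leq R$. Continuity of $B$ on bounded sets (which follows from bilinearity and the estimate) lets me pass to the limit in $u_{n+1} = u_1 + B(u_n, u_n)$ to obtain the fixed point equation $u = u_1 + B(u,u)$, and the bound $2\gamma \|u\|_X \leq 2\gamma R < 1$ places $u$ in the claimed ball.

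For uniqueness in $\{v \in X : 2\gamma \|v\|_X < 1\}$, suppose $u$ and $v$ are two such solutions. Subtracting and applying the same bilinear splitting yields
\begin{equation*}
\|u - v\|_X \leq \gamma \bigl(\|u\|_X + \|v\|_X\bigr)\|u - v\|_X,
\end{equation*}
and since $\gamma(\|u\|_X + \|v\|_X) < 1$ by hypothesis, this forces $\|u - v\|_X = 0$. There is no serious obstacle here; the only subtle point is choosing the correct radius $R$ so that the quadratic estimate closes and so that the contraction factor $2\gamma R$ stays strictly below $1$, which is exactly what the threshold $4\gamma \|u_1\|_X < 1$ is engineered to provide.
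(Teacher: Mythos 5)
Your proof is correct and complete: the choice of $R$ as the smaller root of $R=\|u_1\|_X+\gamma R^2$, the inductive bound $\|u_n\|_X\leq R$, the contraction on differences via the bilinear splitting, and the uniqueness argument in the ball $2\gamma\|v\|_X<1$ all close without gaps. The paper itself gives no proof of this lemma (it is invoked as a classical variant of the Banach fixed point theorem underlying Kato's scheme), and your argument is exactly the standard one it implicitly relies on.
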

We notice that this is the Kato's scheme, which consist in finding a  family of spaces $(X_T)_{T>0}$ such that the bilinear operator $B$ maps $X_T \times X_T$ into $X_T$ continuously. This will produce automatically local or global well-posedeness result.
 Then to prove the {continuity of $B(u,u)$ on $X_{T}$}, we use this Lemma  with 
{$$ X_{T}= \left\{ u \in L^\infty_{loc}([0, T], L^q(\Gamma(T M)) | \,   c_n(t)^{-({1 \over n} - {1 \over q } ) }  e^{\beta t}\|u(t)\|_{L^q} \in L^\infty(0,T) \right\}$$}
for some  {$n<q<+\infty$} and $\beta$ adapted to the large time decay rate of our dispersive estimates.
We recall that $ c_n(t)=  C_{n} \max{\left({1 \over   t^{n\over 2}}, 1 \right)}$. We notice that the $\overrightarrow{\Delta} +r$ has the property of commuting with the projection $\mathbb{P}$  as long as  $M$ has no boundary. Using this fact, we  write thanks to the $L^p$ boundedness of the Riesz transform (see \cite{Lohoue})
and our smoothing estimates \eqref{smononl}
\begin{align*}
 \|B(u,u)(t)\|_{L^q}  & \leq C \int_{0}^t { c_n(t-\tau)^{ {1 \over n} + {1 \over q}  } }e^{- \beta (t-\upsilon)} \| u \otimes u (\tau)\|_{L^{q \over 2 }} \, ds  \\ 
& \leq  C\int_{0}^t {  c_n(t-\tau)^{ {1 \over n} + {1 \over q}  } }e^{- \beta (t-\tau)}  \left( {e^{- \beta \tau}  c_n(\tau)^{  {1 \over n} - {1 \over q}}}\right)^2 \, d\tau \| u \|_{X_{T}}^2.
\end{align*}
Consequently, we obtain for any {$q>n$}
$$  \|B(u,u)\|_{X_{T}} \leq C  \| u \|_{X_{T}}^2. $$
By using the Lemma \ref{lemmafixed}, we get  a { solution in $X_{T}$ \, if  \, $ 4 C \|u_{1}\|_{X_{T}} <1$.}
If {$u_{0} $ is small} in ${L^n(\Gamma(T M)} $, this is {true with $T= +\infty$}. 
  If {$u_{0}$ is not small}, we use as usual that
 {$ \lim_{T \rightarrow 0} \|u_{1}\|_{X_{T}} = 0$} to get the local (in time) well-posedness result in $X_T$.
By classical argument we finally get  $u \in \mathcal{C}([0, T], L^n).$

 In dimension $n=2$,  we can prove that the above solutions are global (in time) with any initial data  $u_0 \in L^2.$
 Though the energy estimate gives an unconditional control of the $L^2$ norm, this is not sufficient to obtain global existence since
 in the above fixed point argument the existence time $T$ does not depend only  on the $L^2$ norm of $u_0$. This is due to the fact that
  $L^2$ is the  critical space in dimension two. Nevertheless, we can overcome this problem with the following classical argument.
   We first note that at time $T$ the above solution is such that $u(T) \in H^1$ due to the smoothing effect.  By an easy fixed point Theorem, 
    we can then continue this solution in $H^1$  on $[T, T_1]$ with an existence time that only depends on the norm of the initial data in $H^1$.
     Consequently, we can obtain global existence if we derive an a priori bound on the $H^1$ norm of $u$.  Thanks to the boundedness
      of the Riesz transform that gives the estimate
      $$ \| \nabla u \|_{L^2} \lesssim \| du^\flat \|_{L^2} + \|u\|_{L^2}$$
      it actually suffices to get an estimate on the $L^2$ norm of $du^\flat$. To get this a priori estimate, we can observe that in terms
       of differential forms, the Navier-Stokes equation can be written as
       $$ \partial_t u^\flat + L_u u^\flat  + {1 \over 2}  d |v|^2 + dp = \delta_H u^\flat + 2 r u^\flat$$
       where $L_u$ is the Lie derivative. This yields for $\eta= d u^\flat$ the equation
       $$ \partial_t \eta + L_u \eta = \Delta_H \eta + 2 r \eta$$
        and hence by identifying $\eta$ with a scalar function $\omega$, we obtain
        $$ \partial_t \omega +  d \omega (v) = \Delta_g \omega + 2 r \omega.$$
         Since $v$ is divergence free, we deduce from this equation  that
         $$ \| \omega (t) \|_{L^2} \leq  \|\omega (s)\|_{L^2} ,\quad t  \geq s
$$
and the result follows. 
  
\end{proof}

\subsection{ Strong solutions for Navier-Stokes on  more general non-compact manifolds}\label{katofixed2}

 In this section, we shall study the well-posedness on suitable Banach spaces of the following non-linear Cauchy problem on more general non-compact Riemannian manifolds $M$ satisfying our assumptions {\bf(H1-4)} :
 \begin{equation}\label{NSprew1bis}
 \left\{ \begin{array}{ll}   \partial_t u  -  \overrightarrow{\Delta} u - r(u)  - B u=  - \mathbb{P}\left[ \nabla_u u \right] ,  \quad \mathbb{P}v=  v +  \grad (-\Delta_{g})^{-1} \dive v, \\
  \dive {u_0}=   0, \\   u_{|t=0}= u_{0}, \quad u_0 \in \Gamma(TM),
 \end{array} \right.
  \end{equation}
  where  $ Bu = - 2 \grad (-\Delta_g)^{-1}\dive (ru)$ and as remarked before  $B$ and $\mathbb{P}$ are bounded as  linear operators  $L^p \to L^p$ for every $p$, $ 1<p<+\infty$   (see again \cite{Lohoue}).
 We notice that the $(\overrightarrow{\Delta} +r -B) $ has not the property of commuting with the operator $\mathbb{P}$ on $M$, we thus have to modify the  functional space where we use the fixed point argument.
  
\begin{theorem} ({\bf{ Well-posedness on $M$}})
For every {$u_{0}\in L^n (\Gamma(T M)) \cap L^2 (\Gamma(T M)) $}, with $\dive u_{0}=0$, there exists $T>0$ and a {unique solution $u$}  of
 the incompressible Navier-Stokes equations  \eqref{NSprew1bis} such that {$u\in \mathcal{C}([0, T],  L^n (\Gamma(T M)) \cap L^2 (\Gamma(T M))\cap X_T$}.\\
Moreover, there exists $\delta>0$ such that if {$\|u_{0}\|_{L^n(\Gamma(T M))} + \|u_{0}\|_{L^2(\Gamma(T M))}   \leq \delta $} then the above solutions are {global in time}.\\
In { dimension $2$}, the solutions are {global for large data}.
\end{theorem}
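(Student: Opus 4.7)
The plan is to reformulate the problem as the Duhamel integral equation
$$u(t) = S(t) u_0 - \int_0^t S(t-\tau)\,\mathbb{P}[\nabla_u u](\tau)\, d\tau,$$
where $S(t) = e^{t(\overrightarrow{\Delta}+r-B)}$ is the Stokes semigroup whose mapping properties were established in Section~\ref{sectiondispersive}, and to solve it by Banach fixed point using Lemma~\ref{lemmafixed}. Since $\dive u_0 = 0$ and this is preserved by the evolution, one writes the nonlinearity as $\mathbb{P}[\dive(u\otimes u)]$. The new obstruction relative to Theorem~\ref{Theorconstant} is that $\mathbb{P}$ no longer commutes with $S(t)$, so the divergence-form smoothing estimate \eqref{smononl} cannot be used directly on the composition $S(t)\mathbb{P}\dive$.

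I would fix $q$ with $n < q < +\infty$ and work in the space
$$X_T := \Bigl\{ u \in L^\infty_{\mathrm{loc}}((0,T]; L^q) :\; \|u\|_{X_T} := \sup_{0<t<T} c_n(t)^{-(\frac{1}{n}-\frac{1}{q})} e^{\beta t}\|u(t)\|_{L^q} < +\infty\Bigr\},$$
supplemented with the requirement that $u \in L^\infty(0,T; L^n\cap L^2)$ (this $L^2$ ingredient is exactly what forces the hypothesis $u_0\in L^n\cap L^2$). That $S(\cdot)u_0$ belongs to $X_T$ follows by combining the Stokes dispersive estimate \eqref{BLpqestimates} with $p=n$ for the weighted $L^q$ part, the bound \eqref{BLpestimates} for the $L^n$ part, and the $L^2$ energy estimate \eqref{ENB}.

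The heart of the argument is the following analogue of \eqref{smononl} for the Stokes semigroup: for symmetric $2$-tensors $T$,
$$\|S(t)\mathbb{P}\dive T\|_{L^q} \leq C\, c_n(t)^{\frac{1}{n}+\frac{1}{q}}\, e^{-\beta t}\bigl(\|T\|_{L^{q/2}} + \|T\|_{L^2}\bigr).$$
I would derive this by duality: for $\varphi \in L^{q'}$, using $(\dive)^* = -\nabla$ and the $L^2$-self-adjointness of $\mathbb{P}$,
$$|\langle S(t)\mathbb{P}\dive T,\varphi\rangle| \leq \|T\|_{L^{q/2}}\,\|\nabla\mathbb{P}\, S(t)^*\varphi\|_{L^{(q/2)'}},$$
and then invoke the $L^p$-boundedness of the Riesz transform from \cite{Lohoue} to replace $\nabla\mathbb{P}$ by $\nabla$, followed by the dual of the smoothing bounds \eqref{BLSpestimates}--\eqref{BLSpqestimates2} applied to $\nabla S(t)^*$; the $L^2$ term mirrors the one already present in \eqref{BLSpestimates}. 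Inserting this estimate into the Duhamel formula with $T = u\otimes u$, using $\|u\otimes u\|_{L^{q/2}} \leq \|u\|_{L^q}^2$, and exploiting the $L^2$ control coming from the energy inequality, one checks exactly as in the Einstein case that
$$\int_0^t c_n(t-\tau)^{\frac{1}{n}+\frac{1}{q}} e^{-\beta(t-\tau)} \bigl[c_n(\tau)^{\frac{1}{n}-\frac{1}{q}} e^{-\beta\tau}\bigr]^2 d\tau \leq C\, c_n(t)^{\frac{1}{n}-\frac{1}{q}} e^{-\beta t},$$
so that the bilinear operator maps $X_T \times X_T$ into $X_T$ continuously.

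With this bilinear estimate in hand, Lemma~\ref{lemmafixed} yields a unique fixed point in $X_T$ whenever $\|S(\cdot)u_0\|_{X_T}$ is small. For $\|u_0\|_{L^n \cap L^2}$ small this holds at $T = +\infty$, giving global well-posedness in the small-data regime; for arbitrary data one verifies that $\|S(\cdot)u_0\|_{X_T} \to 0$ as $T \to 0^+$ by density of $L^n\cap L^2\cap L^q$ in $L^n\cap L^2$, producing a local solution which by standard arguments lies in $\mathcal{C}([0,T]; L^n\cap L^2)\cap X_T$. In dimension two, the large-data global existence follows as in Theorem~\ref{Theorconstant}: passing to the Hodge form of the equation, the scalar vorticity satisfies a transport-diffusion equation with non-increasing $L^2$ norm, which combined with the energy inequality provides the a priori $H^1$ bound needed to continue the solution globally. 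The main difficulty is the bilinear smoothing estimate for $S(t)\mathbb{P}\dive$: the failure of $\mathbb{P}$ to commute with $S(t)$ forces the duality detour above, and it is precisely this detour, together with the $L^2$ input in \eqref{BLSpestimates}, that obliges us to enlarge the solution space so as to carry $L^2$ information in addition to the critical $L^n$ information.
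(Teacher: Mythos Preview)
Your route is quite different from the paper's and, as written, has real gaps in the duality step that you single out as ``the heart of the argument''.

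The paper does \emph{not} attempt to produce a divergence--form smoothing estimate for $S(t)\mathbb{P}\dive$. Instead it abandons the space $X_T$ of Theorem~\ref{Theorconstant} and enlarges it so as to carry, besides the weighted $L^q$ norm of $u$, two weighted gradient norms $\|\nabla u(t)\|_{L^{\tilde q}}$ and $\|\nabla u(t)\|_{L^{s}}$, with the H\"older relations $1/2 = 1/q + 1/\tilde q$ and $1/r = 1/q + 1/s$. The nonlinearity is kept as $\mathbb{P}[\nabla_u u]$ (not put in divergence form), estimated in $L^r\cap L^2$ via $\|\nabla_u u\|_{L^r}\le\|u\|_{L^q}\|\nabla u\|_{L^s}$ and $\|\nabla_u u\|_{L^2}\le\|u\|_{L^q}\|\nabla u\|_{L^{\tilde q}}$, and then one applies the \emph{forward} dispersive and smoothing bounds \eqref{BLpqestimates}, \eqref{BLSpestimates} directly to $S(t-\tau)$ acting on this $L^r\cap L^2$ function. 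The three pieces of the $X_T$ norm are closed simultaneously; no duality and no adjoint semigroup appear.

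Your duality argument needs $\|\nabla S(t)^*\varphi\|_{L^{(q/2)'}}\lesssim c_n(t)^{1/n+1/q}e^{-\beta t}\|\varphi\|_{L^{q'}}$ for arbitrary $\varphi\in L^{q'}$. There are two obstructions. First, the large--time exponential decay for $S(t)$ in Theorem~\ref{theolinB1} and \eqref{BLSpestimates}--\eqref{BLSpqestimates2} comes from the energy identity \eqref{ENB}, which uses that $\langle Bu,u\rangle=0$ on divergence--free fields and that $S(t)$ preserves $\dive u=0$; for the adjoint generator $\overrightarrow{\Delta}+r-B^*$ with $B^*=-2r\grad(-\Delta_g)^{-1}\dive$, divergence--freeness is not preserved and no analogous large--time decay is available on general test data. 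Second, even if one grants the analogue of \eqref{BLSpestimates}--\eqref{BLSpqestimates2} for $S(t)^*$, those estimates only output gradients in Lebesgue exponents $\ge 2$; your duality forces the output exponent $(q/2)'\ge 2$, i.e.\ $q\le 4$, which is incompatible with $q>n$ once $n\ge 4$. Finally, the extra term $\|T\|_{L^2}=\|u\otimes u\|_{L^2}=\|u\|_{L^4}^2$ is not controlled by the energy inequality (which bounds $\|u\|_{L^2}$, not $\|u\|_{L^4}$), and does not arise from your duality in the way you describe: the $L^2$ contribution in \eqref{BLSpestimates} sits on the \emph{input} of the smoothing map, hence after dualizing it would land on $\varphi$, not on $T$. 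The paper's choice of an $X_T$ containing gradient norms is precisely what sidesteps all three issues.
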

\begin{proof}
We have to solve the  fixed point problem
$$ u(t) =  e^{t  (\overrightarrow{\Delta} +r - B)} u_{0} - \int_{0}^t   e^{(t- \tau)  (\overrightarrow{\Delta}+r - B)  }   \mathbb{P}( \nabla_u u   )(\tau)\, d\tau
 = u_{1} + B(u,u)(t).
$$ 
 We use again the previous Lemma \ref{lemmafixed} with the following functional space $X_T$:
\begin{align*}
&X_{T}=  \left\{ u \in L^\infty_{loc}([0, T], L^q(\Gamma(T M)), \,
 \nabla u \in L^\infty_{loc}([0, T], L^{\tilde{q}}(\Gamma(T M)) \cap L^\infty_{loc}([0, T], L^s(\Gamma(T M))  | \right.\\
   &\left. e^{\beta t}  c_n(t)^{- ({1 \over n} - {1 \over q } ) }  \|u(t)\|_{L^q} +   e^{\beta t} c_n(t)^{- ({2 \over n} - {1 \over {\tilde{q}}} ) }  \| \nabla u(t)\|_{L^{\tilde{q}}} +   e^{\beta t}  c_n(t)^{- ({2 \over n} - {1 \over s })}  \| \nabla u(t)\|_{L^s}   \in L^\infty(0,T) \right\}
\end{align*}
for some suitable $ n<q,  \widetilde{q}, s<+\infty $ and $\beta$ adapted to the large time decay rate of our dispersive estimates.
We recall that $ c_n(t)=  C_{n} \max{\left({1 \over   t^{n\over 2}}, 1 \right)}$. 
  Thanks to the $L^p$ boundedness of the Riesz transform (see \cite{Lohoue})
and our dispersive estimates \eqref{BLpqestimates}, we have
\begin{align}
\label{B1} \|B(u,u)(t)\|_{L^q}  & \leq C \int_{0}^t { c_n(t- \tau)^{ {1 \over r} - {1 \over q}  } }e^{- \beta (t- \tau)} \left( \| \nabla_u u (\tau)\|_{L^{r }} +  \| \nabla_u u (\tau)\|_{L^{2 }} \right) \, d\tau 
\end{align}
for $q\geq r \geq 2$ and by using the H\"older inequality and the definition of our $X_T$ norm, we obtain
\begin{align*}
 & \leq C \int_{0}^t { c_n(t-\tau)^{ {1 \over r} - {1 \over q}  } }e^{- \beta (t-\tau)} \left( \| u (\tau)\|_{L^{q}} \| \nabla u (\tau)\|_{L^{s }} + \|  u (\tau)\|_{L^{q }}  \|  \nabla u (\tau)\|_{L^{\widetilde{q} }} \right) \, d\tau  \\ 
  & \leq C \int_{0}^t  c_n(t-\tau)^{ {1 \over r} - {1 \over q}  }  e^{- \beta (t-\tau)}  e^{- 2 \beta \tau} 
 c_n(\tau)^{ {1 \over n} - {1 \over q}  } \left( c_n(\tau)^{ {2 \over n} - {1 \over s}  } + c_n(\tau)^{ {2 \over n} - {1 \over \widetilde{q}}  } \right)   \, d \tau   \|  u (\tau)\|_{X_T}^2
\end{align*}
with 
\begin{equation}\label{CHolder}
\frac{1}{r}= \frac{1}{q}+ \frac{1}{s}\,, \quad \frac{1}{2}= \frac{1}{q}+ \frac{1}{\widetilde{q}} \,.
\end{equation}
Consequently, this yields
$$ \sup_{t>0} \left[ c_n(t)^{-(\frac{1}{n} - \frac{1}{q})} e^{ \beta t}  \|B(u,u)(t)\|_{L^q} \right]  \leq C  \| u \|_{X_{T}}^2, $$
since 
$$ \sup_{t>0} \, \left[c_n(t)^{-(\frac{1}{n} - \frac{1}{q})} \int_{0}^t  c_n(t-\tau)^{ {1 \over r} - {1 \over q}  }   e^{- \beta \tau} 
 c_n(\tau)^{ {1 \over n} - {1 \over q}  } \left( c_n(\tau)^{ {2 \over n} - {1 \over s}  } + c_n(\tau)^{ {2 \over n} - {1 \over \widetilde{q}}  } \right)   \, d \tau\right] < + \infty$$
 if 
 \begin{equation}\label{Co2}
 q \geq n, \quad q\geq r,\quad 2\leq r < n, \quad s > \frac{n}{2},\quad \frac{1}{r} \leq \frac{1}{s} + \frac{1}{n}.
 \end{equation}
In similar way by using  the smoothing estimates \eqref{BLSpestimates}, we get
\begin{align}
\label{nablaB1}
 \|\nabla B(u,u)(t)\|_{L^s}  & \leq C \int_{0}^t { c_n(t- \tau)^{ {1 \over r} - {1 \over s} + {1 \over n}  } }e^{- \beta (t- \tau)} \left( \| \nabla_u u (\tau)\|_{L^{r }} +  \| \nabla_u u (\tau)\|_{L^{2 }} \right) \, d\tau 
\end{align}
for $s\geq r\geq 2$ and again by using the H\"older inequality and the definition of our $X_T$ norm, we obtain
\begin{align*}
 & \leq C \int_{0}^t { c_n(t-\tau)^{ {1 \over r} - {1 \over s} + {1 \over n}   } }e^{- \beta (t-\tau)} \left( \| u (\tau)\|_{L^{q}} \| \nabla u (\tau)\|_{L^{s }} + \|  u (\tau)\|_{L^{q }}  \|  \nabla u (\tau)\|_{L^{\widetilde{q} }} \right) \, d\tau  \\ 
  & \leq C \int_{0}^t  c_n(t-\tau)^{ {1 \over r} - {1 \over s} + {1 \over n}   }  e^{- \beta (t-\tau)}  e^{- 2 \beta \tau} 
 c_n(\tau)^{ {1 \over n} - {1 \over q}  } \left( c_n(\tau)^{ {2 \over n} - {1 \over s}  } + c_n(\tau)^{ {2 \over n} - {1 \over \widetilde{q}}  } \right)   \, d \tau   \|  u (\tau)\|_{X_T}^2.
\end{align*}
Then 
$$ \sup_{t>0} \left[ c_n(t)^{-(\frac{2}{n} - \frac{1}{s})} e^{ \beta t}  \| \nabla B(u,u)(t)\|_{L^s} \right]  \leq C  \| u \|_{X_{T}}^2, $$
since 
$$ \sup_{t>0} \, \left[c_n(t)^{-(\frac{2}{n} - \frac{1}{s})} \int_{0}^t  c_n(t-\tau)^{ {1 \over r} - {1 \over s} + {1 \over n}  }   e^{- \beta \tau} 
 c_n(\tau)^{ {1 \over n} - {1 \over q}  } \left( c_n(\tau)^{ {2 \over n} - {1 \over s}  } + c_n(\tau)^{ {2 \over n} - {1 \over \widetilde{q}}  } \right)   \, d \tau\right] < + \infty$$
 if 
  \begin{equation}\label{Co3}
 s \geq n, \quad s\geq r,\quad 2\leq r < n, \quad \frac{1}{r} \leq \frac{1}{s} + \frac{1}{n}.
  \end{equation}
Finally, in the same way by using again  the smoothing estimates \eqref{BLSpestimates}, we have
\begin{align*}
 \|\nabla B(u,u)(t)\|_{L^{\widetilde{q}}}  & \leq C \int_{0}^t { c_n(t- \tau)^{ {1 \over r} - \frac{1}{\widetilde{q}}  + {1 \over n}  } }e^{- \beta (t- \tau)} \left( \| \nabla_u u (\tau)\|_{L^{r }} +  \| \nabla_u u (\tau)\|_{L^{2 }} \right) \, d\tau 
\end{align*}
for $ \widetilde{q}\geq r\geq 2$ and again by using the H\"older inequality and the definition of our $X_T$ norm, we obtain
\begin{align*}
 & \leq C \int_{0}^t { c_n(t-\tau)^{ {1 \over r} - \frac{1}{\widetilde{q}} + {1 \over n}   } }e^{- \beta (t-\tau)} \left( \| u (\tau)\|_{L^{q}} \| \nabla u (\tau)\|_{L^{s }} + \|  u (\tau)\|_{L^{q }}  \|  \nabla u (\tau)\|_{L^{\widetilde{q} }} \right) \, d\tau  \\ 
  & \leq C \int_{0}^t  c_n(t-\tau)^{ {1 \over r} - \frac{1}{\widetilde{q}} + {1 \over n}   }  e^{- \beta (t-\tau)}  e^{- 2 \beta \tau} 
 c_n(\tau)^{ {1 \over n} - {1 \over q}  } \left( c_n(\tau)^{ {2 \over n} - {1 \over s}  } + c_n(\tau)^{ {2 \over n} - {1 \over \widetilde{q}}  } \right)   \, d \tau   \|  u (\tau)\|_{X_T}^2.
\end{align*}
Thus
$$ \sup_{t>0} \left[ c_n(t)^{-(\frac{2}{n} - \frac{1}{\widetilde{q}})} e^{ \beta t}  \| \nabla B(u,u)(t)\|_{L^{\widetilde{q}}} \right]  \leq C  \| u \|_{X_{T}}^2, $$
since 
$$ \sup_{t>0} \, \left[c_n(t)^{-(\frac{2}{n} - \frac{1}{\widetilde{q}})} \int_{0}^t  c_n(t-\tau)^{ {1 \over r} - \frac{1}{\widetilde{q}} + {1 \over n}  }   e^{- \beta \tau} 
 c_n(\tau)^{ {1 \over n} - {1 \over q}  } \left( c_n(\tau)^{ {2 \over n} - \frac{1}{s} } + c_n(\tau)^{ {2 \over n} - {1 \over \widetilde{q}}  } \right)   \, d \tau\right] < + \infty$$
 if 
  \begin{equation}\label{Co4}
\widetilde{q} \geq n, \quad \widetilde{q} \geq r,\quad 2\leq r < n, \quad \frac{1}{r} \leq \frac{1}{\widetilde{q}} + \frac{1}{n}.
 \end{equation}
Consequently, by choosing $q,\widetilde{q}, s$ such that the conditions \eqref{CHolder}, \eqref{Co2}, \eqref{Co3}, \eqref{Co4} are verified, we obtain 
$$  \|B(u,u)\|_{X_{T}} \leq C  \| u \|_{X_{T}}^2. $$
By using the Lemma \ref{lemmafixed}, we get  a { solution in $X_{T}$ \, if  \, $ 4 C \|u_{1}\|_{X_{T}} <1$.}
If {$u_{0} $ is small} in $ L^n(\Gamma(T M)) \cap  L^2(\Gamma(T M)) $, this is {true with $T= +\infty$}. 
  If {$u_{0}$ is not small}, we use as usual that
 {$ \lim_{T \rightarrow 0} \|u_{1}\|_{X_{T}} = 0$} to get the local (in time) well-posedness result in $X_T$.
By classical arguments,  we finally get  $u \in \mathcal{C}([0, T], L^n\cap  L^2).$

To handle the two-dimensional case, we can proceed in the same way with 
$X_T$ defined as
\begin{align*}
&X_{T}=  \left\{ u \in L^\infty_{loc}([0, T], L^q(\Gamma(T M)), \,
 \nabla u \in L^\infty_{loc}([0, T], L^{\tilde{q}}(\Gamma(T M)) \cap L^\infty_{loc}([0, T], L^s(\Gamma(T M))  | \right.\\
   &\left. e^{\beta t}  c_n(t)^{- ({1 \over n} - {1 \over q } ) }  \|u(t)\|_{L^q} +      e^{\beta t}  c_n(t)^{- ({2 \over n} - {1 \over s })}  \| \nabla u(t)\|_{L^s}   \in L^\infty(0,T) \right\}
\end{align*}
with $q>2$ and $s>2$. To estimate $B(u,u)$ in $X_T$ the only differences with the previous computations is 
 that when
using the dispersive and smoothing estimates to get \eqref{B1} and  \eqref{nablaB1}, we take $r<2$ and thus we apply 
\eqref{BLpqestimates2}, \eqref{BLSpqestimates2} in place of \eqref{BLpqestimates}, \eqref{BLSpestimates}.

To get the global well-posedness, we use the same argument as in the end of the proof of Theorem \ref{Theorconstant}.

\end{proof}

\section{Remarks on the uniqueness of weak solutions}\label{sectionLeray}

 As shown in \cite{Czubak}, \cite{Khesin},  for  two-dimensional manifolds, one has to be careful with the definition of  Leray type  solutions in order to get uniqueness.
 Indeed, it was proven that for the hyperbolic space  $\mathbb{H}^2$, due to the presence of non-trivial bounded harmonic forms there exists infinitely many
 weak solutions $u \in L^\infty_{T} L^2 \cap  L^2_{T} H^1$ that satisfy the energy inequality:
 \begin{equation}
 \label{energie}
  \|u(t)\|_{L^2}^2 + \int_{0}^t  \| \nabla u(s) \|_{L^2}^2 \, ds \leq \|u(0)\|_{L^2}^2
 \end{equation}
for almost every $t \geq 0$.
A way to recover the uniqueness, by carefully selecting the weak solution was recently proposed in \cite{CzubakChan}. We shall propose another way to recover the uniqueness
 in terms of the regularity of the pressure for two-dimensional manifolds that satisfy {\bf(H1-4)}.
 Let us recall that the pressure is  the solution of the elliptic equation
 \begin{equation}
 \label{eqpression} \Delta_{g} p= -\dive ( \dive (u\otimes u)  - 2 ru).\end{equation}
  If $u$ has the regularity of a Leray solution, $u \in L^\infty_{T} L^2 \cap  L^2_{T} H^1$, then
  $\dive (ru) \in L^2_{T} L^2$ and $u \otimes u  \in L^2_{T} L^2$. Indeed, we have that
  $$ \| u \otimes u  \|_{L^2} \leq \| | u |  \|_{L^4}^2 \leq  C\big( \| \nabla u \|_{L^2} \|u\|_{L^2} + \| u \|_{L^2}^2\big)$$
  thanks to the Gagliardo-Nirenberg inequality in Remark \ref{REMP} (3). This yields
  $$  \| u \otimes u  \|_{L^2_{T} L^2} \leq  C \big( \|u\|_{L^\infty_{T} L^2} \| \nabla u \|_{L^2_{T} L^2} + T^2 \|u \|_{L^\infty_{T} L^2}^2 \big).$$
   Consequently  $ \dive (\dive (u \otimes u)) \in L^2_{T}H^{-2}.$
  Since $\Delta_{g}:\, L^2 \rightarrow H^{-2}$ is an isomorphism by using \cite{Lohoue}, there exists a unique solution $p$ of 
  \eqref{eqpression} such that $ p \in L^{2}_T L^2$.
  This motivates the following definition of Leray weak solutions:
  \begin{definition}
   For every divergence free  $u_{0}\in L^2 \Gamma(TM)$, we shall say that $u \in L^\infty_{T}L^2 \cap L^2_{T}H^1$ is a Leray weak solution of the Navier-Stokes equation
    with initial data $u_{0}$
    if for every $\phi \in \mathcal{C}^1_{c}(\overline{\mathbb{R}}_{+} \times M, TM),$ we have
   \begin{multline}
   \label{defweak}\int_{\mathbb{R_{+}} \times M} \Big(  g( u, \partial_{t} \phi) + g( u\otimes u, \nabla \phi) + p \dive \phi - g( \nabla u , \nabla \phi) +  g(ru, u) \Big) dV_{g} dt
    \\+ \int_{M} g(u_{0}, \phi(0, \cdot) ) dV_{g} = 0
    \end{multline}
    with $p \in L^2_{T}L^2$ the unique solution of the elliptic equation \eqref{eqpression}.
     \end{definition}
    We claim that
  \begin{theorem}
  Assume that $M$ is a two-dimensional complete  simply connected non-compact manifold that satisfy {\bf (H1-4)}. Then, 
    for every divergence free  $u_{0}\in L^2 \Gamma(TM)$, there  exists a unique weak Leray solution.
  \end{theorem}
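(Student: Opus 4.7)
The plan is to establish existence and uniqueness separately, exploiting the pressure selection already discussed in the paper and the two-dimensional Gagliardo-Nirenberg inequality of Remark \ref{REMP}(3).

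For existence, I would construct a Leray solution by approximation. Since $n=2$ we have $L^n = L^2$, so I can regularize $u_0$ by a sequence $u_0^\epsilon$ of divergence-free smooth vector fields with compact support, converging to $u_0$ in $L^2$. By Theorem \ref{Theorconstant} (in the Einstein case) or its non-Einstein counterpart, each $u_0^\epsilon$ produces a global solution $u^\epsilon$ with pressure $p^\epsilon$ defined as the $L^2$ solution of the elliptic equation \eqref{eqpression}. The energy estimate \eqref{EN} gives uniform bounds $u^\epsilon$ in $L^\infty_T L^2 \cap L^2_T H^1$. Using the equation itself one bounds $\partial_t u^\epsilon$ in $L^2_T H^{-N}$ for some $N$, and an Aubin-Lions type argument combined with the Poincaré inequality yields a subsequence converging weakly in $L^2_T H^1$, weak-$\ast$ in $L^\infty_T L^2$, and strongly in $L^2_{T,\mathrm{loc}} L^2$. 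The strong convergence lets me pass to the limit in $u^\epsilon \otimes u^\epsilon$; the pressures $p^\epsilon$ then converge in $L^2_T L^2$ to the unique $L^2$ solution $p$ associated with the limit $u$ (this is where the isomorphism $\Delta_g : L^2 \to H^{-2}$ of Lohoué is used once more). Lower semicontinuity of the norms gives \eqref{energie}, so the limit is a Leray weak solution in our sense.

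For uniqueness, let $u_1,u_2$ be two Leray solutions with the same initial data, and let $p_1,p_2$ be the associated $L^2$ pressures. Set $w = u_1 - u_2$ and $q = p_1 - p_2$. The difference satisfies, in the distributional sense,
\begin{equation*}
\partial_t w - \overrightarrow{\Delta} w - r(w) + \nabla_{u_1} w + \nabla_w u_2 + \grad q = 0, \quad \dive w = 0, \quad w|_{t=0} = 0,
\end{equation*}
with $q$ the unique $L^2$ solution of $\Delta_g q = -\dive(\dive(u_1 \otimes w + w \otimes u_2) - 2r w)$. The crucial point is that, because the pressure is pinned down as the $L^2$ solution rather than an arbitrary representative modulo harmonic forms, testing against $w$ makes the pressure term disappear: an approximation by $C^\infty_c$ vector fields (dense in $H^1$ by Remark \ref{REMP}(1)) and integration by parts give $\int_M g(\grad q, w)\,dV_g = -\int_M q \dive w\,dV_g = 0$. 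This is precisely the mechanism that defeats the Czubak–Khesin obstruction. Testing against $w$, applying the Bochner identity \eqref{Bochner} and hypothesis (H2), I obtain
\begin{equation*}
\tfrac{1}{2}\tfrac{d}{dt}\|w\|_{L^2}^2 + \|\nabla w\|_{L^2}^2 + c_0\|w\|_{L^2}^2 \leq \Big|\int_M g(\nabla_w u_2, w)\,dV_g\Big|,
\end{equation*}
the term $\int g(\nabla_{u_1} w, w)$ vanishing thanks to $\dive u_1 = 0$.

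I bound the right-hand side by $\|\nabla u_2\|_{L^2}\|w\|_{L^4}^2$ and invoke the two-dimensional Gagliardo-Nirenberg inequality of Remark \ref{REMP}(3), $\|w\|_{L^4}^4 \leq C(\|\nabla w\|_{L^2}^2\|w\|_{L^2}^2 + \|w\|_{L^2}^4)$. After Young's inequality this yields
\begin{equation*}
\tfrac{d}{dt}\|w\|_{L^2}^2 + \|\nabla w\|_{L^2}^2 \leq C\bigl(1 + \|\nabla u_2\|_{L^2}^2\bigr)\|w\|_{L^2}^2.
\end{equation*}
Since $u_2 \in L^2_T H^1$, the factor $1+\|\nabla u_2(t)\|_{L^2}^2$ lies in $L^1_T$ and Gronwall's lemma together with $w(0)=0$ forces $w \equiv 0$. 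The principal obstacle is the justification that $w$ itself is an admissible test function and that the pressure pairing vanishes exactly; this rests on the density statement in Remark \ref{REMP}(1) combined with the specific $L^2$ selection of the pressure, which is what guarantees that the harmonic-1-form ambiguity visible in \cite{Czubak}, \cite{Khesin} does not reappear in our class.
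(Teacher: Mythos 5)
Your uniqueness argument is, in substance, the paper's: an energy estimate for the difference $w=u_1-u_2$, cancellation of the pressure term because $q\in L^2$ is the selected solution of the elliptic equation and $\dive w=0$, the two-dimensional Gagliardo--Nirenberg inequality of Remark \ref{REMP}(3), then Young and Gronwall. Two points that you flag as ``the principal obstacle'' but do not carry out are exactly where the paper spends its effort: (i) the admissibility of $w$ as a test function is obtained there by first showing $\partial_t u\in L^2_TH^{-1}$, then proving the density of $\mathcal{C}^1_c$ in $X=\mathcal{C}_{loc}L^2\cap L^2_{loc}H^1\cap H^1_{loc}L^2$ together with the continuity of all the bilinear and trilinear forms of \eqref{defweak} on $X$, and finally testing with $\phi=(\rho_{\varepsilon}\ast 1_{[0,T]})(u-v)$ and letting $\varepsilon\to 0$; (ii) the definition of a weak Leray solution only imposes $\dive u_0=0$, so before writing $\int_M g(\grad q,w)\,dV_g=-\int_M q\,\dive w\,dV_g=0$ you must show that $\dive u(t)=0$ for $t>0$; the paper does this by taking the divergence of \eqref{udistrib} and noting that $\dive u$ solves the heat equation with zero initial data. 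Both gaps are fillable and your outline is compatible with the way the paper fills them.

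The genuine gap is in your existence step. The claim that every divergence-free $u_0\in L^2$ can be approximated in $L^2$ by \emph{compactly supported} smooth divergence-free fields fails on exactly the manifolds at stake. On a simply connected surface a compactly supported divergence-free field is of the form $(\ast d\psi)^{\sharp}$ with $\psi\in\mathcal{C}^\infty_c$, and the $L^2$-orthogonal complement of this set inside the divergence-free fields is precisely the space $\mathcal{H}^1(M)$ of $L^2$ harmonic $1$-forms appearing in the Kodaira--Hodge decomposition recalled in Section \ref{sectionnavierstokes}. On $\mathbb{H}^2$ this space is infinite-dimensional and contains the fields $(d\Phi)^{\sharp}$ that generate the Czubak--Khesin non-uniqueness examples, so your approximating sequence cannot reach such data: the existence scheme breaks exactly on the initial data the theorem is designed to handle. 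The repair is immediate and is what the paper does in one line: in dimension $2$ one has $L^n=L^2$, so Theorem \ref{Theorconstant} applies directly to $u_0$ itself and produces a global strong solution which is already a weak Leray solution; no regularization of the data is needed.
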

    \begin{proof}
   There are many classical ways to prove the existence. Note that the strong solutions that we have constructed in section \ref{katofixed} are actually weak solutions, 
   therefore, we shall focus on the uniqueness.
   
   To prove the uniqueness, we shall first prove that our definition of weak solution contains that they satisfy the energy inequality (and even the energy equality).
    We first notice that if $u$ is a weak Leray solution, then $u$ is a solution of
     \begin{equation}
     \label{udistrib}\partial_{t} u = \Delta u + r u - \dive u\otimes u  - \nabla p
     \end{equation}
      in the distribution sense and that $u_{|t=0}= u_{0}$ in the weak sense. Note that the right hand side belongs to $L^2_{T} H^{-1}$ and therefore $\partial_{t}u \in L^2_{T}H^{-1}.$
       We thus obtain that
       \begin{equation}
        \label{ipp}
        \|u(t) \|_{L^2}^2 - \|u(s)\|_{L^2}^2 =  2\int_{s}^t \langle \partial_{t} u , u \rangle \, d\tau
        \end{equation}
        where $\langle \cdot, \cdot \rangle$ is the duality bracket $H^1-H^{-1}$.
         In particular, we obtain that $u \in \mathcal{C}L^2$.
         
    Next, thanks to our assumptions {\bf(H1-4)}, we have that $\mathcal{C}^1_{c}(\overline{\mathbb{R}}_{+} \times M)$ is dense in $ X= \mathcal{C}_{loc}([0, +\infty[, L^2) \cap L^2_{loc}(\mathbb{R}_{+},  H^1) \cap H^1_{loc}(\mathbb{R}_{+},  L^2)$.
    Moreover, all the bilinear terms that appear in the definition \eqref{defweak} are continuous on $X \times X$,  and the trilinear term
     $$ B(u,v,\phi)= \int_{\mathbb{R} \times M}  \Big( g( u\otimes v, \nabla \phi)  - \Delta_{g}^{-1}\big( \dive \dive (u \otimes v) \big) \dive \phi \Big)$$
      is continuous on $X \times X \times X$ as a consequence of the Gagliardo-Nirenberg inequality since
      $$ \|u \otimes v \|_{L^2_{T}L^2} \leq C(T) \big(  \|u \|_{L^\infty_{T} L^2}  \|u\|_{L^2_{T} H^1}\big)^{1 \over 2} \big(\|v \|_{L^\infty_{T} L^2} \|v \|_{L^2_{T} H^1}\big)^{1 \over 2}.$$
       This yields that in our definition of weak solution we can take $\phi \in  X$.
       In addition, since $u \in X$, we can  use the definition for  $\phi=  (\rho_{\varepsilon} *1_{[0, T]}) u$, for every $T >0$ fixed. By using \eqref{ipp}, we obtain by taking $\varepsilon$ to zero that 
       $${ 1 \over 2} \|u(T) \|_{L^2}^2  + \int_{0}^T \int_{M} \big(| \nabla u|^2 - g(ru,u) - g( u\otimes u, \nabla u) - p \, \dive u \big) \, dV_{g}\, dt  ={ 1 \over 2} \|u(0)\|_{L^2}^2.$$
         Since $u$ solves \eqref{udistrib} in the distribution sense, we obtain by taking the divergence that 
         $$ \partial_{t} \dive u - \Delta \dive u = 0.$$
          This proves that $\dive u \in L^2_{T}L^2$ is a solution of the heat equation with  zero initial data. Consequently, $u$ stays divergence free for all times.
           Thus, we obtain 
           $$ \int_{M} g( u \otimes u, \nabla u) = -  \int_{M} g(\nabla_{u} u, u) =  {1 \over 2}\int_{M} |u|^2 \dive u\, dV_{g} = 0.$$
            Consequently, we have proven that 
           $$ { 1 \over 2}  \|u(T) \|_{L^2}^2  + \int_{0}^T \int_{M} \big(| \nabla u|^2 - g(ru,u) \big) \, dV_{g}\, dt  = { 1 \over 2 }\|u(0)\|_{L^2}^2$$
           which is the energy equality.   
           
         Now, consider $u$, $v$ two Leray weak solutions. We can take $\phi=   (\rho_{\varepsilon} *1_{[0, T]}) (u-v)$ and let $\varepsilon$ to zero  to  get that
         \begin{multline*}  (u(T), u(T)- v(T))_{L^2} -  \int_{0}^T\langle u,  \partial_{t}(u-v)\rangle dt  \\+ \int_{0}^T \int_{M}  \Big(g( \nabla u, \nabla (u-v)) - g(ru, u-v))- g(u\otimes u, \nabla (u-v) )\Big) dV_{g} dt = 0
             \end{multline*}
           and that
            \begin{multline*} 
           (v(T), u(T)- v(T))_{L^2} -   \int_{0}^T\langle v,  \partial_{t}(u-v\rangle) dt  \\+ \int_{0}^T \int_{M}  \Big(g( \nabla v, \nabla (u-v)) - g(rv, u-v))- g(v\otimes v, \nabla (u-v) )\Big) dV_{g} dt = 0
          \end{multline*}
        since as already observed $u$ and $v$ are divergence free.
         Next, we can subtract the two identities to obtain
        \begin{multline}
        \label{unique1}  { 1 \over 2} \|u(T) - v(T) \|_{L^2}^2 + \int_{0}^T \big(\| \nabla (u-v) \|_{L^2}^2   + c_{0} \| u-v \|_{L^2}^2\big)\, dt
          \leq  { 1 \over 2} \|u_{0} - v_{0} \|_{L^2}^2  \\+ \int_{0}^T \int_{M} |  g(\nabla_{u}u, u-v) - g(\nabla_{v}v, u-v) | \, dV_{g} dt.
          \end{multline}
           By using that
           $$  g(\nabla_{u}u, (u-v) - g(\nabla_{v}v, u-v) = g( \nabla_{u}(u-v), (u-v) ) + g(\nabla_{u} v - \nabla_{v} v, u-v),$$ we
           obtain 
          \begin{multline}  { 1 \over 2} \|u(T) - v(T) \|_{L^2}^2 + \int_{0}^T \big( \| \nabla (u-v) \|_{L^2}^2  + c_{0} \| u-v \|_{L^2}^2\big) \, dt
          \leq  { 1 \over 2} \|u_{0} - v_{0} \|_{L^2}^2  \\+ \int_{0}^T \int_{M} |  g(\nabla_{u}v - \nabla_{v} v, (u-v) ) | \, dV_{g} dt.
          \end{multline}
           To conclude, we can use again the Gagliardo-Nirenberg inequality  which yields
          \begin{align*}  \int_{0}^T \int_{M} |  g(\nabla_{u}v - \nabla_{v} v, (u-v) ) | \, dV_{g} dt
            &  \leq   \int_{0}^T \int_{M}  | \nabla v| |u-v|^2 \, dV_{g} dt  \\
             & \leq \int_{0}^T  \| \nabla v\|_{L^2} \big( \| \nabla (u-v) \|_{L^2} + \| u-v\|_{L^2} \big) \|u-v\|_{L^2} dt.
             \end{align*}
             By using the Young inequality,  we obtain  from \eqref{unique1} 
            $$  { 1 \over 2} \|u(T) - v(T) \|_{L^2}^2       \leq  { 1 \over 2} \|u_{0} - v_{0} \|_{L^2}^2 +  C \int_{0}^T \| \nabla v \|_{L^2}^2  \|u-v\|_{L^2}^2 dt, \quad \forall T \geq 0$$
             and hence from the Gronwall inequality, we have
             $$ \|u(T)- v(T)\|_{L^2}^2 \leq \|u_{0} - v_{0} \|_{L^2}^2 e^{\int_{0}^T C  \| \nabla v \|_{L^2}^2 dt}, \quad \forall T \geq 0.$$
  In particular, if $u_{0}= v_{0}$, we obtain that $u(T)= v(T)$  for all positive times.

  \end{proof}
  
    As a final remark, we can analyze how  the counterexample given in \cite{Khesin} in the case of $\mathbb{H}^2$  is excluded by our definition of Leray weak solution.
 The velocity field $u$ was chosen under the form
 $$ u= f(t)  (d\Phi)^\sharp$$
  where $\Phi$ is an harmonic function such that $d\Phi \in L^2$ and $f$ is an arbitrary  function of time. In order, to ensure that $v$ is a solution,  the pressure $p$ is chosen as
  $$ p = (2 f(t) - f'(t))\Phi - {1 \over 2} f(t)^2 | d\Phi|^2.$$
  The restriction given by the energy inequality is not sufficient to ensure that the time profile $f$ is completely determined.
   In this construction $d\Phi \in L^2$ but $\Phi$ itself does not belong to $L^2$, consequently, if we require that $p \in L^2_{T}L^2$, then we necessarily have
   $ f' = 2f$
   and thus $f(t) = f(0) e^{2 t}$. This determines completely $u$ from its initial value.
   
   Let us finally note that the definition of weak solutions in \cite{CzubakChan} leads to the same selection of the velocity in the analysis of this counterexample.

\vspace{0,5cm}

{\it Acknowledgements.

Research  supported by the ANR project "Harmonic Analysis at its boundaries".
ANR-12-BS01-0013-01.
The author benefited from two semesters of D\'el\'egation from the CNRS (2013/2014), France.}

\end{document}